\documentclass[reqno]{amsart}

\usepackage[centertags]{amsmath}
\usepackage[colorlinks]{hyperref}
\usepackage{amsfonts}
\usepackage{MnSymbol}
\usepackage{amsthm}
\usepackage{newlfont}
\usepackage{amscd}
\usepackage{amsmath}
\usepackage{enumerate}
\usepackage[all,2cell]{xy}
\UseAllTwocells
\input xy
\xyoption{2cell}
\xyoption{all}
\usepackage{verbatim}
\usepackage{eucal}
\usepackage{xpatch}
\usepackage{graphicx}



%
\newcommand{\poc}[1][dr]{\save*!/#1+1.6pc/#1:(1,-1)@^{|-}\restore}


\newcommand{\NN}{\mathbb{N}}

\newcommand{\ZZ}{\mathbb{Z}}

\newcommand{\RR}{\mathbb{R}}
\newcommand{\EE}{\mathbb{E}}

\def\C{\mathcal{C}}
\def\D{\mathcal{D}}
\def\G{\mathcal{G}}

\def\F{\mathcal{F}}

\def\M{\mathcal{M}}
\def\N{\mathcal{N}}

\def\I{\mathcal{I}}

\def\T{\mathcal{T}}
\def\P{\mathcal{P}}
\def\Q{\mathcal{Q}}

\renewcommand{\L}{\mathcal{L}}
\def\U{\mathcal{U}}

\def\O{\mathcal{O}}

\def\bS{\mathbf{S}}

\def\rN{\mathrm{N}}

\def\K{\mathcal{K}}
\def\X{\mathcal{X}}


\def\sig{{\sigma}}

\def\Om{{\Omega}}

\def\Sig{{\Sigma}}

\def\vphi{\varphi}


\newcommand{\HSwarrow}{\kern0.05ex\vcenter{\hbox{\Huge\ensuremath{\Swarrow}}}\kern0.05ex}
\newcommand{\hSwarrow}{\kern0.05ex\vcenter{\hbox{\huge\ensuremath{\Swarrow}}}\kern0.05ex}
\newcommand{\LLSwarrow}{\kern0.05ex\vcenter{\hbox{\LARGE\ensuremath{\Swarrow}}}\kern0.05ex}
\newcommand{\LSwarrow}{\kern0.05ex\vcenter{\hbox{\Large\ensuremath{\Swarrow}}}\kern0.05ex}

\newcommand{\HSearrow}{\kern0.05ex\vcenter{\hbox{\Huge\ensuremath{\Searrow}}}\kern0.05ex}
\newcommand{\hSearrow}{\kern0.05ex\vcenter{\hbox{\huge\ensuremath{\Searrow}}}\kern0.05ex}
\newcommand{\LLSearrow}{\kern0.05ex\vcenter{\hbox{\LARGE\ensuremath{\Searrow}}}\kern0.05ex}
\newcommand{\LSearrow}{\kern0.05ex\vcenter{\hbox{\Large\ensuremath{\Searrow}}}\kern0.05ex}

\newcommand{\HDownarrow}{\kern0.05ex\vcenter{\hbox{\Huge\ensuremath{\Downarrow}}}\kern0.05ex}
\newcommand{\hDownarrow}{\kern0.05ex\vcenter{\hbox{\huge\ensuremath{\Downarrow}}}\kern0.05ex}
\newcommand{\LLDownarrow}{\kern0.05ex\vcenter{\hbox{\LARGE\ensuremath{\Downarrow}}}\kern0.05ex}
\newcommand{\LDownarrow}{\kern0.05ex\vcenter{\hbox{\Large\ensuremath{\Downarrow}}}\kern0.05ex}

\newcommand{\HUparrow}{\kern0.05ex\vcenter{\hbox{\Huge\ensuremath{\Uparrow}}}\kern0.05ex}
\newcommand{\hUparrow}{\kern0.05ex\vcenter{\hbox{\huge\ensuremath{\Uparrow}}}\kern0.05ex}
\newcommand{\LLUparrow}{\kern0.05ex\vcenter{\hbox{\LARGE\ensuremath{\Uparrow}}}\kern0.05ex}
\newcommand{\LUparrow}{\kern0.05ex\vcenter{\hbox{\Large\ensuremath{\Uparrow}}}\kern0.05ex}

\newtheorem{thm}{Theorem}[subsection]
\newtheorem{cor}[thm]{Corollary}
\newtheorem{lem}[thm]{Lemma}
\newtheorem{pro}[thm]{Proposition}

\makeatletter
\@addtoreset{thm}{section}
\makeatother

\numberwithin{equation}{subsection}
\numberwithin{thm}{subsection}

\theoremstyle{definition}
\newtheorem{define}[thm]{Definition}

\newtheorem{example}[thm]{Example}
\newtheorem{examples}[thm]{Examples}
\newtheorem{defn}[thm]{Definition}
\newtheorem{cons}[thm]{Construction}

\newtheorem{notn}[thm]{Notation}

\theoremstyle{remark}
\newtheorem{rem}[thm]{Remark}


\DeclareMathOperator{\hocolim}{hocolim}
\DeclareMathOperator{\colim}{colim}

\DeclareMathOperator{\id}{id}
\DeclareFontFamily{OT1}{pzc}{}
\DeclareFontShape{OT1}{pzc}{m}{it}{<-> s * [1.10] pzcmi7t}{}
\DeclareMathAlphabet{\mathpzc}{OT1}{pzc}{m}{it}
\DeclareMathOperator{\cof}{cof}

\DeclareMathOperator{\ad}{ad}

\DeclareMathOperator{\Alg}{Alg}
\DeclareMathOperator{\Ass}{Ass}

\DeclareMathOperator{\op}{op}
\DeclareMathOperator{\Map}{Map}

\DeclareMathOperator{\inj}{inj}
\DeclareMathOperator{\proj}{proj}

\DeclareMathOperator{\Set}{Set}
\DeclareMathOperator{\RelCat}{RelCat}

\DeclareMathOperator{\Cat}{Cat}

\DeclareMathOperator{\Mod}{Mod}

\DeclareMathOperator{\Fun}{Fun}

\DeclareMathOperator{\Ob}{Ob}
\DeclareMathOperator{\Sp}{Sp}
\DeclareMathOperator{\Ho}{Ho}

\DeclareMathOperator{\Op}{Op}
\DeclareMathOperator{\Ab}{Ab}
\DeclareMathOperator{\Env}{Env}

\DeclareMathOperator{\Tw}{Tw}

\DeclareMathOperator{\aug}{aug}
\DeclareMathOperator{\SymSeq}{SymSeq}
\DeclareMathOperator{\Sub}{Sub}
\DeclareMathOperator{\Dec}{Dec}
\DeclareMathOperator{\Aut}{Aut}
\DeclareMathOperator{\coeq}{coeq}

\DeclareMathOperator{\Spectra}{Spectra}
\DeclareMathOperator{\Com}{Com}

\DeclareMathOperator{\Reedy}{Reedy}
\DeclareMathOperator{\Grp}{Grp}
\DeclareMathOperator{\Ring}{Ring}

\def\x{\overset}

\def\Lie{\textrm{Lie\,}}

\def\Hom{\textrm{Hom}}

\def\inv{\textup{inv}}

\def\Dec{\mathrm{Dec}}


\newcommand{\tgpd}{\kern0.05ex\vcenter{\hbox{\footnotesize\ensuremath{2}}}\kern0.05ex\mathcal{G}pd} 



\def\rar{\rightarrow}

\def\lrar{\longrightarrow}

\def\hrar{\hookrightarrow}

\newcommand{\adj}{\mathrel{\substack{\longrightarrow \\[-.6ex] \x{\upvdash}{\longleftarrow}}}}


\def\ovl{\overline}

\def\uline{\underline}



\makeatletter
\def\maketag@@@#1{\hbox{\m@th\normalfont\normalsize#1}}
\makeatother


\title{Tangent categories of algebras over operads}

\author{Yonatan Harpaz}
\email{harpaz@math.univ-paris13.fr}
\address{Institut Galil\'ee \\
Universit\'e Paris 13 \\
99 avenue J.B. Cl\'ement \\
93430 Villetaneuse \\
France}

\author{Joost Nuiten}
\email{j.j.nuiten@uu.nl}
\address{Mathematical Institute\\ Utrecht University\\ P.O. Box 80010\\ 3508 TA Utrecht\\ The
Netherlands.}
\author{Matan Prasma}
\email{mtnprsm@gmail.com}
\address{Faculty of Mathematics\\ University of Regensburg\\ Universitatsstrase 31, 93040\\ Germany.}

\date{}

\begin{document}

\begin{abstract}
Associated to a presentable $\infty$-category $\C$ and an object $X \in \C$ is the tangent $\infty$-category $\T_X\C$, consisting of parameterized spectrum objects over $X$. This gives rise to a cohomology theory, called Quillen cohomology, whose category of coefficients is $\T_X\C$. When $\C$ consists of algebras over a nice $\infty$-operad in a stable $\infty$-category, $\T_X\C$ is equivalent to the $\infty$-category of operadic modules, by work of Basterra--Mandell, Schwede and Lurie. 
In this paper we develop the model-categorical counterpart of this identification and extend it to the case of algebras over an enriched operad, taking values in a model category which is not necessarily stable. This extended comparison can be used, for example, to identify the cotangent complex of enriched categories, an application we take up in a subsequent paper.
\end{abstract}

\maketitle

\tableofcontents

\section{Introduction}\label{s:intro}

A ubiquitous theme in mathematics is the contrast between linear and non-linear structures. In algebraic settings, linear objects such as vector spaces, abelian groups, and modules tend to have a highly structured and accessible theory, while non-linear objects, such as groups, rings, or algebraic varieties are often harder to analyze. 
Non-linear objects often admit interesting \textbf{linear invariants} which are fairly computable and easy to manipulate. Homological algebra then typically enters the picture, extending a given invariant to a collection of derived ones.

To streamline this idea one would like to have a formal framework to understand what linear objects are and how one can ``linearize'' a given non-linear object. One way to do so is the following. Let $\Ab$ denote the category of abelian groups. A locally presentable category $\C$ is called \textbf{additive} if it is tensored over $\Ab$. We note that in this case the tensoring is essentially unique and induces a natural enrichment of $\C$ in $\Ab$. If $\D$ is a locally presentable category then there exists a universal additive category $\Ab(\D)$ receiving a colimit preserving functor $\ZZ:\D \lrar \Ab(\D)$. The category $\Ab(\D)$ can be described explicitly as the category of abelian group objects in $\D$, namely, objects $M \in \D$ equipped with maps $u: \ast_{\D} \lrar M$, $m: M \times M \lrar M$ and $\inv: M \lrar M$ satisfying (diagramatically) all the axioms of an abelian group. We may then identify $\ZZ:\D \lrar \Ab(\D)$ with the functor which sends $A$ to the free abelian group $\ZZ A$ generated from $A$, or the \textbf{abelianization} of $A$.

When studying maps $f: B\lrar A$ one is often interested in linear invariants of $B$ \textbf{over $A$}. A formal procedure to obtain this was developed by Beck in~\cite{Bec67}, where he defined the notion of a \textbf{Beck module} over an object $A$ (say, in a locally presentable category $\D$) to be an abelian group object of the slice category $\D_{/A}$. Simple as it is, this definition turns out to capture many well-known instances of ``linear objects over a fixed object $A$''. For example, if $G$ is a group and $M$ is a $G$-module then the semi-direct product $M \rtimes G$ carries a natural structure of an abelian group object in $\Grp_{/G}$. One can then show that the association $M \mapsto M \rtimes G$ determines an equivalence between the category of $G$-modules and the category of abelian group objects in $\Grp_{/G}$.  
If $\D = \Ring$ is the category of associative unital rings then one may replace the formation of semi-direct products with that of \textbf{square-zero extensions}, yielding an equivalence between the notion of a Beck module over a ring $R$ and the notion of an $R$-bimodule  
(see~\cite{Qui70}). When $R$ is a commutative ring the corresponding notion of a Beck module reduces to the usual notion of an $R$-module. 

In the realm of algebraic topology, one linearizes spaces by evaluating homology theories on them. This approach is closely related to the approach of Beck: indeed, by the classical Dold-Thom theorem one may identify the ordinary homology groups of a space $X$ with the homotopy groups of the free abelian group generated from $X$ (considered, for example, as a simplicial abelian group). The quest for more refined invariants has led to the consideration of generalized homology theories and their classification via homotopy types of \textbf{spectra}. The extension of homological invariants from ordinary homology to generalized homology therefore highlights spectra as a natural extension of the notion of ``linearity'' provided by abelian groups, replacing additivity with \textbf{stability}. This has the favorable consequence that kernels and cokernels of maps become equivalent up to a shift. Using stability as the fundamental form of linearity is also the starting point for the theory of Goodwillie calculus, which extends the notion of stability to give meaningful analogues to higher order approximations, derivatives and Taylor series for functors between $\infty$-categories. 

Replacing the category of abelian groups with the $\infty$-category of spectra means we should replace the notion of an additive category with that of a \textbf{stable $\infty$-category}. 
Thus, instead of associating to a locally presentable category $\D$ the additive category $\Ab(\D)$ of abelian group objects in $\D$, we now associate to a presentable $\infty$-category $\D$ the $\infty$-category $\Sp(\D)$ of \textbf{spectrum objects in $\D$}, which is the universal stable presentable $\infty$-category receiving a colimit preserving functor $\Sig^{\infty}_+:\D \lrar \Sp(\D)$.

The two forms of linearization given by Beck modules and spectra were brought together in~\cite[\S 7.3]{Lur14} under the framework of the \textbf{abstract cotangent complex formalism}.
Given a presentable $\infty$-category $\D$ and an object $A \in \D$, one may define the analogue of a Beck module to be a spectrum object in the slice $\infty$-category $\D_{/A}$. 
As in~\cite{Lur14}, we will refer to $\Sp(\D_{/A})$ as the \textbf{tangent $\infty$-category} at $A$, and denote it by $\T_A\D$. 
These various tangent $\infty$-categories can be assembled into a global object, which is then known as the tangent bundle $\infty$-category $\T\D$.

The cotangent complex formalism allows one to produce cohomological invariants of a given object $A \in \D$ in a universal way. The resulting cohomology groups are known as \textbf{Quillen cohomology groups}, and take their coefficients in the tangent $\infty$-category $\T_A\D$ (see~\cite[\S 2.2]{HNP16} for a more precise comparison with the classical definition of Quillen cohomology via abelianization). In order to study Quillen cohomology effectively one should therefore understand the various tangent $\infty$-categories $\T_A\D$ in reasonably concrete terms. 

One of the main theorems of~\cite[\S 7.3]{Lur14} identifies the tangent $\infty$-categories  $\T_{A}(\Alg_\P(\C))$ of algebras in a presentable stable $\infty$-category $\C$ over a given (unital, coherent) $\infty$-operad $\P$ with the corresponding operadic module $\infty$-categories $\Mod_A(\C)$. Earlier results along these lines were obtained in~\cite{Sch97} and \cite{BM05}.  
For example, the above results identify the tangent $\infty$-category to $\EE_\infty$-ring spectra at a given $\EE_\infty$-ring spectrum $A$ with the $\infty$-category of $A$-module spectra.
This allows one to identify the (abstract) Quillen cohomology of an $\EE_\infty$-ring spectrum with the corresponding topological Andr\'e-Quillen cohomology.

Our main motivation in this paper is to generalize these results to the setting where the operadic algebras take values in an $\infty$-category which is \textbf{not necessarily stable}. In this case, the objects of the tangent $\infty$-categories can be thought of as ``twisted'' modules (see Corollary~\ref{c:sum-comparison-2-into}). 

For various reasons we found it convenient to work in the setting of combinatorial model categories, using~\cite{part1} as our model for stabilization (see \S\ref{s:recall}). Our main result can be formulated as follows (see Corollary~\ref{c:sum-comparison} below).
\begin{thm}[see Corollary~\ref{c:sum-comparison}]\label{t:main-intro}
Let $\M$ be a differentiable combinatorial symmetric monoidal model category, $\P$ a colored symmetric operad in $\M$ and $A$ a $\P$-algebra. Then under suitable technical hypotheses the Quillen adjunction
$$
\xymatrix{
\T_A\Alg_{\P}(\M) \ar@<1ex>[r] & \T_A\Mod_A^{\P}(\M) \ar@<1ex>[l]_-{\downvdash}} 
$$
induced by the free-forgetful adjunction is a Quillen equivalence. 
\end{thm}

\begin{rem}
The role of the technical conditions alluded to in Theorem~\ref{t:main-intro} is mostly to ensure that all the relevant model structures exist and are homotopically sound. They hold, for example, when every object in $\M$ is cofibrant and $\P$ is a cofibrant single-colored operad, or when $\M$ is the category of simplicial sets with the Kan-Quillen model structure and $\P$ is an arbitrary cofibrant colored operad (see Remark~\ref{r:fresse}). When the model structures above do not exist, we can still obtain a similar comparison result for the associated relative categories (see Corollary~\ref{c:sum-comparison-model}).
\end{rem}

The main idea behind Theorem~\ref{t:main-intro} becomes most apparent in the case where $A$ is the initial $\P$-algebra. In this case, $\P$-algebras over $A$ are just augmented $\P$-algebras and $A$-modules are augmented algebras over the sub-operad $\P_{\leq 1}\lrar \P$ containing only the $0$- and $1$-ary operations of $\P$. We can reformulate the key principle behind Theorem \ref{t:main-intro} as follows: the process of stabilization is \textbf{insensitive to algebraic operations of arity $\geq 2$}. This is the content of Theorem \ref{t:comp}, which relies on a skeletal filtration discussed in Section \ref{ss:filtration}.

In fact, this principle also implies the theorem for a general $\P$-algebra $A$, by the following two observations:
\begin{enumerate}[(1)]
\item Since spectrum objects are canonically pointed, the tangent category of $\Alg_{\P}(\M)$ at $A$ is equivalent to the tangent category of the under-category $\Alg_{\P}(\M)_{A/}$, at $A$. Since $A$ is the initial object of $\Alg_{\P}(\M)_{A/}$, the tangent category $\T_A\Alg_{\P}(\M)$ is equivalent to the stabilization of augmented objects in $\P$-algebras under $A$. Similarly, $\T_A\Mod^{\P}_A(\M)$ is equivalent to the stabilization of augmented $A$-modules under $A$.
 
\item $\P$-algebras under $A$ are algebras over the \textbf{enveloping operad} $\P^A$ of $A$, while $A$-modules under $A$ are algebras over the sub-operad $\P^A_{\leq 1}\lrar \P^A$. In other words, by replacing $\P$ by $\P^A$ we reduce to the case where $A$ is the initial algebra, as described above.
\end{enumerate}

Under suitable assumptions, the tangent model category at a given operadic algebra $A$ can therefore be identified with the tangent category to $A$ in the model category of $A$-modules. This latter tangent category can be further simplified into something that resembles a functor category with stable codomain. To make this idea precise it is useful to exploit the global point of view obtained by assembling the various tangent categories into a \textbf{tangent bundle}, using the model categorical machinery of~\cite{part1}. 
The final identification of $\T_A\Alg_\P(\M)$ then takes the following form (see Corollary~\ref{c:sum-comparison-2} below):
\begin{cor}\label{c:sum-comparison-2-into}
Let $\M, \P$ and $A$ be as in Theorem~\ref{t:main-intro}. Then we have a natural Quillen equivalence
$$\xymatrix{
\T_A\Alg_{\P}(\M) \ar@<1ex>[r]^-{\sim} & \Fun^{\M}_{/\M}(\P^A_1,\T\M) \ar@<1ex>[l]_-{\downvdash}\\
}$$
where $\P^A_1$ is the enveloping category of $A$ and $\Fun^{\M}_{/\M}(\P^A_1,\T\M)$ denotes the category of $\M$-enriched lifts
\begin{equation}\label{e:twisted}
\vcenter{\xymatrix@R=1.3pc@C=1.3pc{
& \T\M\ar[d]\\
\P^A_1\ar[r]_{A}\ar@{..>}[ru] & \M
}}
\end{equation}
of the underlying $A$-module $A:\P^A_1 \lrar \M$.
\end{cor}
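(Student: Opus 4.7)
The plan is to combine Theorem~\ref{t:main-intro} with a direct description of the tangent model category on the $A$-module side. Since Theorem~\ref{t:main-intro} already provides a Quillen equivalence $\T_A\Alg_{\P}(\M) \simeq \T_A\Mod_A^{\P}(\M)$, it suffices to produce a natural Quillen equivalence
$$\T_A\Mod_A^{\P}(\M) \simeq \Fun^{\M}_{/\M}(\P^A_1, \T\M).$$

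First, I would replace the $A$-module category by an enriched functor category. The defining property of the enveloping category $\P^A_1$ yields a natural equivalence of $\M$-enriched model categories $\Mod_A^{\P}(\M) \simeq \Fun^{\M}(\P^A_1, \M)$, under which $A$ itself (with its tautological module structure) corresponds to the functor $A: \P^A_1 \to \M$ recording the underlying objects of $A$. Slicing over $A$ then identifies $\bigl(\Mod_A^{\P}(\M)\bigr)_{/A}$ with the category $\Fun^{\M}_{/A}(\P^A_1, \M)$ of enriched functors $F: \P^A_1 \to \M$ equipped with a natural transformation $F \to A$.

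Second, I would commute stabilization past the enriched functor category. The tangent bundle $\T\M$ of~\cite{part1} projects onto $\M$ with fiber at $X \in \M$ the tangent model category $\T_X\M = \Sp(\M_{/X})$, and the universal property of this construction should provide an equivalence
$$\Sp\bigl(\Fun^{\M}_{/A}(\P^A_1, \M)\bigr) \simeq \Fun^{\M}_{/\M}(\P^A_1, \T\M),$$
where a spectrum object on the left corresponds, color by color in $\P^A_1$, to a parametrized spectrum in the fiber $\T_{A(c)}\M$. Composing these identifications with the Quillen equivalence of Theorem~\ref{t:main-intro} then yields the desired description.

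The main obstacle lies in making the second step precise at the level of model categories, rather than just at that of the underlying $\infty$-categories. Concretely, one must verify that the tangent bundle model structure of~\cite{part1}, constructed as a left Bousfield localization of a projective diagram model structure on prespectra in $\M$, is preserved by $\M$-enriched cotensoring with the category $\P^A_1$, and that the resulting stable model structure coincides with the fiberwise stabilization of $\Fun^{\M}_{/A}(\P^A_1, \M)$. I expect this to follow from the general compatibility of projective model structures with diagram categories and with Bousfield localizations, together with the observation that the $\Omega$-spectrum condition can be checked pointwise in $\P^A_1$; here the differentiability hypothesis on $\M$ is crucial, as it ensures that the relevant sequential homotopy colimits commute with the finite homotopy limits appearing in the spectrum structure maps.
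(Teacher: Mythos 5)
Your proposal takes essentially the same approach as the paper: Theorem~\ref{t:main-intro} reduces the claim to modules, $\Mod_A^\P(\M)$ is identified with $\Fun^\M(\P^A_1,\M)$, and the commutation of stabilization with the enriched functor category is supplied in the paper by Proposition~\ref{c:tangent-functor} together with Remark~\ref{r:global-tensored}. One minor misattribution in your sketch: the commutation step is not where differentiability enters — in the paper's proof both $\T\Fun^\M(\I,\M)$ and $\Fun^\M(\I,\T\M)$ are realized as left Bousfield localizations of the common projective model structure on $\Fun^\M(\I\otimes(\NN\times\NN)_+,\M)$, and the locality conditions are seen to coincide by direct inspection (an object is local iff each restriction along $\{i\}\times(\NN\times\NN)$ is an $\Om$-spectrum), with no interchange of homotopy limits and colimits invoked.
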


We note that an enriched functor out of $\P^A_1$ is exactly the notion of an $A$-module. We may hence think of lifts as in~\eqref{e:twisted} as \textbf{twisted modules}. Since the fibers of $\T\M \lrar \M$ are stable these twisted modules are susceptible to the same kind of manipulations as ordinary modules in the stable setting.
 
While Theorem~\ref{t:main-intro} pertains to model categories, it can also be used to obtain results in the $\infty$-categorical setting, using the rectification results of~\cite{PS18} and~\cite{NS}. This is worked out in \S\ref{s:infinity}, where the following $\infty$-categorical analogue of the above result is established (see Theorem~\ref{t:compoo}):
\begin{thm}\label{t:compoo-intro}
Let $\C$ be a closed symmetric monoidal, differentiable presentable $\infty$-category and let $\O^{\otimes} := \rN^{\otimes}(\P)$ be the operadic nerve of a fibrant simplicial operad. Then the forgetful functor induces an equivalence of $\infty$-categories
$$ \T_A\Alg_{\O}(\C) \x{\simeq}{\lrar} \T_{A}\Mod^{\O}_A(\C). $$
\end{thm}
Here $\Mod^{\O}_A(\C)$ is the $\infty$-category of $A$-modules in $\C$, which is closely related to the $\infty$-operad of $A$-modules defined in \cite[\S 3.3]{Lur14} (see Section \ref{s:infinity}). In the special case where $\C$ is stable the conclusion of Theorem~\ref{t:compoo-intro} reduces to the following statement (cf. \cite[Theorem 7.3.4.13]{Lur14}):
\begin{cor}
If, in addition to the above assumptions, $\C$ is stable, then there is an equivalence of $\infty$-categories
$$ \T_A\Alg_{\O}(\C) \x{\simeq}{\lrar} \Mod^{\O}_A(\C) .$$
\end{cor}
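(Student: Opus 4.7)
The plan is to deduce the corollary directly from Theorem~\ref{t:compoo-intro} by showing that its right-hand side, $\T_A\Mod^{\O}_A(\C)$, collapses to $\Mod^{\O}_A(\C)$ itself when $\C$ is stable. Concretely, I would argue in three steps: (i) verify that $\Mod^{\O}_A(\C)$ is stable and presentable, (ii) invoke the general principle that $\T_X\D \simeq \D$ for any stable presentable $\infty$-category $\D$ and any $X \in \D$, and (iii) combine these with Theorem~\ref{t:compoo-intro}.

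For step (i), the forgetful functor $\Mod^{\O}_A(\C) \lrar \prod_{c} \C$ indexed by the colors of $\O$ (or equivalently the restriction along $\P^A_1 \lrar \M$ in the enveloping-category picture) is conservative and preserves all small limits and colimits. Since $\C$ is stable and presentable, stability and presentability of $\Mod^{\O}_A(\C)$ are inherited by this objectwise computation of (co)limits: pushout-pullback squares witnessing stability in $\C$ assemble to ones in $\Mod^{\O}_A(\C)$.

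For step (ii), recall that $\T_X\D := \Sp(\D_{/X}) \simeq \Sp(\D_{X//X})$, passing to the pointed slice by adjoining the terminal object $\id_X$. When $\D$ is stable, the functor
$$\D_{X//X} \lrar \D, \qquad (X \to Y \to X) \longmapsto \fib(Y \to X),$$
is an equivalence, with quasi-inverse sending $M$ to the trivial square-zero extension $X \oplus M \to X$. Hence $\D_{X//X}$ is itself stable, so $\Sp(\D_{X//X}) \simeq \D_{X//X} \simeq \D$, giving $\T_X\D \simeq \D$.

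Applying (ii) to $\D = \Mod^{\O}_A(\C)$ (which is stable by (i)) at the object $A$ gives $\T_A\Mod^{\O}_A(\C) \simeq \Mod^{\O}_A(\C)$, and composing with the equivalence of Theorem~\ref{t:compoo-intro} yields the corollary. There is no real obstacle to this argument: it is a formal consequence of Theorem~\ref{t:compoo-intro} together with standard facts, and the only point requiring a moment of care is the stability of $\Mod^{\O}_A(\C)$, which is immediate from the conservativity and (co)limit-preservation of the forgetful functor.
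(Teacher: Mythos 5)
Your proposal is correct and takes essentially the same route as the paper: the paper likewise deduces the corollary from Theorem~\ref{t:compoo} by observing that $\Mod^{\O}_A(\C)$ is stable (via Remark~\ref{r:stablemodules}, where stability follows because it is a functor category into the stable $\C$, which amounts to the same objectwise-(co)limit observation you make) and then applying Lemma~\ref{l:ker-stable} to get $\T_A\Mod^{\O}_A(\C)\simeq\Mod^{\O}_A(\C)$ via the kernel functor. Your step (ii) is exactly the $\infty$-categorical content of that lemma, so the two arguments coincide.
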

While Theorem~\ref{t:compoo-intro} is only applicable to $\infty$-operads which are nerves of simplicial operads (these are most likely all of them, see~\cite{CHH16},\cite{HHM15}), it does cover $\infty$-operads which are not necessarily unital or coherent, as is assumed in~\cite[Theorem 7.3.4.13]{Lur14}. We also note that the model-categorical statement of Theorem~\ref{t:main-intro} can handle not only simplicial operads, but also \textbf{enriched} operads. 
This allows one, for example, to consider dg-operads such as the Lie or Poisson operad, which do not come from simplicial operads and thus are not covered by~\cite[Theorem 7.3.4.13]{Lur14}. It is likely that Theorem~\ref{t:compoo-intro} could be generalized to the setting of \textbf{enriched $\infty$-operads} of \cite{CH17}, see Remark~\ref{r:rectalg}.

One application of the non-stable comparison theorem is that it allows one to study the tangent $\infty$-categories and Quillen cohomology of enriched $\infty$-categories, an application we take up in~\cite{HNP16} (see also Example~\ref{e:enriched-categories-2}). In particular, if $\C$ is an $\infty$-category, then we identify $\T_\C\Cat_{\infty}$ with the $\infty$-category of functors $\Tw(\C) \lrar \Spectra$ from the twisted arrow $\infty$-category of $\C$ to spectra. Similarly, in~\cite{HNP18} we show that if $\C$ is an $(\infty,2)$-category then $\T_\C\Cat_{(\infty,2)}$ can be identified with the $\infty$-category of functors from a suitable \textbf{twisted $2$-cell $\infty$-category} $\Tw_2(\C)$ of $\C$ to spectra.

\subsection*{Conventions and Notation}
There are various points in the text (notably in \S \ref{s:infinity}) where we make use of the theory of $\infty$-categories as developed by Joyal and Lurie. In particular, by an $\infty$-category we will always mean a \textbf{quasicategory} in the sense of Joyal. For a model category $\M$, we will denote by $\M_\infty$ its associated $\infty$-category, obtained as the $\infty$-categorical localization of $\M$ at the weak equivalences. When $\M$ is a combinatorial model category, $\M_\infty$ is a presentable $\infty$-category by \cite[Proposition 1.3.4.22]{Lur14}. By~\cite[Theorem 2.1]{MG16} (see also~\cite[Proposition 1.5.1]{Hin}), any Quillen adjunction $\F: \M \adj \N: \G$ induces an adjunction on the level of $\infty$-categories, which we will denote by 
$$ \F_\infty: \M_\infty \adj \N_\infty: \G_\infty .$$ 

For any model category $\M$ and any small category $\I$, there is a natural functor of $\infty$-categories $\Fun(\I, \M)\lrar \Fun(\I, \M_\infty)$. When $\M$ is combinatorial, this functor realizes $\Fun(\I, \M_\infty)$ as the localization of $\Fun(\I, \M)$ at the pointwise weak equivalences \cite[Proposition 1.3.4.25]{Lur14}, i.e., every diagram in $\M_\infty$ can be rectified to a diagram in $\M$.  By \cite[Proposition 1.3.4.24]{Lur14}, the homotopy colimit of such a rectified diagram presents the colimit in $\M_\infty$.

\subsection*{Acknowledgements}
We would like to thank the anonymous referee, whose many comments and suggestions have greatly improved the paper.
While working on this paper the first author was supported by the Fondation Sciences Math\'ematiques de Paris. The second author was supported by NWO. The third author was supported by Spinoza, ERC 669655 and SFB 1085 grants. 

\section{Tangent categories and tangent bundles}
In this section we will recall the notions of tangent categories and tangent bundles, as well as their model categorical presentations developed in~\cite{part1}. We will then elaborate further on the particular case of (enriched) functor categories (see \S\ref{s:functor}) and establish some results which will be used in \S\ref{s:tangent}.

\subsection{Stabilization of model categories}\label{s:recall}
Recall that a model category is called \textbf{stable} if its homotopy category is pointed and the induced loop-suspension adjunction $\Sig: \Ho(\M) \adj \Ho(\M): \Om$ is an equivalence of categories (equivalently, the underlying $\infty$-category of $\M$ is stable in the sense of~\cite[\S 1]{Lur14}). Given a model category $\M$, it is natural to try to look for a universal stable model category $\M'$ related to $\M$ via a Quillen adjunction $\M \adj \M'$. When $\M$ is combinatorial the underlying $\infty$-category $\M_{\infty}$ is presentable, in which case a universal stable presentable $\infty$-category $\Sp(\M_{\infty})$ admitting a left adjoint functor from $\M_{\infty}$ indeed exists. When $\M$ is furthermore pointed and left proper there are various ways to realize $\Sp(\M_{\infty})$ as a certain model category of spectrum objects in $\M$ (see~\cite{Hov}). 

However, most of these constructions require $\M$ to come equipped with a point-set model for the suspension-loop adjunction (in the form of a Quillen adjunction), which, to our knowledge, is not readily available in some cases of interest, such as enriched categories (see~\cite{HNP16}). As an alternative, the following model category of spectrum objects was developed in \cite{part1}, based on ideas of Heller (\cite{Hel}) and Lurie (\cite{Lur06}): 
\begin{define}\label{d:Om}
Let $\M$ be a weakly pointed combinatorial model category. We will say that a diagram $X_{\bullet\bullet}: \NN\times\NN\lrar \M$ is a \textbf{pre-spectrum} if $X_{n,m}$ is a weak zero object for every $n \neq m$. We will say that it is an \textbf{$\Om$-spectrum} if it is a pre-spectrum and the squares
\begin{equation}\label{e:diag}
\vcenter{\xymatrix@R=1.3pc@C=1.3pc{
X_{n, n}\ar[r]\ar[d] & X_{n, n+1}\ar[d]\\
X_{n+1, n}\ar[r] & X_{n+1, n+1}
}}
\end{equation}
are homotopy Cartesian for every $n \geq 0$. We will say that a map $f: X_{\bullet\bullet} \lrar Y_{\bullet\bullet}$ is a \textbf{stable weak equivalence} if $\Map^{h}(Y_{\bullet\bullet}, Z_{\bullet\bullet})\lrar \Map^{h}(X_{\bullet\bullet}, Z_{\bullet\bullet})$ is a weak equivalence of simplicial sets for every $\Om$-spectrum $Z_{\bullet\bullet}$, where $\Map^{h}$ is the derived mapping space (computed in either the projective or injective model structure on $\NN\times\NN$-diagrams).
\end{define}

\begin{define}\label{d:Om-2}
Let $\M$ be a weakly pointed combinatorial model category. We let $\Sp(\M)$ denote the left Bousfield localization (when it exists) of the projective model structure on the category of $(\NN\times \NN)$-diagrams in $\M$ whose fibrant objects are the levelwise fibrant $\Om$-spectra. The weak equivalences of this model structure are exactly the stable weak equivalences (Definition~\ref{d:Om}). 
\end{define}
The existence of this left Bousfield localization requires some assumptions on $\M$, for example, being combinatorial and left proper. In this case there is a canonical Quillen adjunction
$$ \Sig^{\infty}: \M \adj \Sp(\M): \Om^{\infty}, $$
where $\Om^{\infty}$ sends an $(\NN \times \NN)$-diagram $X_{\bullet\bullet}$ to $X_{0,0}$ and $\Sig^{\infty}$ sends an object $X$ to the constant $(\NN \times \NN)$-diagram with value $X$. While $\Sig^{\infty}X$ may not resemble the classical notion of a suspension spectrum, it can be replaced by one in an essentially unique way, up to a stable weak equivalence (see~\cite[Remark 2.3.4]{part1}). 

When $\M$ is not pointed one stabilizes $\M$ by first forming its \textbf{pointification} $\M_{\ast} := \M_{\ast/}$, endowed with its induced model structure, 
and then forming the above mentioned model category of spectrum objects in $\M_{\ast}$ (see Remark~\ref{r:sound} for when this construction is homotopically sound). We then denote by $\Sig^{\infty}_+: \M \adj \Sp(\M_{\ast}): \Om^{\infty}_+$ the composition of Quillen adjunctions
$$ \xymatrix{
\Sig^{\infty}_+:\M \ar@<1ex>[r]^-{(-) \coprod \ast} & \M_{\ast} \ar@<1ex>[l]^-{\U} \ar@<1ex>[r]^-{\Sig^{\infty}} & \Sp(\M_{\ast}) \ar@<1ex>[l]^-{\Om^{\infty}}: \Om^{\infty}_+ \\
}.$$ 
When $\M$ is a left proper combinatorial model category and $A\in \M$ is an object, the pointification of $\M_{/A}$  
is given by the (left proper combinatorial) model category $\M_{A//A}:=\left(\M_{/A}\right)_{\id_A/}$ of objects in $\M$ over-under $A$, endowed with its induced model structure. The stabilization of $\M_{/A}$ is then formed by taking the model category of spectrum objects in $\M_{A//A}$ as above. 
\begin{define}
Let $\M$ be a combinatorial model category. As in~\cite{part1}, we will denote the resulting stabilization of $\M_{/A}$, when it exists (e.g., when $\M$ is left proper), by
$$ \T_A\M := \Sp(\M_{A//A}) $$
and refer to its as the \textbf{tangent model category} to $\M$ at $A$.
\end{define}

\begin{rem}\label{r:sound}
Even though the slice/coslice categories of a combinatorial model category $\M$ carry an induced model structure, this will in general not be homotopically sound, in the sense that it yields a model for the corresponding slice/coslice $\infty$-categories. However, the slice model category $\M_{/A}$ will be homotopically sound if $A$ is fibrant or if $\M$ is right proper, and the coslice model category $\M_{A/}$ will be homotopically sound if $A$ is cofibrant or $\M$ is left proper (see \cite[Lemma 3.3.1]{HNP18}).
\end{rem}

By~\cite[Proposition 3.3.2]{part1} the $\infty$-category associated to the model category $\T_A\M$ is equivalent to the \textbf{tangent $\infty$-category} $\T_A(\M_\infty)$ in the sense of~\cite[\S 7.3]{Lur14}, as soon as the slice-coslice model category $\M_{A//A}$ is homotopically sound (see Remark~\ref{r:sound}).

Recall that in the $\infty$-categorical setting, the \textbf{tangent bundle} of an $\infty$-category $\C$ is the coCartesian fibration $\T \C \lrar \C$ classified by the functor $\C \lrar \Cat_{\infty}$ sending $A \in \C$ to $\T_A\C$. Starting from a model category $\M$, it is then useful to have an associated model category $\T\M$ whose underlying $\infty$-category is $\T\M_{\infty}$ and which behaves as much as possible like a family of model categories fibered over $\M$, with fibers the various tangent categories of $\M$.

One of the motivations for using the model of~\cite{part1} is that a simple variation of the construction can be used to give a model for the tangent bundle of $\M$ which enjoys the type of favorable formal properties one might expect. For this one considers the following variant of the category $\NN\times\NN$:
\begin{cons}
Let $(\NN \times \NN)_\ast$ be the category obtained from $\NN \times \NN$ by \textbf{freely adding a zero object}. More precisely, the object set of $(\NN \times \NN)_\ast$ is $\Ob(\NN \times \NN) \cup \{\ast\}$, and we have $\Hom_{(\NN \times \NN)_\ast}((n,m),(k,l)) \cong \Hom_{\NN \times \NN}((n,m),(k,l)) \cup \{\ast\}$ for every $(n,m),(k,l) \in \NN \times \NN$, and $\Hom_{(\NN \times \NN)_\ast}(x,\ast) \cong \Hom_{(\NN \times \NN)_\ast}(\ast,x) \cong \{\ast\}$ for every $x \in (\NN \times \NN)_\ast$.

The poset $\NN \times \NN$ has a natural Reedy structure in which the degree of $(n,m)$ is $n+m$ and all morphisms are ascending. The category $(\NN \times \NN)_\ast$ then inherits a natural Reedy structure where the degree of $\ast$ is $0$ and the degree of $(n,m)$ is $n+m+1$. The ascending maps are those which are either in the image of $\NN \times \NN$ or have $\ast$ as their domain, while the descending maps are those which have $\ast$ as their codomain.
\end{cons}

Given a left proper combinatorial model category $\M$, one now defines $\T\M$ as a certain left Bousfield localization of the Reedy model category $\M^{(\NN \times \NN)_{\ast}}_{\Reedy}$, where a Reedy fibrant object $X \in \M^{(\NN \times \NN)_\ast}$ is fibrant in $\T\M$ if and only if the map $X_{n,m} \lrar X_{\ast}$ is a weak equivalence for every $n \neq m$ and for every $n \geq 0$ the square \eqref{e:diag} is homotopy Cartesian. 
We will refer to the model structure on $\T\M$ as the \textbf{tangent bundle} model structure. One may then show that the projection $\T\M \lrar \M$ is both a left and a right Quillen functor and exhibits $\T\M$ as a \textbf{relative model category} over $\M$, in the sense of~\cite{HP}, whose fibers over fibrant objects $A \in \M$ can be identified with the corresponding tangent categories $\T_A\M$. Furthermore, the underlying map of $\infty$-categories $\T\M_\infty \lrar \M_{\infty}$ exhibits $\T\M_{\infty}$ as the tangent bundle of $\M_\infty$.

\subsection{Tangent bundles of functor categories}\label{s:functor}
Let $\M$ be a left proper combinatorial model category tensored over a symmetric monoidal tractable model category $\bS$, and let $\I$ be a small $\bS$-enriched category whose mapping objects are cofibrant. In this case, the \textbf{enriched functor category} $\Fun^{\bS}(\I,\M)$ carries the associated projective model structure. Our goal in this section is to describe the tangent categories and tangent bundle of $\Fun^{\bS}(\I,\M)$. This will be useful in describing the stabilization of module categories in \S\ref{s:tangent}.

By~\cite[Corollary 3.2.2]{part1} the model category $\T\M$ inherits a natural $\bS$-enrichment, and we may hence consider the category $\Fun^{\bS}(\I,\T\M)$ of $\bS$-enriched functors $\I \lrar \T\M$. Since $\M$ is combinatorial, $\T\M$ is combinatorial as well and we may consequently endow $\Fun^{\bS}(\I,\T\M)$ with the projective model structure.
We then have the following proposition:
\begin{pro}\label{c:tangent-functor}
The natural equivalence of categories $\Fun^{\bS}(\I,\M)^{(\NN \times \NN)_{\ast}} \simeq \Fun^{\bS}(\I,\M^{(\NN \times \NN)_{\ast}})$ identifies the model structures
\begin{equation}\label{e:tangent}
\T(\Fun^{\bS}(\I,\M)^{\proj}) \simeq \Fun^{\bS}(\I,\T\M)^{\proj}.
\end{equation}
\end{pro}
\begin{proof}
The tangent bundle model structure on the left hand side of~\eqref{e:tangent} is a left Bousfield localization of the Reedy-over-projective model structure on $\Fun^{\bS}(\I,\M)^{(\NN \times \NN)_{\ast}}$. Similarly, the right hand side is a left Bousfield localization of the projective-over-Reedy model structure on $\Fun^{\bS}(\I,\M^{(\NN \times \NN)_{\ast}})$. It is not hard to verify that the equivalence $\Fun^{\bS}(\I,\M)^{(\NN \times \NN)_{\ast}} \simeq \Fun^{\bS}(\I,\M^{(\NN \times \NN)_{\ast}})$ identifies the Reedy-over-projective model structure with the projective-over-Reedy model structure.
Under this identification the two left Bousfield localizations coincide. Indeed, a levelwise fibrant enriched functor $\F: \I \otimes (\NN \times \NN)_\ast \lrar \M$ 
(where $\otimes$ is inherited from the tensoring of $\bS$ over sets)
is local in either the left or the right hand side of~\eqref{e:tangent} if and only if for every $i \in \I$ the restriction $\F|_{{i} \times (\NN \times \NN)}$ is an $\Om$-spectrum object of $\M_{\F(i,\ast)//\F(i,\ast)}$ (see Definition~\ref{d:Om}).
\end{proof}

\begin{rem}\label{r:global-tensored}
Let $\F: \I \lrar \M$ be a projectively fibrant $\bS$-enriched functor. Since 
$\T\Fun^{\bS}(\I,\M) \lrar \Fun^{\bS}(\I,\M)$
is a relative model category (see~\cite{part1}) the fiber $(\T\Fun^{\bS}(\I,\M))_\F$ inherits a model structure, which coincides in this case with $\T_\F\Fun^{\bS}(\I,\M)$. Because \eqref{e:tangent} is an equivalence of (co)Cartesian fibrations over $\Fun^{\bS}(\I,\M)$, we obtain an equivalence of categories
\begin{equation}\label{e:equiv-functor}
\T_\F\Fun^{\bS}(\I,\M) \simeq \Sp(\Fun^{\bS}(\I,\M)_{\F//\F}) \x{\simeq}{\lrar} \Fun^{\bS}_{/\M}(\I,\T\M) \simeq (\Fun^{\bS}(\I,\T\M))_{\F},
\end{equation}
where $\Fun^{\bS}_{/\M}(\I,\T\M)$ denotes the category of $\bS$-enriched lifts
$$ \xymatrix@R=1.3pc@C=1.3pc{
& \T\M \ar^{\pi}[d] \\
\I \ar@{-->}[ur] \ar_-{\F}[r] & \M. \\
}$$
By transport of structure one obtains a model structure on $\Fun^{\bS}_{/\M}(\I,\T\M)$, which coincides in this case with the corresponding projective model structure (i.e., where weak equivalences and fibrations are defined objectwise). 
\end{rem}

When $\M$ is furthermore stable the situation becomes even simpler. Indeed, in this case $\Fun^{\bS}(\I,\M)$ is stable and is Quillen equivalent to both sides of~\eqref{e:equiv-functor} under mild assumptions. This follows from~\cite[Corollary 3.3.3]{part1} and the following lemma:
\begin{lem}\label{l:ker-stable}
Let $\M$ be a stable model category equipped with a strict zero object $0 \in \M$ and let $A \in \M$ be an object. Assume that either $A$ is cofibrant or $\M$ is left proper and that either $A$ is fibrant or $\M$ is right proper. Then the Quillen adjunction
\begin{equation}\label{e:ker}
(-) \coprod A: \M \adj \M_{A//A}: \ker
\end{equation}
is a Quillen equivalence. In particular, in this case $\M_{A//A}$ is stable.
\end{lem}
\begin{proof}
The functor $\ker$ sends an object $A \lrar C \x{p}{\lrar} A$ over-under $A$ to the object $\ker(p) \cong C \times_A 0$, while its left adjoint sends an object $B$ to the object $A \lrar B \coprod A \lrar A$, where the first map is the inclusion of the second factor and the second map restricts to the identity on $A$ and to the $0$-map on $B$. We then see that~\eqref{e:ker} is indeed a Quillen adjunction.

Let $B \in \M$ be a cofibrant object and $A \lrar C \x{p}{\lrar} A$ a fibrant object of $\M_{A//A}$. We have to show that a map $f:B \coprod A\lrar C$ over-under $A$ is a weak equivalence if and only if the adjoint map $f^{\ad}:B\lrar C \times_A 0$ is a weak equivalence. These two maps fit into a diagram in $\M$ of the form
$$\xymatrix@R=1.3pc@C=1.3pc{
0\ar[d]\ar[r] & B\ar^{f^{\ad}}[r]\ar[d] & C \times_A 0\ar[r]\ar[d] & 0\ar[d]\\
A\ar[r] & B \coprod A \ar^{f}[r] & C\ar^{p}[r] & A,
}$$
where the left square is coCartesian and the right square is Cartesian. Under the assumption that $A$ is cofibrant or $\M$ is left proper the left square is homotopy coCartesian. Under the assumption that $A$ is fibrant or $\M$ is right proper the right square is homotopy Cartesian. Since the external rectangle is clearly homotopy Cartesian and coCartesian and since $\M$ is stable, it follows from~\cite[Remark 2.1.4]{part1} and the pasting lemma for homotopy (co)Cartesian squares that all squares in this diagram are homotopy Cartesian and coCartesian. This means in particular that the top middle horizontal map is a weak equivalence iff the bottom middle horizontal map is one.
\end{proof}

\begin{cor}\label{c:stable-cotangent}
Let $\M$ be a proper combinatorial strictly pointed stable model category. Then the right Quillen functors
\begin{equation}\label{e:tangent-stable}
\Sp(\M_{A//A}) \x{\Om^{\infty}}{\lrar} \M_{A//A} \x{\ker}{\lrar} \M
\end{equation}
are both right Quillen equivalences, and for every cofibrant object $B \lrar A$ in $\M_{/A}$ the image of $\Sig^{\infty}_+(B)$ under the composed Quillen equivalence ~\eqref{e:tangent-stable} is naturally equivalent to $B$ itself.
\end{cor}

\begin{cor}\label{c:stable-functor-tangent}
Let $\M$ be a proper combinatorial strictly pointed stable model category tensored over a tractable SM model category $\bS$. Then for every $\bS$-enriched functor $\F: \I \lrar \M$ the tangent model category $\T_\F\Fun^{\bS}(\I,\M)$ is Quillen equivalent to $\Fun^{\bS}(\I,\M)$.
\end{cor}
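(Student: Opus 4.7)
My plan is to chain together the identifications established earlier in this section. By Remark \ref{r:global-tensored}, the tangent category $\T_\F \Fun^{\bS}(\I,\M)$ may be identified, as a model category, with the projective model category $\Fun^{\bS}_{/\M}(\I, \T\M)$ of $\bS$-enriched lifts of $\F: \I \lrar \M$ to $\T\M$. So it suffices to identify the latter with $\Fun^{\bS}(\I, \M)$.

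By the immediately preceding corollary, we have a Quillen equivalence $\M \times \Sp(\M) \adj \T\M$ sitting over $\M$ (where the first coordinate provides the map to $\M$ on the left), and since $\M$ is stable $\Sp(\M)$ is Quillen equivalent to $\M$. Transporting this equivalence pointwise to enriched lift categories should yield a Quillen equivalence
$$\Fun^{\bS}_{/\M}(\I, \M \times \M) \adj \Fun^{\bS}_{/\M}(\I, \T\M).$$
The left hand side is naturally isomorphic to $\Fun^{\bS}(\I, \M)$: a lift of $\F$ along the first projection $\M \times \M \lrar \M$ is simply a pair $(\F, \G)$ with $\G: \I \lrar \M$ an arbitrary enriched functor, and under this identification the projective model structure on the lift category agrees with the projective model structure on $\Fun^{\bS}(\I, \M)$. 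Composing these equivalences yields the claimed Quillen equivalence $\T_\F\Fun^{\bS}(\I,\M) \simeq \Fun^{\bS}(\I,\M)$.

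The main obstacle is verifying that the Quillen equivalence $\M \times \Sp(\M) \adj \T\M$ over $\M$ really does induce a Quillen equivalence on the corresponding categories of enriched lifts of $\F$. This ought to follow essentially formally: the weak equivalences and fibrations in the projective model structure on $\Fun^{\bS}_{/\M}(\I, -)$ are detected pointwise in the fibers of $(-) \lrar \M$ over each object $i \in \I$, and by Lemma \ref{l:ker-stable} together with \cite[Theorem 4.1.3]{HP}, the adjunction restricts fiberwise to a Quillen equivalence at each fibrant value $\F(i) \in \M$. Some care may be needed if $\F$ is not pointwise fibrant, but this can be handled by replacing $\F$ up to levelwise weak equivalence with a projectively fibrant functor, which does not change the homotopy theory of either lift category.
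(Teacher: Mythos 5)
Your route differs from the paper's and is left incomplete at its key step. The paper's intended proof (signaled by the paragraph preceding Lemma~\ref{l:ker-stable}) is direct and stays inside the functor category: the projective model category $\N := \Fun^{\bS}(\I,\M)$ is itself combinatorial, strictly pointed and stable, so Lemma~\ref{l:ker-stable} applies verbatim with $\N$ in place of $\M$ and $\F$ in place of $A$ and gives a Quillen equivalence $\N \simeq \N_{\F//\F}$; then \cite[Corollary 3.3.3]{part1} (applied to the stable $\N_{\F//\F}$) gives $\T_\F\N = \Sp(\N_{\F//\F}) \simeq \N_{\F//\F}$, and chaining the two yields $\T_\F\N \simeq \N$.

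You instead apply these lemmas at the level of $\M$ (via the preceding unnamed corollary, $\M\times\Sp(\M) \adj \T\M$) and then try to transport the resulting Quillen equivalence through Remark~\ref{r:global-tensored} to enriched lift categories. The reduction $\Fun^{\bS}_{/\M}(\I,\M\times\M) \cong \Fun^{\bS}(\I,\M)$ is correct and clean, but the transport step --- which you flag and then wave off as something that ``ought to follow essentially formally'' --- is actually the substance of the proof in your approach and is not formal. Two things must be checked: (a) that the adjunction $\M\times\M \rightleftarrows \T\M$ is compatible with the $\bS$-tensoring, so that it induces functors on $\bS$-enriched lift categories at all; and (b) that an adjunction of relative model categories over $\M$ which is a fiberwise Quillen equivalence induces a Quillen equivalence on the projective model structures on lift categories. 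Point (b) is not automatic: projective cofibrancy of a lift $\G$ need not give fiberwise cofibrancy of each $\G(i)$, so the derived unit cannot simply be tested fiberwise without a further argument (for instance, that the left adjoint preserves all weak equivalences, or that $\bS$ and $\I$ satisfy conditions under which projective cofibrant implies pointwise cofibrant). Your worry about pointwise fibrancy of $\F$ is actually the less serious issue, since properness of $\M$ already takes care of that in Lemma~\ref{l:ker-stable}; the unaddressed transport-of-Quillen-equivalence step is the real gap, and the paper's route avoids it entirely.
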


\section{Colored operads}
In this section we will recall the notion of a colored symmetric operad and review some of its basic properties. The main technical tool we will need is a suitable natural filtration on free algebras (see \S\ref{ss:filtration}) which plays a key role in the proof of the independence of stabilization on operations of arity $\geq 2$, discussed in \S\ref{s:main}. While this filtration has been studied before by several authors, for our purposes we need a somewhat more specific formulation in which the filtration is directly associated to a certain skeletal filtration on the operad in question. 

\subsection{Preliminaries}\label{ss:prelim}
Throughout this section, let $\M$ be a symmetric monoidal (SM) locally presentable category in which the tensor product distributes over colimits. 

\begin{defn}
Let $\Sigma$ be the groupoid of finite sets and bijections between them. We will use the term $\uline{n}$ to denote a generic set of size $n$.  In particular, the automorphism group $\Aut(\uline{n})$ can be identified with the symmetric group on $n$ elements. For every $\uline{n}$ we will denote by $\uline{n}_+ := \uline{n} \coprod \{\ast\}$. 
\end{defn}

We consider the association $\uline{n} \mapsto \uline{n}_+$ as a functor $(-)_+: \Sig \lrar \Set$. For a set $W$ we will denote by $\Sig_W := \Sig \times_{\Set} \Set_{/W}$ the comma category associated to $(-)_+$. More explicitly, we may identify objects of $\Sig_W$ with pairs $(\uline{n},\ovl{w})$ where $\uline{n}$ is an object of $\Sig$ and $\ovl{w}: \uline{n}_+ \lrar W$ is a map of sets. We think of $\ovl{w}$ as a vector of elements of $W$ indexed by $\uline{n}_+$ 
and refer to $n$ as the \textbf{arity} of the object $\ovl{w}$. We will generally abuse notation and refer to the object $(\uline{n},\ovl{w})$ simply by $\ovl{w}$, suppressing the explicit reference to the arity. 

\begin{rem}\label{r:autw}
The category $\Sig_W$ is a groupoid. 
If $\ovl{w} \in \Sig_W$ has arity $n$ then the automorphism group $\Aut(\ovl{w})$ of $\ovl{w}$ in $\Sig_W$ can be identified with the subgroup of $\Aut(\uline{n})$ consisting of those permutations $\sig:\uline{n} \lrar \uline{n}$ such that $\ovl{w} \circ \sig = \ovl{w}$.

Note that $\Sig_W$ is equivalent to the groupoid whose objects are tuples of elements $(w_1, \dots, w_n, w_\ast)$ in $W$ with $n\geq 0$, where every element of the symmetric group $\sigma\in \Sigma_n$ defines a morphism $\sigma: (w_{\sigma(1)}, \dots, w_{\sigma(n)}, w_\ast)\lrar (w_1, \dots , w_n, w_\ast)$.
\end{rem}

\begin{define}
A \textbf{$W$-symmetric sequence in $\M$} is a functor $X:\Sig_W \lrar \M$. 
We will denote by $\SymSeq_W(\M)$ the category of $W$-symmetric sequences in $\M$. When the category $\M$ is fixed we will often abuse notation and denote $\SymSeq_W(\M)$ simply by $\SymSeq_W$.
\end{define}
The category $\SymSeq_W(\M)$ admits a (non-symmetric) monoidal product known as the \textbf{composition product}. Let us recall the details.

\begin{cons}\label{c:dec}
Let $\text{Ar}$ be the groupoid whose objects are (not necessarily bijective) arrows of finite sets $\phi: \uline{k}\lrar \uline{n}$ and whose morphisms are natural bijections between such maps. 
We will denote by $\Dec_W := \text{Ar}\times_{\Set} \Set_{/W}$ the comma category associated to the functor $\text{sum}_+: \text{Ar}\lrar \Set$ which sends $\phi: \uline{k}\lrar \uline{n}$ to $(\uline{k} \coprod \uline{n})_+$. Explicitly, the objects of $\Dec_W$ are given by tuples $(\phi, \ovl{v})$ consisting of a map of finite sets $\phi: \uline{k}\lrar \uline{n}$ and a map $\ovl{v}: (\uline{k} \coprod \uline{n})_+ \lrar W$. 
\end{cons}
\begin{rem}
An object of $(\uline{n}, \ovl{w})\in\Sig_W$ can be thought of as encoding the domain $(w_1, \dots, w_n)$ and codomain $w_\ast$ of a potential $n$-ary operation. Similarly, we think of an object $(\phi:\uline{k}\lrar \uline{n}, \ovl{v})$ of $\Dec_W$ as describing a potential \textbf{decomposition} of a $k$-ary operation, in the following sense: the restriction $\ovl{v}|_{\uline{k}_+}$ describes a $k$-ary operation $(v_j)_{j\in \uline{k}}\lrar v_\ast$, which comes with a decomposition into a collection of operations $(v_j)_{j\in \phi^{-1}(i)}\lrar v_i$ for each $i\in \uline{n}$, followed by an $n$-ary operation $(v_i)_{i\in \uline{n}}\lrar v_\ast$.
\end{rem}
Given an object $(\phi:\uline{k}\lrar \uline{n}, \ovl{v}) \in \Dec_W$, we will denote by $\phi_*: \uline{k} \coprod \uline{n} \lrar \uline{n}$ the map which restricts to $\phi$ on $\uline{k}$ and to the identity on $\uline{n}$. For every $i \in \uline{n}$ we will consider the inverse image $\phi_*^{-1}(i) \cong \phi^{-1}(i) \cup \{i\}$ as a pointed set with base point $i$. 
We may then consider $(\phi_*^{-1}(i),\ovl{v}|_{\phi_*^{-1}(i)})$ as an object of $\Sig_W$ (which we will just refer to as $\ovl{v}|_{\phi_*^{-1}(i)}$, following our convention above).

\begin{define}\label{d:comp}
For two $W$-symmetric sequences $X$ and $Y$ we define $X\boxtimes Y: \Dec_W\lrar \M$ by
$$ (X\boxtimes Y)\Big(\phi: \uline{k}\lrar \uline{n}, \ovl{v}\Big) = X(\ovl{v}|_{\uline{n}_+})\otimes \Big(\displaystyle\mathop{\bigotimes}_{i \in \uline{n}} Y\Big(\ovl{v}|_{\phi_*^{-1}(i)}\Big)\Big). $$
We then define the \textbf{composition product} of $X$ and $Y$ to be the left Kan extension
$
X\circ Y := \pi_!(X\boxtimes Y)
$
of $X \boxtimes Y$ along the functor $\pi: \Dec_W \lrar \Sig_W$ given by $\pi(\phi,\ovl{v}) = \ovl{v}|_{\uline{k}_+}$. 
\end{define}

Explicitly, for $\ovl{w}$ of arity $k$, the composition product is given by the formula
\begin{equation}\label{e:composition}
(X \circ Y)(\ovl{w}) \cong \coprod_{[(\phi,\ovl{v})]} \Big[X(\ovl{v}|_{\uline{n}_+}) \otimes \Big(\displaystyle\mathop{\bigotimes}_{i \in \uline{n}} Y\Big(\ovl{v}|_{\phi_*^{-1}(i)}\Big)\Big) \Big]\otimes_{\Aut(\phi,\ovl{v})} \Aut(\ovl{w}), 
\end{equation}
where the coproduct runs over all isomorphism classes of objects $(\phi: \uline{k}\lrar \uline{n},\ovl{v}: (\uline{k} \coprod \uline{n})_+ \lrar W) \in \Dec_W$ such that $\ovl{v}|_{\uline{k}_+} = \ovl{w}$, while $\Aut(\phi,v)$ is the automorphism group $(\phi,v)$ in $\Dec_W$. We refer the reader to~\cite[\S 3]{PS18} for more details on the composition product (which is called the ``substitution product'' in loc. cit.).

\begin{rem}\label{r:semi-linear}
The composition product determines a (non-symmetric) monoidal structure on $\SymSeq_W$ whose unit is the symmetric sequence $1_{\SymSeq_W}$ such that $1_{\SymSeq}(\ovl{w})$ is the unit of $\M$ when $\ovl{w}$ is constant and of arity $1$ and is initial otherwise. We note that the composition product preserves colimits in the left entry (see~\cite[Proposition 3.6]{PS18}), but generally not in the right. However, as can be seen by inspecting the formula in~\eqref{e:composition}, for a fixed $X \in \SymSeq_W$ which is concentrated in arity $1$, the functor $X \circ (-)$ does preserve colimits. 
\end{rem}

\begin{define} 
A \textbf{$W$-colored (symmetric) operad} $\P$ is a monoid object in $\SymSeq_W(\M)$ with respect to the composition product described above. We will usually not mention the term ``symmetric'' explicitly when discussing such operads, and will omit the term ``$W$-colored'' whenever $W$ is clear in the context. We will denote by $\Op_W(\M)$ the category of $W$-colored operads in $\M$.
\end{define}
Explicitly, a $W$-colored operad $\P$ consists of objects 
$$
\P(\ovl{w})=\P\big((w_i)_{i\in \uline{n}}; w_\ast\big)
$$
that parametrize the \textbf{$n$-ary operations from $(w_i)_{i \in \uline{n}}$ to $w_{\ast}$}. For every $\phi:\uline{k} \lrar \uline{n}$ and $\ovl{v}: (\uline{k} \coprod \uline{n})_+ \lrar W$ as above, there is a composition operation
$$ \P\big((v_i)_{i\in \uline{n}}; v_\ast\big)\otimes \Big(\displaystyle\mathop{\bigotimes}_{i \in \uline{n}} \P\Big((v_j)_{j\in \phi^{-1}(i)}; v_i\Big)\Big) \lrar \P\big((v_j)_{j\in \uline{k}}; v_\ast\big) ,$$
subject to the natural equivariance, associativity and unitality conditions.

\begin{define}\label{d:mod}
Let $\P$ be a $W$-colored operad in $\M$. A left (resp. right) \textbf{module} over $\P$ is a $W$-symmetric sequence in $\M$ which is a left (resp. right) module over $\P$ with respect to the composition product above. A \textbf{$\P$-algebra} is a left $\P$-module $A \in \SymSeq_W(\M)$ which is concentrated in arity $0$, i.e., such that $A(\ovl{w}) \cong \emptyset_\M$ whenever $\ovl{w}$ is of arity $n > 0$.
\end{define}
Explicitly, a $\P$-algebra is given by an object $A\in \M^W$, together with maps
$$ \P(\ovl{w})\otimes A(w_1)\otimes ... \otimes A(w_n) \lrar A(w_{\ast}), $$
subject to the natural equivariance, associativity and unitality constraints. We denote by $\Alg_{\P}(\M)$ the category of $\P$-algebras and algebra maps. When there is no possibility of confusion we will also denote $\Alg_\P(\M)$ simply by $\Alg_{\P}$. 
\begin{examples}\label{e:basic-1} We have the following basic examples of operads in sets. For any SM category $\M$ as above, they can also be interpreted as operads in $\M$ using the canonical tensoring of $\M$ over sets.
\begin{enumerate}[(1)]
\item\label{e:basic-1mcom}
Commutative algebras are algebras over the \textbf{commutative operad} $\Com$. Similarly, there is an operad $\M\Com$ on two colors $W = \{a,m\}$ whose algebras are pairs of a commutative algebra and a module over it. The sets of $n$-ary operations $(w_i)_{i \in \uline{n}} \lrar w_\ast$ is a singleton if either $w_\ast=w_i=a$ for all $i$ or if $w_\ast = m$ and $w_i=m$ for exactly one $i\in\uline{n}$, and is empty otherwise. 

\item\label{e:basic-1cat}
Let $O$ be a set.  
Then there is an $O \times O$-colored operad $\P_{O}$ whose algebras are the categories with object set $O$. Explicitly, $\P_{O}$ is the symmetrization of the non-symmetric operad whose $n$-ary operations are as follows: for $n \geq 1$ the object of $n$-ary operations from $(x_1,y_1),(x_2,y_2),...,(x_n,y_n)$ to $(x_\ast,y_\ast)$ is a singleton if $x_\ast=x_1,y_\ast=y_n$ and $y_i = x_{i+1}$ for $i=1,...,n-1$, and is empty otherwise. For $n=0$ the object of $0$-ary operations into $(x_\ast,y_\ast)$ is a singleton if $x_\ast=y_\ast$ and is empty otherwise. For any SM category $\M$, $\P_O$-algebras in $\M$ are $\M$-enriched categories with object set $O$. In particular, when $O=*$ the operad $\P_O$ is the \textbf{associative operad}.
\end{enumerate}
\end{examples}
\begin{examples}
In additive categories, there a various other types of algebraic structures that arise as algebras over operads in abelian groups, which need not come from operads in sets. 
\begin{enumerate}[(1)]
\item Lie algebras are algebras over the \textbf{Lie operad} $\Lie$. This is the smallest sub-operad $\Lie\subseteq \Ass$ of the associative operad (in abelian groups) that contains the commutator element $\id-(12)\in \ZZ[\Sigma_2]=\Ass(2)$.
\item Poisson algebras (i.e., commutative algebras with a Lie bracket satisfying the Leibniz rule) are algebras over the \textbf{Poisson operad}.
\end{enumerate}
Similarly, $n$-shifted Poisson algebras are algebras over the $n$-shifted Poisson operad in chain complexes.
\end{examples}

Let $j_n: \Sigma_W^n\lrar \Sigma_W$ and $t_n: \Sigma_W^{\leq n}\lrar \Sigma_W$ be the inclusions of the full subgroupoids consisting of objects of arity $n$ and objects of arity $\leq n$, respectively.

\begin{define}\label{d:skeleta}
Let $\P$ be a $W$-colored symmetric sequence in $\M$. We define the \textbf{arity $n$ part} of $\P$ to be the $W$-symmetric sequence $\P_n:= (j_n)_! j_n^*\P$ and the \textbf{$n$-skeleton} of $\P$ to be the $W$-symmetric sequence $\P_{\leq n}:=(t_n)_! t_n^*\P$, where we use $(-)^*$ to indicate restriction and $(-)_!$ to denote left Kan extension.   
When $n=0$, we denote by $\P^+_{0}$ the free $W$-colored operad generated from the $W$-symmetric sequence $\P_{0} \cong \P_{\leq 0}$. We will say that $\P$ is \textbf{concentrated in arity $n$} if the natural map $\P_n \lrar \P$ is an isomorphism.
\end{define}

Explicitly, the symmetric sequence $\P_n$ (resp.\ $\P_{\leq n}$) is given by $\P_{n}(\ovl{w}) \cong \P(\ovl{w})$ for $\ovl{w}$ of arity $n$ (resp.\ arity $\leq n$) and $\P_{n}(\ovl{w}) \cong \emptyset$ for $\ovl{w}$ of arity $\neq n$ (resp.\ arity $>n$). The operad $\P^+_{0}$ has no non-trivial $m$-ary operations for $m >1$ (i.e., the corresponding objects of $m$-ary operations are all initial), while $\P^+_{0}(\ovl{w}) \cong \P_{0}(\ovl{w})$ for $\ovl{w}$ of arity $0$ and its $1$-ary operations are only identity maps. 

\begin{rem}\label{r:filtration-converge}
The skeleton inclusion maps $\P_{\leq n} \lrar \P$ assemble into a map
$$ \colim_n \P_{\leq n} \lrar \P .$$
This map is an isomorphism: indeed, for every $\ovl{w} \in \Sig_W$ the filtration $\{\P_{\leq n}(\ovl{w})\}_{n \in \NN}$ stabilizes after finitely many steps.
\end{rem}

\begin{rem}\label{r:bimodule}
Let $\P$ be a $W$-colored operad. Then $\P_{\leq 1}$ and $\P_1$ inherit from $\P$ a natural operad structure. Furthermore, $\P_n$ inherits from $\P$ the structure of a $\P_1$-bimodule and $\P_{\leq n}$ inherits from $\P$ the structure of a $\P_{\leq 1}$-bimodule. Similarly, $\P_0 \cong \P_{\leq 0}$ inherits from $\P$ the structure of a $\P$-bimodule, and is in particular a $\P$-algebra. As such, it is the \textbf{initial $\P$-algebra}. 
\end{rem}
\begin{rem}\label{e:p1}
A $W$-colored operad in $\M$ with only 1-ary operations is precisely an $\M$-enriched category with $W$ as its set of objects.  
Consequently, if $\P$ is an operad in $\M$ then we will often consider $\P_1$ as an $\M$-enriched category, and will refer to it as the \textbf{underlying category} of $\P$. When $\P$ is an $\M$-enriched category (i.e., when $\P \cong \P_1$), a $\P$-algebra is simply an enriched functor $\P\lrar \M$.
\end{rem}
\begin{define}
An \textbf{augmented $\P$-algebra} in $\M$ is a $\P$-algebra $A$ equipped with a map of $\P$-algebras $A \lrar \P_0$, where $\P_0$ is considered as the initial $\P$-algebra. We will denote by $\Alg_{\P}^{\aug} := (\Alg_{\P})_{/\P_0}$ the category of augmented $\P$-algebras. We note that by construction the category $\Alg_{\P}^{\aug}$ is pointed. 
\end{define}

Every morphism of $W$-colored operads $f:\P\lrar \Q$ induces an extension-restriction adjunction 
$$\xymatrix{f_!:\Alg_{\P}\ar@<1ex>[r] & \Alg_{\Q}:f^*\ar[l]<1ex>_{\upvdash}.}$$
Let $\int_{\P\in\Op_W}\Alg_{\P}\lrar \Op_W$ be the Grothendieck construction of the functor $\P\mapsto \Alg_{\P}$ and $f\mapsto f_!$. As in~\cite[Definition 1.5]{BM09}, one may consider the section 
$$\Op_W\lrar \int_{\P\in\Op_W}\Alg_{\P},$$ 
sending a $W$-colored operad $\P$ to the pair $(\P,\P_0)$ consisting of $\P$ and its initial $\P$-algebra. This functor admits a left adjoint 
$$\Env:\int_{\P\in\Op_W}\Alg_{\P}\lrar \Op_W, $$
associating to a pair $(\P,A)$ of an operad $\P$ and a $\P$-algebra $A$ a new operad $\P^A := \Env(\P,A) \in \Op_W$. Following~\cite{BM09} we will refer to $\P^A$ as the \textbf{enveloping operad} of $A$, and refer to the $\M$-enriched category $\P^A_1$ as the \textbf{enveloping category} of $A$. The category of algebras over $\P^A$ is equivalent to the category $(\Alg_{\P})_{A/}$ of $\P$-algebras under $A$ (see~\cite[Proposition 4.4(iv)]{PS18}). When $A\cong\P_0$ is the initial $\P$-algebra the natural map $\P\lrar \P^A$ is an isomorphism (\cite[Proposition 4.4(i)]{PS18}).

\begin{defn}\label{d:module}
Let $\P$ be an operad and $A$ a $\P$-algebra. An \textbf{$A$-module} is an algebra over $\P^A_1$, i.e., an $\M$-enriched functor from the enveloping category of $A$ to $\M$. We will denote the category of $A$-modules in $\M$ by $\Mod_A^{\P}(\M)$, or simply by $\Mod^{\P}_A$ when there is no possibility of confusion.
\end{defn}
Unwinding the definition, one finds that a module over a $\P$-algebra $A$ is given by an object $M \in \M^W$ together with action maps
$$\xymatrix{
\P(\ovl{w})\otimes \Big(\displaystyle\mathop{\bigotimes}_{i \in \uline{n} \setminus \{j\}} A(w_i)\Big)\otimes M(w_j)\ar[r] & M(w_{\ast}) 
}$$ 
for every $j \in \uline{n}$, subject to natural equivariance, associativity and unitality conditions (cf. \cite[Definition 1.1]{BM09} for the 1-colored case).

\begin{example}\label{e:basic-2}
For a commutative algebra the notion of an operadic module coincides with that of an ordinary module, while for associative algebras we obtain the notion of a \textbf{bimodule}.
If $\M$ is additive and $A$ is an algebra over the Lie operad, then $\P^A_1$ is the usual enveloping algebra of a Lie algebra; an operadic $A$-module is then a Lie-theoretic module in the classical sense.
\end{example}

\begin{rem}\label{r:p01}
If $\P$ is an operad concentrated in arity $\leq 1$ then $\P$ is naturally isomorphic to the enveloping operad $(\P_1)^{\P_0}$ of $\P_0$ as a $\P_1$-algebra. Considering $\P_1$ an an $\M$-enriched category and $\P_0: \P_1\lrar \M$ as an enriched functor we may then identify $\Alg_{\P_{\leq 1}}$ with the coslice category $\Fun(\P_1,\M)_{\P_0/}$. For example, if $A$ is a $\P$-algebra then the category of $\P^A_{\leq 1}$-algebras is equivalent to the category of $\P_1^A$-algebras under $\P_0^A$, i.e.~$A$-modules $M$ equipped with a map of $A$-modules $A\lrar M$. Similarly, the operad $\P^{A,+}_{\leq 0} := (\P^A)^+_{\leq 0}$ is the operad whose algebras are objects $V\in \M^W$ equipped with a map $A\lrar V$ in $\M^W$.
\end{rem}
\begin{example}\label{e:enriched-categories}
Consider the operad $\P_O$ from Example~\ref{e:basic-1}(\ref*{e:basic-1cat}), whose algebras in $\M$ are $\M$-enriched categories with object set $O$. 
If $\C$ is such an enriched category, then the enveloping operad of $\C$ is an operad $\P^\C:=\P_O^\C$ in $\M$ whose algebras are $\M$-enriched categories with object set $O$ equipped with a map from $\C$. Note that $\P^\C$ does not come from an operad in sets in general. 

Explicitly, $\P^\C$ can be identified with the symmetrization of the non-symmetric operad whose object of $n$-ary operations from $(x_1,y_1),(x_2,y_2),...,(x_n,y_n)$ to $(x_\ast,y_\ast)$ is the given by 
\begin{gather}
\P^\C\big((x_1,y_1),(x_2,y_2),...,(x_n,y_n); (x_\ast,y_\ast)\big) = \nonumber\\
\Map_\C(x_\ast,x_1) \otimes \Map_\C(y_1,x_2) \otimes ... \otimes \Map_\C(y_{n-1},x_n) \otimes \Map_\C(y_n,y_\ast).\label{d:pc}
\end{gather}
The nullary operations are given by $\P^{\C}(\emptyset ; (x_\ast, y_\ast))=\C(x_\ast, y_\ast)$ and the composition of operations uses the compositions in $\P_O$ and in $\C$. In particular, we have the following:
\begin{enumerate}[-]\setlength{\itemsep}{4pt}
 \item The operad $(\P^\C)_{1}$ of unary operations corresponds (see Remark \ref{r:bimodule}) to the $\M$-enriched category with object set $W = O \times O$ and morphism objects $(\P^\C)_{1}((x_1,y_1);(x_\ast,y_\ast)) =  \Map_\C(x_\ast,x_1) \otimes \Map_\C(y_1,y_\ast)$. In other words, it is the $\M$-enriched category $\C^{\op} \otimes \C$. In particular, an operadic module over $\C$ is the same as a $\C$-bimodule, i.e., a functor $\C^{\op} \otimes \C \lrar \M$.
 \item $(\P^\C)_0$ is given by $(\P^\C)_0(x, y)=\Map_{\C}(x, y)$. It is an algebra over $\P^\C$ by inserting $\Map_{\C}(x_i, y_i)$ at each intermediate step in \eqref{d:pc}. In particular, it has the structure of a $\C$-bimodule $\Map_{\C}(-, -): \C^{\op}\otimes\C\lrar \M$.
 \item By Remark \ref{r:bimodule}, the $W$-colored symmetric sequence of $n$-ary operations $(\P^\C)_n$ carries two actions of $(\P^\C)_1=\C^{\op}\otimes\C$: the action $(\P^\C)_1\circ (\P^\C)_n\lrar (\P^\C)_n$ is given on \eqref{d:pc} by precomposition with arrows from $\C$ in $x_\ast$ and postcomposition in $y_\ast$. The action $(\P^\C)_n\circ (\P^\C)_1\lrar (\P^\C)_n$ is given on \eqref{d:pc} by postcomposition in $x_1, \dots, x_n$ and precomposition in $y_1, \dots y_n$.
\end{enumerate}
\end{example}

\subsection{The filtration on a free algebra}\label{ss:filtration}
In this section we will recall the natural filtration on the free algebra over a colored operad $\P$ generated by an object $X$ together with a map $\P_0\lrar X$, i.e.~ the free $\P$-algebra where the nullary operations have already been specified. This is a special case of the filtration on a pushout of $\P$-algebras along a free map $\P \circ Y\lrar \P \circ X$ (see, e.g.,~\cite{PS18},\cite{BM09} and \cite{Cav14}) in the case where $Y = \P_0$ and the pushout is taken along $\P \circ \P_0 \lrar \P_0$. For our purposes we need a somewhat more specific formulation of these results, in which the filtration is directly associated to the filtration of $\P$ by skeleta (see Definition~\ref{d:skeleta}).

Let $\M$ be a closed symmetric monoidal category and let $\P$ be a $W$-colored symmetric sequence in $\M$. 
We will denote by $\O := \P^+_{\leq 0}$ the operad freely generated from $\P_{\leq 0}$ (see Definition~\ref{d:skeleta}). We now recall that $\P_n$ inherits from $\P$ the structure of a $\P_1$-bimodule and $\P_{\leq n}$ inherits from $\P$ the structure of a $\P_{\leq 1}$-bimodule (see Remark~\ref{r:bimodule}). In particular, there is a canonical map $\P_n\lrar \P_{\leq n}$ of left $\P_1$-modules, which induces a map $\P_n\circ \O\lrar \P_{\leq n}$ of $\P_1-\O$-bimodules. 
\begin{lem}\label{l:Q_n}
Let $\P$ be a $W$-colored operad in $\M$. Then for every $n \geq 2$ the square of $\P_{1}-\O$-bimodules
\begin{equation}\label{e:pushout}
\vcenter{\xymatrix@R=1.3pc@C=1.3pc{
\Big(\P_n\circ \O\Big)_{\leq n-1} \ar[r]\ar[d] & \P_n\circ \O \ar[d] \\ 
\P_{\leq n-1} \ar[r] & \P_{\leq n} \\
}}
\end{equation}
is a pushout square. Here the left vertical map is obtained by applying the functor $(-)_{\leq n-1}$ to the right vertical map.
\end{lem}
\begin{proof}
The composition product $(X, Y)\mapsto X\circ Y$ preserves colimits in the first argument, and colimits in the second argument if $X$ is concentrated in arity $1$ (see Remark~\ref{r:semi-linear}).
This implies that the forgetful functor from $\P_{1}-\O$-bimodules to $W$-symmetric sequences preserves and detects colimits, and so it suffices to show that the above square is a pushout square in the category of $W$-symmetric sequences. Since all objects are trivial in arities $>n$ and both horizontal maps are isomorphisms in arities $<n$, it remains to prove that the square in arity $n$ is a pushout square. Indeed, in arity $n$ the left vertical map is an isomorphism between initial objects and the right vertical map is an isomorphism because $\O$ coincides with the unit of $\SymSeq_W$ (with respect to $\circ$) in arities $\geq 1$. 
\end{proof}

Let us now consider the natural operad maps $\O \x{\psi}{\lrar} \P_{\leq 1} \x{\vphi}{\lrar} \P$. The inclusion $\rho = \vphi \circ \psi:\O\lrar \P$ induces a free functor $\rho_!: \Alg_\O\lrar \Alg_\P$. When $X$ is an $\O$-algebra (i.e. an object of $\M^W$ equipped with a map from $\P_0$), $\rho_!(X)$ is given by the relative composition product $\P\circ_{\O} X$ (which, as a $W$-colored symmetric sequence, is concentrated in arity $0$). Lemma~\ref{l:Q_n} together with Remark~\ref{r:filtration-converge} and Remark~\ref{r:semi-linear} then imply the following:
\begin{cor}\label{c:filtration}
The underlying left $\P_{\leq 1}$-module of the free $\P$-algebra $\P \circ_{\O} X$ can be written as a colimit
$ \P \circ_{\O} X \cong \displaystyle\mathop{\colim}_{n \geq 1} \P_{\leq n} \circ_{\O} X 
$,
where for $n \geq 2$ the $n$'th step can be understood in terms of a pushout square of left $\P_1$-modules
\begin{equation}\label{e:pushout-x}
\vcenter{\xymatrix@R=1.3pc@C=1.3pc{
\Big(\P_n\circ \O\Big)_{\leq n-1} \circ_{\O} X \ar[r]\ar[d] & \P_n\circ X \ar[d] \\ 
\P_{\leq n-1} \circ_{\O} X \ar[r] & \P_{\leq n} \circ_{\O} X. \\
}}
\end{equation}
\end{cor}
\begin{example}\label{e:free-cat-1}
Let $\C$ be an $\M$-enriched category with object set $S$ and let $\P:=\P^\C$ be $\M$-enriched operad from Example \ref{e:enriched-categories}. In this case, an $\O$-algebra $X$ is given by a collection of maps $\iota: \Map_\C(x, y)\lrar X(x, y)$ in $\M$. The arrows of the $\M$-enriched category $\P^\C\circ_{\O} X$ are freely generated by arrows from $\Map_\C$ and $X$, subject to the condition that arrows from $\Map_\C$, as well as their images under $\iota: \Map_\C\lrar X$, are composed according to the composition in $\C$. 

The $0$-th stage of the filtration is simply $\C$ itself and the stages $\P^\C_{\leq n} \circ_{\O} X$ contain only sequences of arrows $x_\ast\stackrel{f_1}{\lrar}x_1\stackrel{g_1}{\lrar}y_1\stackrel{f_2}{\lrar}\dots \stackrel{g_k}{\lrar}y_k\stackrel{f_{k+1}}{\lrar}y_\ast$ where the $f_i$ come from $\Map_\C$, the $g_i$ from $X$ and where $k\leq n$ (cf.\ Equation \eqref{d:pc}). In each stage, one adds all sequences containing exactly $n$ arrows from $X$. These are glued along those sequences with at least one arrow in the image of $\iota: \Map_\C\lrar X$; the composition is then already contained in $\P^\C_{\leq n-1} \circ_{\O} X$.
\end{example}

The filtration of Corollary~\ref{c:filtration} is somewhat non-satisfactory: while $\P \circ_{\O} X \cong \colim_n \P_{\leq n} \circ_{\O} X$ is a filtration of $\P \circ_{\O} X$ as a left $\P_{\leq 1}$-module (or, equivalently, as a $\P_{\leq 1}$-algebra, since $\P \circ_{\O} X$ is concentrated in arity $0$), the consecutive steps~\eqref{e:pushout-x} are only pushout squares of left $\P_1$-modules. We note that the difference between the two notions is not big. Since $\P_{\leq 1} \cong \P_1^{\P_0}$ (see~Remark~\ref{r:p01}) we see that if we consider $\P_0$ as a left $\P_{\leq 1}$-module then the category 
of left $\P_{\leq 1}$-modules is naturally equivalent to the category of left $\P_1$-modules under $\P_0$. We may hence fix the situation by performing a mild ``cobase change''.

\begin{define}\label{d:R}
Let $X$ be an $\O$-algebra. We define the map $R^-_n(X) \lrar R^+_n(X)$ by forming the following pushout diagram in the category of left $\P_1$-modules
\begin{equation}\label{e:def-R} 
\vcenter{\xymatrix@R=1.5pc@C=1.3pc{
(\P_n \circ \O)_{0} \ar[r]\ar[d] & (\P_n \circ \O)_{\leq n-1} \circ_{\O} X \ar[d]\ar[r] & \P_n \circ X \ar[d] \\
\P_0 \ar[r] & \poc R^-_n(X) \ar[r] & \poc R^+_n(X).  \\
}}
\end{equation}

As $R^-_n(X)$ and $R^+_n(X)$ are left $\P_1$-modules which carry a map of left $\P_1$-modules from $\P_0$ we may naturally consider both of them as left $\P_{\leq 1}$-modules. We also remark that $R^-_n(X)$ and $R^+_n(X)$ are concentrated in arity $0$ (since all the other objects in the square are), and we may hence consider them also as $\P_{\leq 1}$-algebras.
\end{define}

\begin{lem}\label{l:Q_n-2}
Let $X$ be an $\O$-algebra. Then for every $n \geq 2$ there is a pushout square of $\P_{\leq 1}$-algebras
\begin{equation}\label{e:pushout-R}
\vcenter{\xymatrix@R=1.5pc@C=1.3pc{
R^-_n(X) \ar[r]\ar[d] & R^+_n(X) \ar[d] \\ 
\P_{\leq n-1} \circ_{\O} X \ar[r] & \P_{\leq n} \circ_{\O} X. \poc \\
}}
\end{equation}
\end{lem}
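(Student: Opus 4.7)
The plan is to derive the square \eqref{e:pushout-R} from \eqref{e:pushout-x} via a pasting-lemma argument, performing a cobase change along the operadic composition map $(\P_n \circ \O)_0 \to \P_0$ and then upgrading the resulting pushout of left $\P_1$-modules to a pushout of $\P_{\leq 1}$-algebras.

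Concretely, let me abbreviate $A = (\P_n \circ \O)_{\leq n-1} \circ_\O X$, $B = \P_n \circ X$, $C = \P_{\leq n-1} \circ_\O X$, and $D = \P_{\leq n} \circ_\O X$, so that \eqref{e:pushout-x} presents $D$ as the pushout $B \sqcup_A C$ in the category of left $\P_1$-modules. The defining rectangle of Definition \ref{d:R}, together with the standard pasting lemma, shows that the square
\[
\xymatrix{
A \ar[r]\ar[d] & B \ar[d] \\
R^-_n(X) \ar[r] & R^+_n(X)
}
\]
is itself a pushout of left $\P_1$-modules. I would first check that the maps $A \to C$ and $B \to D$ descend to canonical maps $R^-_n(X) \to C$ and $R^+_n(X) \to D$. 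By the universal property of the pushouts defining $R^\pm_n(X)$ this amounts to showing that the composite $(\P_n \circ \O)_0 \to A \to C$ factors through the canonical map $\P_0 \to C$ from the initial $\P_{\leq 1}$-algebra, and similarly for $D$. This is essentially the only piece of genuine content: it comes from observing that the arity-$0$ component of the horizontal map $(\P_n \circ \O)_{\leq n-1} \to \P_{\leq n-1}$ in Lemma \ref{l:Q_n} is by construction the operadic composition map $(\P_n \circ \O)_0 \to \P_0$, a fact that is preserved upon tensoring over $\O$ with the $\O$-algebra $X$.

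Once these descended maps are in place, I would stack the square above on top of \eqref{e:pushout-R} to form a $3 \times 2$ diagram whose outer rectangle is exactly \eqref{e:pushout-x}. Since both the top square and the outer rectangle are pushouts of left $\P_1$-modules, the pasting lemma in its cancellation form yields that the bottom square, which is \eqref{e:pushout-R}, is a pushout of left $\P_1$-modules as well. To upgrade this to a pushout of $\P_{\leq 1}$-algebras I would invoke Remark \ref{r:p01}, which identifies left $\P_{\leq 1}$-modules with the coslice of left $\P_1$-modules under $\P_0$: pushouts in such a coslice are computed in the underlying category, and since all four corners of \eqref{e:pushout-R} are concentrated in arity $0$ they are genuine $\P_{\leq 1}$-algebras, so the pushout of $\P_{\leq 1}$-modules computed in arity $0$ is automatically the pushout of $\P_{\leq 1}$-algebras. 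The main obstacle, to the extent that there is one, is the arity-$0$ factorization through $\P_0$ alluded to above; this is routine unwinding of the composition product formula \eqref{e:composition} but demands some care.
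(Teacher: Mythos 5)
Your proposal is correct and follows essentially the same route as the paper's proof: both first observe that the defining rectangle of $R^\pm_n(X)$ from Definition~\ref{d:R} yields (by the pasting lemma) a pushout square on top, stack it against the pushout square~\eqref{e:pushout-x} of Lemma~\ref{l:Q_n} tensored over $\O$ with $X$, and cancel to extract~\eqref{e:pushout-R}, with the upgrade from left $\P_1$-modules to $\P_{\leq 1}$-algebras handled via Remark~\ref{r:p01}. You are slightly more explicit than the paper about verifying that the vertical maps descend through the pushouts (the factorization of $(\P_n\circ\O)_0 \to C$ through $\P_0$), which the paper absorbs into the phrase ``using the universal property of pushouts,'' but the content is the same.
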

\begin{proof}
We have a commutative diagram of left $\P_1$-modules
\begin{equation}\label{e:rectangle}
\vcenter{\xymatrix@R=1.5pc@C=1.3pc{
(\P_n \circ \O)_{0} \ar[r]\ar[d] & (\P_n \circ \O)_{\leq n-1} \circ_{\O} X \ar[r]\ar[d] & \P_n \circ X \ar[d] \\
\P_0 \ar[r] & \P_{\leq n-1} \ar[r] & \poc \P_{\leq n}. \\
}}
\end{equation}
Using the universal property of pushouts we may extend~\eqref{e:rectangle} to a commutative diagram of the form
\begin{equation}\label{e:multi-pushout} 
\vcenter{\xymatrix@R=1.5pc@C=1.3pc{
(\P_n \circ \O)_{0} \ar[r] \ar[d] & (\P_n \circ \O)_{\leq n-1} \circ_{\O} X \ar[r]\ar[d] &\P_n \circ X \ar[d] \\
\P_0 \ar[r] & \poc R^-_n(X) \ar[r]\ar[d] & \poc R^+_n(X) \ar[d] \ar[d] \\
& \P_{\leq n-1} \ar[r] &  \P_{\leq n}, \\
}}
\end{equation}
where the upper rectangle is the one defining $R^-_n(X) \lrar R^+_n(X)$. 
The right vertical rectangle is just~\eqref{e:pushout-x}, and is hence a pushout rectangle. It then follows that the bottom right square is a pushout square of left $\P_1$-modules, as desired.
\end{proof}
\begin{example}\label{e:free-cat-2}
In the situation of Example \ref{e:free-cat-1}, $\P^\C_n\circ X$ contains finite sequences $\stackrel{f_1}{\lrar}\stackrel{g_1}{\lrar}\dots\stackrel{g_n}{\lrar}\stackrel{f_{n+1}}{\lrar}$ with exactly $n$ arrows from $X$. This object forms a $\C$-bimodule by pre- and postcomposing such sequences with arrows from $\C$.

The object $R^+_n(X)$ is the quotient where we identify sequences for which each $g_i=\iota(h_i)$ is in the image of $\iota: \Map_\C\lrar X$, with the single composed arrow $\iota\big(f_{n+1}h_n\dots h_1f_1\big)$. These composed arrows are precisely the target of a natural map of $\C$-bimodules $\Map_\C\lrar R^+_n(X)$. Note that this map does not lift to a map of bimodules $\Map_\C\lrar \P^\C_n\circ X$.
\end{example}

Our goal is to compute the map of symmetric sequences underlying $R^-_n(X) \lrar R^+_n(X)$. Since this map is the cobase change of the map $(\P_n\circ\O)_{\leq n-1}\circ_\O X\lrar \P_n\circ X$ (Definition \ref{d:R}), it essentially suffices to identify the latter map. To this end, let us introduce the following notation:
\begin{notn}
For a fixed $w_0 \in W$ let us denote by $\Sig_{w_0} \subseteq \Sig_W$ the full subgroupoid spanned by those $\ovl{w}: \uline{k}_+\lrar W$ such that $w_\ast = w_0$.
\end{notn}

\begin{notn}\label{n:sigw0}
For $m \geq 0$, recall the subgroupoids $\Sig^m_W$ and $\Sig^{\leq m}_W$ appearing just above Definition~\ref{d:skeleta}. Given $w_0 \in W$ we will denote by $\Sig^m_{w_0} := \Sig_{w_0} \cap \Sig^m_W$ and $\Sig^{\leq m}_{w_0} := \Sig_{w_0} \cap \Sig^{\leq m}_W$.
\end{notn}
Recall that for any tuple of morphisms $\{f_i: A_i\lrar B_i\}_{i\in \uline{n}}$ in $\M$, the iterated pushout-product map
$$
\Box_{i\in\uline{n}} f_i: Q(\{f_i\})\lrar \bigotimes_{i\in \uline{n}} B_i
$$
is defined as follows. Let $\Sub(\uline{n})$ denote the poset of subsets of $\uline{n}$ and consider the diagram
$$\xymatrix{
\F(\{f_i\}): \Sub(\uline{n})\ar[r] & \M; \hspace{4pt} I\ar@{|->}[r] & \big(\bigotimes_{i\in I} B_i\big)\otimes \big(\bigotimes_{j\in \uline{n}\setminus I} A_j\big).
}$$
If $\Sub^0(\uline{n})\subseteq \Sub(\uline{n})$ is the sub-poset of proper subsets of $\uline{n}$, then the iterated pushout-product map $\Box_{i\in\uline{n}} f_i$ is defined to be the map
\begin{equation}\label{e:pushoutproduct}
Q(\{f_i\}):=\displaystyle\mathop{\colim}_{I\in \Sub^0(\uline{n})}\F(\{f_i\}) \lrar \displaystyle\mathop{\colim}_{I\in \Sub(\uline{n})}\F(\{f_i\})\cong \bigotimes_{i\in\uline{n}} B_i,
\end{equation}
where the isomorphism uses that $\uline{n}$ is a terminal object of $\Sub(\uline{n})$.
\begin{rem}\label{r:pushoutproduct}
For any $i\in \uline{n}$, the map $\Box_{j\in\uline{n}} f_j$ and $\big(\Box_{j\in \uline{n}\setminus i}f_j\big)\Box f_i$ are naturally isomorphic; this can be seen by writing $\Sub^0(\uline{n})$ as a union of the sub-posets $\Sub(\uline{n}\setminus i)$ and $\Sub^0(\uline{n})\setminus\{\uline{n}\setminus i\}$, so that that a colimit over $\Sub^0(\uline{n})$ decomposes as a pushout of colimits over these subposets and their intersection. In particular, $\Box_{j\in\uline{n}} f_j$ can indeed be obtained by inductively taking binary pushout-products.
\end{rem}  
\begin{defn}\label{d:qx}
Let $X$ be an $\O$-algebra, i.e.\ a sequence of objects $\{X(w)\}_{w\in W}$ in $\M$, together with maps $\O(w)\cong \P_0(w)\lrar X(w)$. For every $\ovl{w}\in\Sigma_W$, let us write 
$$
\F_X(\ovl{w}, -):=\F\big(\{\P_0(w_i)\lrar X(w_i)\}_{i\in\uline{n}}\big): \Sub(\uline{n})\lrar \M
$$
and
\begin{equation}\label{e:Qxw}
Q(X, \ovl{w}):=Q\big(\{\P_0(w_i)\rar X(w_i)\}_{i\in\uline{n}})\big) \lrar \bigotimes_{i\in\uline{n}} X(i)
\end{equation}
for the iterated pushout-product map.
\end{defn}
\begin{pro}\label{p:compute-1}
For each $w_0 \in W$, considered as an object in $\Sigma_W$ of arity zero, the map
$$ ((\P_n \circ \O)_{\leq n-1} \circ_{\O}  X)(w_0) \lrar (\P_n \circ X)(w_0) $$
can be identified with the map
\begin{equation}\label{e:Q}
\displaystyle\mathop{\colim}_{\ovl{w} \in \Sig^n_{w_0}} \Big[\P_n(\ovl{w}) \otimes Q(X,\ovl{w})\Big] \lrar\displaystyle\mathop{\colim}_{\ovl{w} \in \Sig^n_{w_0}}\Big[ \P_n(\ovl{w}) \otimes \Big(\bigotimes_{i \in \uline{n}} X(w_i)\Big)\Big] 
\end{equation}
induced by the pushout-product map \eqref{e:Qxw}.
\end{pro}
\begin{cor}\label{c:compute-2}
For any $w_0 \in W$, Diagram~\eqref{e:def-R}
evaluated at $w_0$ is isomorphic to
$$
\resizebox{35em}{!}{
\xymatrix@C=1em@R=2pc{
\displaystyle\mathop{\colim}_{\ovl{w} \in \Sig^n_{w_0}} \Big[\P_n(\ovl{w}) \otimes \Big(\bigotimes_{i \in \uline{n}} \P_0(w_i)\Big)\Big] \ar[r] \ar[d] & \displaystyle\mathop{\colim}_{\ovl{w} \in \Sig^n_{w_0}} \Big[\P_n(\ovl{w}) \otimes Q(X,\ovl{w})\Big] \ar[r]\ar[d] &\displaystyle\mathop{\colim}_{\ovl{w} \in \Sig^n_{w_0}}\Big[ \P_n(\ovl{w}) \otimes \Big(\bigotimes_{i \in \uline{n}} X(w_i)\Big)\Big] \ar[d] \\
\P_0(w_0) \ar[r] & \poc R^-_n(X)(w_0) \ar[r] & \poc R^+_n(X)(w_0).  \\
}}$$
In particular, the map $\vphi_{w_0}:R^-_n(X)(w_0) \lrar R^+_n(X)(w_0)$ is a cobase change of the map 
$$ 
\displaystyle\mathop{\colim}_{\ovl{w} \in \Sig^n_{w_0}} \Big[\P_n(\ovl{w}) \otimes Q(X,\ovl{w})\Big] 
\lrar 
\displaystyle\mathop{\colim}_{\ovl{w} \in \Sig^n_{w_0}}\Big[\P_n(\ovl{w}) \otimes \Big(\bigotimes_{i \in \uline{n}} X(w_i)\Big)\Big].
$$
\end{cor}
The remainder of this section is devoted to the proof of Proposition \ref{p:compute-1}. 

\begin{notn}
Recall the decomposition groupoid $\Dec_W$ of Construction~\ref{c:dec} which is endowed with the functor $\pi: \Dec_W \lrar \Sig_W$ sending $(\uline{k} \lrar \uline{n},\ovl{v})$ to $(\uline{k},\ovl{v}|_{\uline{k}_+})$. 
For $m \geq 0$ we will denote by 
$$ \Dec^m_W := \pi^{-1}\Sig^m_W \qquad\text{and}\qquad \Dec^{\leq m}_W := \pi^{-1}\Sig^{\leq m}_W $$ 
the corresponding preimage subgroupoids. Similarly, for a $w_0 \in W$ we will denote by 
$$ \Dec_{w_0} := \pi^{-1}\Sig_{w_0}, \quad\quad \Dec^m_{w_0} := \pi^{-1}\Sig^m_{w_0} \quad\text{and}\quad \Dec^{\leq m}_{w_0} := \pi^{-1}\Sig^{\leq m}_{w_0} $$ 
the corresponding preimage subgroupoids.
\end{notn}

Let us start with some preliminary observations about the composition product:
\begin{lem}\label{l:comp}
Let $X,Y,Z \in \SymSeq_W(\M)$ be symmetric sequences such that $X$ is concentrated in arity $0$. Then the following assertions hold:
\begin{enumerate}[(1)]
\item $Y\circ X$ is concentrated in arity $0$ and for every $w_0\in W$ (considered as a $0$-arity object of $\Sigma_W$) the inclusion $\Sig_{w_0} \cong \Dec^0_{w_0} \subseteq \Dec_W$ induces an isomorphism
$$
(Y \circ X)(w_0) \cong \displaystyle\mathop{\colim}_{[\ovl{w}:\uline{k}_+ \lrar W] \in \Sig_{w_0}}Y(\ovl{w}) \otimes\Big( \displaystyle\mathop{\bigotimes}_{i \in \uline{k}} X(w_i)\Big).
$$
\item 
For $m \geq 0$ we have 
$$
(Y\circ Z)_{\leq m}\cong \pi^{\leq m}_!\big(Y\boxtimes Z\big)|_{\Dec^{\leq m}_W},
$$
where we denote by $\pi^{\leq m}: \Dec^{\leq m}_W \hrar \Dec_W \lrar \Sig_W$ is the composed functor.
\item For every $w_0\in W$, $((Y \circ Z)_{\leq m} \circ X)(w_0)$ is naturally isomorphic to
\begin{equation}
\displaystyle\mathop{\colim}_{\x{(\phi:\uline{k} \lrar \uline{n},\ovl{v}) 
\in}{\Dec^{\leq m}_{w_0}}} \Big[
Y(\ovl{v}|_{\uline{n}_+}) \otimes \Big(\displaystyle\mathop{\bigotimes}_{j \in \uline{n}} Z\Big(\ovl{v}|_{\phi_*^{-1}(j)}\Big)\Big) \otimes \Big(\displaystyle\mathop{\bigotimes}_{i \in \uline{k}} X(v_i)\Big)
\Big].
\end{equation}
\end{enumerate}
\end{lem}
\begin{proof}
For (1), if $X$ is concentrated in arity 0 then $X$ then $Y\boxtimes X: \Dec_W\lrar \M$ takes initial values outside $\Dec^0_W$ and is hence a left Kan extension of $(Y \boxtimes X)|_{\Dec^0_W}$. It then follows that $Y \circ X$ is given by the left Kan extension of $(Y \boxtimes X)|_{\Dec^0_W}$ along the composed functor $\pi^0:\Dec^0_W \lrar \Dec_W \lrar \Sig_W$. For a given $w_0 \in \Sig^0_W \subseteq \Sig_W$ the fiber inclusion $(\pi^0)^{-1}(w_0) \subseteq \Dec^0_W$ identifies with the full inclusion $\Sig_{w_0} \cong\Dec^0_{w_0} \subseteq \Dec^0_W$, hence the result follows. 

Assertion (2) follows immediately from the Beck-Chevalley property of left Kan extensions (see~\cite[Proposition 11.6]{Joy}) associated to the homotopy Cartesian square of groupoids
$$ \xymatrix@R=1.3pc@C=1.3pc{
\Dec^{\leq m}_W \ar[r]\ar[d] & \Dec_W \ar[d] \\
\Sig^{\leq m}_W \ar[r] & \Sig_W. \\
}$$

For (3), we can use (1), (2) and the projection formula (which holds in this case since $\M$ is closed monoidal, see~\cite{FHM03}) to compute
\begin{align*}
((Y \circ Z)_{\leq m} \circ X)(w_0) &\cong \displaystyle\mathop{\colim}_{[\ovl{w}: \uline{k}_+ \lrar W] \in \Sig_{w_0}}\Big[\big(\pi^{\leq m}_!(Y\boxtimes Z)\big)(\ovl{w}) \otimes\Big( \displaystyle\mathop{\bigotimes}_{i \in \uline{k}} X(w_i)\Big)\Big]\\
&\cong \displaystyle\mathop{\colim}_{\x{(\phi:\uline{k} \lrar \uline{n},\ovl{v}) 
\in}{\Dec^{\leq m}_{w_0}}} \Big[\big(Y\boxtimes Z\big)\big(\uline{k}\stackrel{\phi}{\lrar} \uline{n}, \ovl{v}\big)\otimes\Big( \displaystyle\mathop{\bigotimes}_{i \in \uline{k}} X(v_i)\Big)\Big].
\end{align*}
Inserting the formula for $Y\boxtimes Z$ (see Definition \ref{d:comp}) yields the desired result. 
\end{proof}

\begin{proof}[Proof of Proposition \ref{p:compute-1}]
The formula for $(\P_n\circ X)(w_0)$ follows immediately from part (1) of Lemma \ref{l:comp}; indeed, since $\P_n$ is concentrated in arity $n$ the diagram $\ovl{w} \mapsto Y(\ovl{w}) \otimes\Big( \bigotimes_{i \in \uline{k}} X(w_i)\Big)$ takes initial values outside the essential image of $\Sig^n_{w_0} \subseteq \Sig_{w_0}$, and is hence a left Kan extension of its restriction to $\Sig^n_{w_0}$. 

Let us now establish the formula for $(\P_n \circ \O)_{\leq n-1} \circ_{\O} X$. By definition it is given as the coequalizer of the diagram
\begin{equation}\label{e:bar}\vcenter{\xymatrix{
\Big(\P_n\circ \O\Big)_{\leq n-1}\circ\O \circ X \ar@<0.5ex>[r] \ar@<-0.5ex>[r] & 
\Big(\P_n\circ \O\Big)_{\leq n-1} \circ X, \\
}}\end{equation}
where the top arrow is induced from the left $\O$-module structure of $X$ and the bottom arrow from the right $\O$-module structure of $\Big(\P_n\circ \O\Big)_{\leq n-1}$. 

We can use part (3) of Lemma \ref{l:comp} to describe the symmetric sequences in this diagram, which are all concentrated in arity $0$. To describe their value at $w_0$, let us consider the following:
\begin{enumerate}[($\star$)]
\item
Let $\X_{w_0} \subseteq \Dec^{\leq n-1}_{w_0}\subseteq \Dec_W$ be the full subgroupoid spanned by the objects
$(\phi: \uline{k} \lrar \uline{m},\ovl{v}:(\uline{k} \coprod \uline{m})_+ \lrar W)$ with $k \leq n-1$ satisfying the following conditions: $m=n$ (i.e., $\uline{m}$ has $n$ elements), $\phi$ is injective and $v: (\uline{k} \coprod \uline{m})_+ \lrar W$ factors as $(\uline{k} \coprod \uline{m})_+ \lrar \uline{m}_+ \lrar W$ and $v_{\ast} = w_0$. 
\end{enumerate}
Since $\P_n$ is concentrated in arity $n$ and $\O$ contains only identities and $0$-ary operations, the diagram $\Dec^{\leq n-1}_{w_0} \lrar \M$ given by
$$ \big(\uline{k}\stackrel{\phi}{\lrar}\uline{m}, \ovl{v}) \mapsto \P_n(\ovl{v}|_{\uline{m}_+}) \otimes \Big(\bigotimes_{j \in \uline{m}} \O\big(\ovl{v}|_{\phi_*^{-1}(j)}\big)\Big) \otimes \Big(\bigotimes_{i \in \uline{k}} X(v_i)\Big) $$
takes initial values outside (the essential image of) the full subgroupoid $\X_{w_0} \subseteq \Dec^{\leq n-1}_{w_0}$ of ($\star$). We may therefore replace the colimit in part (3) of Lemma \ref{l:comp} by a colimit over $\X_{w_0}$ to obtain
$$
((\P_n \circ \O)_{\leq n-1} \circ X)(w_0) \cong 
\displaystyle\mathop{\colim}_{\x{(\phi:\uline{k} \lrar \uline{n},\ovl{v}) 
\in}{\X_{w_0}}}
\Big[
\P_n(\ovl{v}|_{\uline{n}_+}) \otimes \Big(\displaystyle\mathop{\bigotimes}_{j \in \uline{n}} \O\Big(\ovl{v}|_{\phi_*^{-1}(i)}\Big)\Big) \otimes \Big(\displaystyle\mathop{\bigotimes}_{i \in \uline{k}} X(v_i)\Big)
\Big].
$$
Consider the functor
$$ q:\X_{w_0} \lrar \Sig^n_{w_0}; \quad\quad (\phi:\uline{k} \lrar \uline{n},v) \mapsto (\uline{n},v|_{\uline{n}_+}) .$$
The functor $q$ is fibered in sets, where the fiber over $\ovl{w} \in \Sig^n_{w_0}$ is equivalent to the set of proper subsets $I \subsetneq \uline{n}$. An automorphism of $\ovl{w}$ acts on this fiber via the inclusion $\Aut(\ovl{w}) \subseteq \Aut(\uline{n})$ (see Remark \ref{r:autw}). Breaking the colimit along the fibration $q:\X_{w_0} \lrar \Sig^n_{w_0}$ of ($\star$), we find that
\begingroup\makeatletter\def\f@size{9}\check@mathfonts\makeatother
\begin{flalign}\label{f:computation}
((\P_n \circ \O)_{\leq n-1} \circ X)(w_0) & \cong \displaystyle\mathop{\colim}_{\ovl{w} \in \Sig^n_{w_0}} \displaystyle\mathop{\coprod}_{I \subsetneq \uline{n}} 
\Big[\P_n(\ovl{w}) \otimes\Big(\displaystyle\mathop{\bigotimes}_{\uline{n} \setminus I} \O(w_j)\Big) \otimes \Big(\displaystyle\mathop{\bigotimes}_{i \in \uline{k}} X(w_i)\Big)
\Big]\\
\nonumber &\cong \displaystyle\mathop{\colim}_{\ovl{w} \in \Sig^n_{w_0}}\bigg[\P_n(\ovl{w}) \otimes\coprod_{I \subsetneq \uline{n}}\Big[ \Big(\bigotimes_{j\in \uline{n} \setminus I}\P_0(w_j)\Big)\otimes \Big(\bigotimes_{i\in I} X(w_i)\Big)\Big]\bigg].
\end{flalign}\endgroup
\noindent The second isomorphism is due to the commutativity of tensor products with coproducts in each variable separately and the fact that the $0$-ary operations of $\O$ are those of $\P$. 

Similarly, note that $\O\circ X$ is the free $\O$-algebra on $X$, which is naturally isomorphic to $\P_0\coprod X$. We may therefore compute
\begin{flalign}\label{f:computation-2}
 &((\P_n \circ \O)_{\leq n-1} \circ \O \circ  X)(w_0) \cong &\\ 
\nonumber & \displaystyle\mathop{\colim}_{\ovl{w} \in \Sig^n_{w_0}}\bigg[\P_n(\ovl{w}) \otimes\coprod_{I \subsetneq \uline{n}} \Big[\Big(\bigotimes_{j\in \uline{n} \setminus I}\P_0(w_j)\Big)\otimes \Big(\bigotimes_{i\in I} \big(\P_0(w_i) \coprod X(w_i)\big)\Big)\Big]\bigg] \cong &\\
\nonumber &\displaystyle\mathop{\colim}_{\ovl{w} \in \Sig^n_{w_0}}
\bigg[\P_n(\ovl{w}) \otimes\coprod_{I' \subseteq I \subsetneq \uline{n}}\Big[ \Big(\bigotimes_{j\in \uline{n} \setminus I}\P_0(w_j)\Big)\otimes\Big( 
\bigotimes_{i \in I \setminus I'} \P_0(w_i) \Big)\otimes \Big(\bigotimes_{i' \in I'} X(w_{i'})\Big)
\Big]\bigg] \cong &\\
\nonumber &\displaystyle\mathop{\colim}_{\ovl{w} \in \Sig^n_{w_0}}
\bigg[\P_n(\ovl{w}) \otimes\coprod_{I' \subseteq I \subsetneq \uline{n}} \Big[\Big(\bigotimes_{j\in \uline{n} \setminus I'}\P_0(w_j)\Big)\otimes \Big(\bigotimes_{i' \in I'} X(w_{i'})\Big)
\Big]\bigg],
\end{flalign}
where the second isomorphism is due to the commutativity of tensor products with coproducts in each variable separately. Combining~\eqref{f:computation} and~\eqref{f:computation-2} we may now identify $(\P_n \circ \O)_{\leq n-1} \circ_{\O} X)(w_0)$ with the coequalizer
$$ \xymatrix@R=0em{
\displaystyle\mathop{\colim}_{\ovl{w} \in \Sig^n_{w_0}}
\Big[\P_n(\ovl{w}) \otimes\coprod_{I' \subseteq I \subsetneq \uline{n}} \Big[\Big(\bigotimes_{j\in \uline{n} \setminus I'}\P_0(w_j)\Big)\otimes \Big(\bigotimes_{i' \in I'} X(w_{i'})\Big)
\Big]\Big]  \ar@<0.5ex>[r] \ar@<-0.5ex>[r] & \\
\displaystyle\mathop{\colim}_{\ovl{w} \in \Sig^n_{w_0}}\Big[\P_n(\ovl{w}) \otimes\coprod_{I \subsetneq \uline{n}}\Big[ \Big(\bigotimes_{j\in \uline{n} \setminus I}\P_0(w_j)\Big)\otimes \Big(\bigotimes_{i\in I} X(w_i)\Big)\Big]\Big].  &\\
}$$
Using now the notation $\F_X(\ovl{w},I) := \Big(\bigotimes_{j\in \uline{n} \setminus I}\P_0(w_j)\Big)\otimes \Big(\bigotimes_{i\in I} X(w_i)\Big)$ of Definition \ref{d:qx}) and the commutativity of tensor products with colimits in each variable separately we may write the coequalizer above as
\begin{equation}\label{e:computation-3} 
\displaystyle\mathop{\colim}_{\ovl{w} \in \Sig^n_{w_0}}\bigg[\P_n(\ovl{w})\otimes\coeq\Big[
\xymatrix{
\displaystyle\mathop{\coprod}_{I' \subseteq I \subsetneq \uline{n}}\F_X(\ovl{w},I') \ar@<0.5ex>[r] \ar@<-0.5ex>[r] & \displaystyle\mathop{\coprod}_{I \subsetneq \uline{n}}\F_X(\ovl{w},I)\\
}\Big]\bigg].
\end{equation}
Here the bottom map sends the component $\F_X(\ovl{w},I')$ to the same component on the right hand side, while the top map sends it to the component $\F_X(\ovl{w},I)$ using the structure maps $\P_0(w_i) \lrar X(w_i)$ for $i \in I \setminus I'$. We now observe that the coequalizer appearing in the inner brackets of~\eqref{e:computation-3} is exactly the degree $\leq 1$ part of the bar construction computing the colimit of $\F$ on $\Sub^0(\uline{n})$ (see~\cite[\S V.2]{Mac13}). We may finally conclude that
$$ ((\P_n \circ \O)_{\leq n-1} \circ_{\O} X)(w_0) \cong \displaystyle\mathop{\colim}_{\ovl{w} \in \Sig^n_W(w_0)} \P_n(\ovl{w}) \otimes Q(X,\ovl{w})$$
and that
$((\P_n \circ \O)_{\leq n-1} \circ_{\O} X)(w_0) \lrar (\P_n \circ X)(w_0)$
identifies with the map
$$ \displaystyle\mathop{\colim}_{\ovl{w} \in \Sig^n_W(w_0)}\Big[ \P_n(\ovl{w}) \otimes Q(X,\ovl{w})\Big] \lrar \displaystyle\mathop{\colim}_{\ovl{w} \in \Sig^n_W(w_0)}\Big[ \P_n(\ovl{w}) \otimes \Big(\bigotimes_{i \in \uline{n}} X(w_i)\Big)\Big], $$
as desired.
\end{proof}

\section{Stabilization of algebras over operads}\label{ss:comparison}
In this section we will establish the main results of this paper, following the outline given in the introduction. In \S\ref{s:main}, we will establish that stabilization is insensitive to algebraic operations of arity $\geq 2$, by giving an equivalence between the stabilization of the category of augmented algebras over an operad $\P$ and the stabilization of the category of algebras over its $1$-skeleton $\P_{\leq 1}$.  

We then show in \S\ref{s:tangent} how this comparison result can be used to equate tangent categories of algebras with tangent categories of modules. The latter can then be described explicitly as suitable categories of enriched lifts, using \S\ref{s:functor}. In the last section \S\ref{s:infinity} we show how to harness the results of \S\ref{s:main} to obtain analogous results in the $\infty$-categorical setting.  

Throughout this section, we will assume that $\M$ is a \textbf{combinatorial SM model category} which is \textbf{differentiable} in the following sense (cf.\ \cite[Definition 6.1.1.6]{Lur14}): for every homotopy finite category $\I$ (i.e., a category whose nerve is a finite simplicial set), the right derived limit functor $\RR\lim: \M^{\I}\lrar \M$ preserves $\NN$-indexed homotopy colimits. A Quillen pair $\F: \M\adj \N: \G$ is differentiable if $\M$ and $\N$ are differentiable and $\RR\G$ preserves $\NN$-indexed homotopy colimits.

Recall that an operad $\P$ is called \textbf{$\Sig$-cofibrant} if the underlying symmetric sequence of $\P$ is projectively cofibrant, and \textbf{admissible} if the model structure on $\M$ transfers to the category $\Alg_\P$ of $\P$-algebras.
When $\P$ is admissible we will also consider the category $\Alg_{\P}^{\aug}$ of augmented algebras as a model category with its slice model structure. 

\begin{defn}
We will say that $\P$ is \textbf{stably admissible} if it is admissible and in addition the stable model structure on $\Sp(\Alg_{\P}^{\aug})$ exists.
\end{defn}

\begin{rem}\label{r:admissible}
One case where stable admissibility can often be established is when $\P$ is $1$-skeletal, i.e., $\P \cong \P_{\leq 1}$. Indeed, recall from Remark \ref{r:p01} that a $1$-skeletal operad $\P$ is simply an $\M$-enriched category $\P_1$ together with an enriched functor $\P_0: \P_1\lrar \M$. The category of $\P$-algebras is then equivalent to the category $\Fun(\P_1, \M)_{\P_0/}$ of enriched functors $\P_1\lrar \M$ under $\P_0$. When $\P$ is $\Sig$-cofibrant we can endow $\Fun(\P_1, \M)_{\P_0/}$ with the coslice model structure associated to the projective model structure on $\Fun(\P_1,\M)$. Under the equivalence of categories $\Alg_\P \simeq \Fun(\P_1,\M)_{\P_0/}$ this model structure is the one transferred from $\M^W$. In particular, any $1$-skeletal $\Sig$-cofibrant operad in $\M$ is admissible. Furthermore, in this case the forgetful functor $\Alg_\P^{\aug} \lrar \M^W_{\P_0//\P_0}$ is a left Quillen functor. It then follows that $\Alg_\P^{\aug}$ is left proper (and hence that $\P$ is stably admissible) as soon as $\P$ is $\Sig$-cofibrant and $\M$ is left proper.
\end{rem}

\subsection{The comparison theorem}\label{s:main}

Our goal in this section is to prove the core result of this paper, which relates the stabilization of $\Alg_{\P}^{\aug}$ to the stabilization of the simpler category $\Alg_{\P_{\leq 1}}^{\aug}$, obtained by forgetting the operations of arity $\geq 2$. First recall that the map $\vphi: \P_{\leq 1} \lrar \P$ induces an adjunction 
$$\xymatrix{
\vphi^{\aug}_!:\Alg_{\P_{\leq 1}}^{\aug}\ar@<1ex>[r] & \Alg_{\P}^{\aug} \ar@<1ex>[l]_-{\upvdash}}:\vphi^*_{\aug}
$$
on augmented algebras and hence an adjunction
$$\xymatrix{
\vphi_{!}^{\Sp} := \Sp(\vphi^{\aug}_!):\Sp(\Alg_{\P_{\leq 1}}^{\aug})\ar@<1ex>[r] & \Sp(\Alg_{\P}^{\aug}) \ar@<1ex>[l]_-{\upvdash}}:\Sp(\vphi^*_{\aug}) =: \vphi^*_{\Sp}
$$
on spectrum objects. Our main theorem can then be formulated as follows:

\begin{thm}\label{t:comp}
Let $\M$ be a differentiable, left proper, combinatorial SM model category and let $\P$ be a $\Sig$-cofibrant stably admissible operad in $\M$. Assume either that $\M$ is right proper or that $\P_0$ is fibrant. Then the induced Quillen adjunction
$$ \xymatrix{
\vphi^{\Sp}_!:\Sp(\Alg_{\P_{\leq 1}}^{\aug}) \ar@<1ex>[r] & \Sp(\Alg_{\P}^{\aug}) \ar@<1ex>[l]_-{\upvdash}}: \vphi^*_{\Sp} 
$$
is a Quillen equivalence.
\end{thm}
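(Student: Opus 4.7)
The plan is to combine the standard Quillen equivalence criterion with the skeletal filtration on free algebras developed in Section~\ref{ss:filtration}. Since both $\vphi^{\aug}_!$ and $\vphi^*_\aug$ preserve the zero object $\P_0$, the induced adjunction $\vphi^{\Sp}_!\dashv\vphi^*_\Sp$ acts by applying them level-by-level on the $(\NN\times\NN)_\ast$-diagrams presenting spectrum objects. By the standard Quillen equivalence criterion for adjunctions between left Bousfield localizations, it therefore suffices to show that for every cofibrant spectrum object $X_{\bullet,\bullet} \in \Sp(\Alg_{\P_{\leq 1}}^{\aug})$, the derived unit $X_{\bullet,\bullet} \lrar \vphi^*_\Sp R\vphi^\Sp_!(X_{\bullet,\bullet})$ is a stable equivalence. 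Because $\vphi^\Sp_!$ is computed levelwise, this reduces the problem to controlling the cofiber of the unstabilized unit $\eta_{X_{k,\ell}} : X_{k,\ell} \lrar \vphi^*_\aug \vphi^{\aug}_!(X_{k,\ell})$ uniformly over spectrum levels $(k,\ell)$, and showing that the resulting diagram of cofibers is stably contractible.

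To analyze this cofiber, I would invoke Lemma~\ref{l:Q_n-2}: writing $Y := X_{k,\ell}$ and $\vphi^{\aug}_!(Y) = \P\circ_\O Y$, the unit fits into a transfinite tower in $\Alg_{\P_{\leq 1}}^{\aug}$,
\[ Y = \P_{\leq 1}\circ_\O Y \lrar \P_{\leq 2}\circ_\O Y \lrar \cdots \lrar \mathop{\colim}_n \P_{\leq n}\circ_\O Y = \vphi^{\aug}_!(Y), \]
in which each step with $n\geq 2$ is a pushout along $R_n^-(Y) \lrar R_n^+(Y)$. By Corollary~\ref{c:compute-2}, this latter map arises in $\M^W$ as a cobase change of the pushout-product $f^{\Box n}$ of the structure map $f: \P_0 \lrar Y$, equivariantly tensored with the arity-$n$ components $\P_n(\ovl w)$; its cofiber in $\M^W$ is (up to $\Aut(\ovl w)$-equivariance) an $n$-fold monoidal power of the reduction $\ovl Y = \hocofib(\P_0 \to Y)$ smashed with $\P_n(\ovl w)$. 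The $\Sig$-cofibrancy of $\P$, together with left properness of $\M$ and the hypothesis that either $\P_0$ is fibrant or $\M$ is right proper, guarantees that each pushout is a homotopy pushout and that the entire tower presents $\vphi^{\aug}_!(Y)$ as a cell-like cofibrant extension of $Y$. The problem thus reduces to showing that for every $n \geq 2$, the cofiber of the pushout along $R_n^-(Y) \lrar R_n^+(Y)$ becomes contractible in $\Sp(\Alg_{\P_{\leq 1}}^{\aug})$ when $Y$ varies as $X_{k,\ell}$ over the spectrum levels of a cofibrant $X_{\bullet,\bullet}$.

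This last step is the principal obstacle, and is the model-categorical realization of the Goodwillie-calculus principle that \emph{$n$-homogeneous functors with $n \geq 2$ vanish upon linearization}. To establish it, I would exploit that the assignment $Y \mapsto \P_n(\ovl w)\otimes_{\Aut(\ovl w)} \ovl Y^{\otimes n}$, viewed as a functor $\Alg_{\P_{\leq 1}}^{\aug} \lrar \M^W$, is polynomial of degree $n$ in $Y$ and is reduced (vanishes at $\P_0$). Concretely, applied levelwise to a cofibrant spectrum object $X_{\bullet,\bullet}$, the reductions $\ovl{X_{k,k}}$ behave as iterated loops of $\ovl{X_{k+j,k+j}}$ modulo contractible off-diagonal entries; distributing the tensor power over the suspension delivered by the monoidal structure, and using the differentiability of $\M$ to commute the sequential colimits with the finite homotopy limits computing $\Om^\infty$, the $n$-fold monoidal power acquires $n$ independent deloopings at each stage, hence becomes infinitely deloopable and stably contractible. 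The most delicate points are (i)~controlling the $\Aut(\ovl w)$-equivariance so that the cofiber remains cofibrant, which is handled by the $\Sig$-cofibrancy of $\P$ via projective cofibrancy of each $\P_n(\ovl w)$, and (ii)~commuting the transfinite composition over $n$ with the left Bousfield localization defining $\Sp$, for which the differentiability hypothesis on $\M$ is essential. Once each filtration stage is shown to be stably trivial, the theorem follows by composing the stable equivalences along the entire tower.
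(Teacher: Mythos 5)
Your overall plan — reduce to a statement about the derived unit, filter the free functor by arity using Lemma~\ref{l:Q_n-2}, and show each stage vanishes stably — is the same skeleton as the paper's argument (carried out via Propositions~\ref{p:comp} and \ref{p:filtration-triv}). However, there is a genuine gap at the heart of the reduction. You write $\vphi^{\aug}_!(Y) = \P\circ_\O Y$ for $Y = X_{k,\ell}$ an arbitrary (level of a) cofibrant spectrum object in $\Alg_{\P_{\leq 1}}^{\aug}$. This is false in general: $\vphi^{\aug}_!$ is extension along $\P_{\leq 1}\lrar \P$, so $\vphi^{\aug}_!(Y) = \P\circ_{\P_{\leq 1}} Y$, and this agrees with $\P\circ_\O Y$ only when $Y$ is itself \emph{free} over $\O$, i.e.\ $Y = \P_{\leq 1}\circ_\O X$ for some $\O$-algebra $X$. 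The skeletal filtration and the pushouts of Lemma~\ref{l:Q_n-2} are constructed precisely for $\P\circ_\O X$ with $X$ an $\O$-algebra, and cannot be applied directly to an arbitrary augmented $\P_{\leq 1}$-algebra. The paper resolves this by a monadicity argument (Proposition~\ref{p:ps} together with the $\infty$-categorical Barr--Beck theorem): every levelwise cofibrant pre-spectrum in $\Alg_{\P_{\leq 1}}^{\aug}$ is a sifted homotopy colimit of free objects $\P_{\leq 1}\circ_\O X_{\bullet\bullet}$, both the unit map and the class of stable equivalences are compatible with sifted colimits, and so the problem reduces to the free case. Without this step your argument does not get off the ground.

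Two further points. First, reducing the Quillen equivalence to ``derived unit is a stable equivalence on cofibrant objects'' is not a complete criterion on its own; one also needs that $\RR\vphi^*_{\aug}$ detects stable equivalences (this is Proposition~\ref{p:comp}(1), proved using differentiability of $\vphi^*_{\aug}$ and \cite[Cor.~2.4.8]{part1}), and your proposal omits it. Second, the crux ``the $n$-fold monoidal power acquires $n$ independent deloopings ... hence becomes infinitely deloopable and stably contractible'' is not an argument: infinite deloopability alone does not force contractibility. The precise mechanism (Lemma~\ref{l:reduced}) is that, because off-diagonal entries of the pre-spectra are weak zero objects and the functor is multi-reduced, each bonding map $\Sig\F(Z^1_{n,n},\dots)\lrar\F(Z^1_{n+1,n+1},\dots)$ of the resulting $(\NN\times\NN)$-diagram factors through a weak zero object, so the spectrification is a sequential homotopy colimit of null-homotopic maps and hence trivial. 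This null-homotopy argument is what makes the ``Goodwillie vanishing'' concrete, and it is the step your sketch leaves unsupported.
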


\begin{rem}\label{r:fresse}
If every object in $\M$ is cofibrant and $\P$ is a cofibrant single-colored operad (with respect to the transferred model structure on operads in $\M$) then $\P$ is admissible (\cite[Theorem 4]{Spi01}) and the association $A \mapsto \P^A$ preserves weak equivalences (\cite[Theorem 17.4.B(b)]{fresse}). This implies that $\Alg_{\P}$ (as well as $\Alg_{\P}^{\aug}$ ) is left proper and hence that $\P$ is stably admissible. Work of Rezk (\cite{rezk}) gives the same conclusion for a colored cofibrant operad when $\M$ is the category of simplicial sets. It seems very likely that this statement holds for every cofibrant colored operad and every combinatorial model category $\M$ in which every object is cofibrant.
\end{rem}

\begin{example}\label{e:enriched-categories-2}
Suppose that $\M$ is as in Theorem \ref{t:comp} and that $\C$ is a fibrant $\M$-enriched category with set of objects $O$. Recall the $\M$-enriched operad $\P^\C$ of Example \ref{e:enriched-categories}, whose algebras are $\M$-enriched categories with object set $O$, equipped with a functor from $\C$. Algebras over $\P^\C_{\leq 1}$ are bimodules $\F: \C^{\op}\otimes\C\lrar \M$  equipped with a bimodule map $\Map_\C\lrar \F$.

The operad $\P^\C$ is $\Sigma$-cofibrant, since it is induced from a non-symmetric operad. When $\M$ is sufficiently nice (e.g., if every object is cofibrant and weak equivalences are stable under filtered colimits \cite[Remark 3.1.7]{HNP16}, \cite[Theorem A.3.2.4]{Lur09}), $\P^\C$ is also stably admissible. For example, one could take $\M$ to be simplicial sets (with the Joyal or the Kan-Quillen model structure) or chain complexes over a field. In this case, Theorem \ref{t:comp} identifies the stabilization of $\M$-enriched categories over-under $\C$ with object set $O$ with the stabilization of bimodules over-under $\Map_\C$. In \cite{HNP16}, we will develop this idea further in order to study the tangent categories and Quillen cohomology of enriched categories where the object set is not fixed.
\end{example}

The key ingredient in the proof of Theorem~\ref{t:comp} is the following statement: 
\begin{pro}\label{p:comp}
Let $\M$ be a differentiable, combinatorial SM model category and let $\P$ be an admissible $\Sig$-cofibrant operad in $\M$ (which implies the same for $\P_{\leq 1}$, see Remark~\ref{r:admissible}). Assume either that $\M$ is right proper or that $\P_0$ is fibrant. Consider the induced adjunction on $\NN\times\NN$-diagrams
$$\xymatrix{
\vphi_!^{\NN\times\NN}: (\Alg_{\P_{\leq 1}}^{\aug})^{\NN\times\NN}\ar@<1ex>[r] & (\Alg_{\P}^{\aug})^{\NN\times\NN} \ar@<1ex>[l]_-{\upvdash}: \vphi^*_{\NN \times \NN}}.
$$
Then the following two statements hold:
\begin{enumerate}[(1)]
\item the functor $\vphi^*_{\NN \times \NN}$ preserves and detects stable weak equivalences between pre-spectra (see Definition~\ref{d:Om}).
\item for any levelwise cofibrant pre-spectrum object $X_{\bullet\bullet}$ in $\Alg_{\P_{\leq 1}}^{\aug}$, the unit map $u_X: X_{\bullet\bullet}\lrar \vphi^*_{\NN \times \NN}\vphi_!^{\NN \times \NN}X_{\bullet\bullet}$ is a stable weak equivalence.
\end{enumerate}
\end{pro}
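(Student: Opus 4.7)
My plan for proving Proposition~\ref{p:comp} is as follows.

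Part~(1) will be essentially formal. The restriction functor $\vphi^*_{\aug}$ agrees on underlying $\M^W$-data with the identity, so it creates weak equivalences and fibrations; extended levelwise to injective $(\NN \times \NN)$-diagrams it continues to detect levelwise weak equivalences and, being a right adjoint, preserves (homotopy) limits. Consequently it sends fibrant $\Om$-spectra to fibrant $\Om$-spectra (the defining condition---homotopy Cartesianness of diagonal squares---being detected on underlying $\M^W$-data) and reflects weak equivalences between them. Since a map of pre-spectra is a stable equivalence precisely when its stable fibrant replacement is a levelwise weak equivalence, this immediately yields both the preservation and the detection statement.

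The crux of the proof is part~(2). The strategy is to exploit the skeletal filtration of Section~\ref{ss:filtration}. For an augmented $\P_{\leq 1}$-algebra $X$, the algebra $\vphi_!^{\aug}(X) \in \Alg_\P^{\aug}$ admits, after forgetting back along $\vphi^*_{\aug}$, a natural exhaustion
\[
\vphi^*_{\aug} \vphi_!^{\aug}(X) \;\cong\; \colim_n F_n(X)
\]
in $\Alg_{\P_{\leq 1}}^{\aug}$ with $F_1(X) = X$, obtained from an augmented variant of Lemma~\ref{l:Q_n-2}: each transition $F_{n-1}(X) \to F_n(X)$ sits in a pushout square whose attaching map, by Corollary~\ref{c:compute-2}, is controlled by the pushout-product driven by the augmentation $X \to \P_0$, tensored with the arity-$n$ piece $\P_n(\ovl{w})$ over $\Aut(\ovl{w})$. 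Applied levelwise to a pre-spectrum $X_{\bu\bu}$, this reduces~(2) to showing that for every $n \geq 2$ the map $F_{n-1}(X_{\bu\bu}) \to F_n(X_{\bu\bu})$ is a stable equivalence of pre-spectra. The assumptions that $\P$ is $\Sig$-cofibrant, that $\M$ is left proper, and that either $\M$ is right proper or $\P_0$ is fibrant guarantee that the point-set pushouts of Lemma~\ref{l:Q_n-2} compute genuine homotopy pushouts, so that the filtration faithfully models the derived unit; differentiability of $\M$ then ensures that the transfinite colimit assembling the $F_n$ is itself compatible with stabilization.

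The main obstacle---and the heart of the argument---is verifying that each step of the filtration is a stable equivalence of pre-spectra for $n \geq 2$. The intuition is that in the pointed (augmented) stable world, an $n$-fold tensor power of the augmentation ideal of $X$ acquires enough extra connectivity at each level to become stably invisible; at the level of pre-spectra, the cofiber of $F_{n-1}(X_{\bu\bu}) \to F_n(X_{\bu\bu})$ at position $(k,k)$ becomes increasingly highly connected as $k$ grows, so that its stable fibrant replacement is contractible. Making this rigorous requires careful bookkeeping of the $\Aut(\ovl{w})$-symmetrization in the pushout-product formula of Corollary~\ref{c:compute-1}---this is precisely where the $\Sig$-cofibrancy of $\P$ enters, allowing us to replace strict orbits by homotopy orbits in a homotopically controlled way. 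Once each filtration step is shown to induce a stable equivalence on pre-spectra, differentiability of $\M$ combines them into a stable equivalence of the transfinite colimit, yielding the stable equivalence of the derived unit and completing the proof.
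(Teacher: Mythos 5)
The overall skeleton of your argument for part (2) points in the right direction (use the skeletal filtration and show each step is a stable equivalence), but there are three genuine gaps, one of which is structural and unrepairable as stated.

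\textbf{The reduction to free algebras is missing.} You write down a filtration
$\vphi^*_{\aug}\vphi_!^{\aug}(X) \cong \colim_n F_n(X)$ with $F_1(X)=X$ for an \emph{arbitrary} augmented $\P_{\leq 1}$-algebra $X$. But the filtration developed in \S\ref{ss:filtration}, and in particular Lemma~\ref{l:Q_n-2} and Corollary~\ref{c:compute-2}, only computes $\P\circ_{\O}X$ for $X$ an $\O$-algebra, i.e.\ it only applies when $X$ is a \emph{free} $\P_{\leq 1}$-algebra $\P_{\leq 1}\circ_{\O}Y$. The attaching maps in Corollary~\ref{c:compute-1} are pushout-products driven by the structure map $\P_0\to X$ of an $\O$-algebra; for a genuine $\P_{\leq 1}$-algebra the $\P_1$-action changes the shape of the coequalizer defining $\P\circ_{\P_{\leq 1}}X$, and no analogue of Corollary~\ref{c:compute-2} is available. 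The paper closes this hole by showing that the class of $X_{\bullet\bullet}$ for which the derived unit is a stable equivalence is closed under sifted homotopy colimits, and then invoking the monadicity (via Proposition~\ref{p:ps}, the $\infty$-categorical Barr--Beck theorem, and differentiability) of the free--forgetful adjunction between augmented $\O$- and $\P_{\leq 1}$-algebras to reduce to pre-spectra of the form $\P_{\leq 1}\circ_{\O}X_{\bullet\bullet}$. Without this reduction your filtration argument cannot even get started.

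\textbf{The connectivity argument for the filtration steps fails in this generality.} Your explanation for why $F_{n-1}(X_{\bu\bu})\to F_n(X_{\bu\bu})$ is a stable equivalence for $n\geq 2$ appeals to ``increasing connectivity'' of the cofiber at position $(k,k)$. In an arbitrary differentiable SM model category there is no $t$-structure, no Postnikov theory, and no notion of connectivity; $\M$ could be a stable model category that is not connective, or something entirely non-additive. The paper's argument is instead the cross-effect/null-homotopy argument of Lemma~\ref{l:reduced}: a multi-reduced functor applied to pre-spectra has the property that each structure map $\Sig \F(Z^1_{n,n},Z^2_{n,n})\to\F(Z^1_{n+1,n+1},Z^2_{n+1,n+1})$ factors through an off-diagonal (hence weakly zero) entry, so is null-homotopic; the colimit of null-homotopic maps is weakly zero. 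Corollary~\ref{c:equiv} and Lemma~\ref{l:cof} then identify the cofiber of the pushout-product with the image of such a multi-reduced functor. This is where $\Sig$-cofibrancy is used (to make the coend with $\P_n$ a left Quillen bifunctor), and it requires no connectivity hypotheses. You should replace the connectivity intuition by this null-homotopy mechanism.

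\textbf{Part (1) is not purely about $\Om$-spectrum preservation.} It is true that $\vphi^*_{\aug}$ preserves $\Om$-spectra, but the claim that this ``immediately yields'' both preservation and detection of stable equivalences between \emph{pre-spectra} has a circularity: to compare $\vphi^*(X^{\Om})$ with $(\vphi^*X)^{\Om}$ you need to know that $\vphi^*$ carries the stable equivalence $X\to X^{\Om}$ to a stable equivalence, which is what you are trying to prove. The paper instead uses that $\vphi^*_{\aug}$ is \emph{differentiable} (Proposition~\ref{p:ps}): $\Om$-spectrum replacement is a sequential homotopy colimit of applications of the shifted loop functor, and differentiability lets $\vphi^*_{\aug}$ commute with that colimit up to levelwise equivalence (this is~\cite[Corollary 2.4.8]{part1}). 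Detection then follows because $\vphi^*_{\aug}$ detects levelwise equivalences. You invoke differentiability later for the transfinite colimit in (2), but it is already essential in (1).
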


\begin{proof}[Proof of Theorem~\ref{t:comp} assuming Proposition~\ref{p:comp}]
Since every $\Om$-spectrum is in particular a pre-spectrum we see that every object is stably equivalent to a levelwise cofibrant pre-spectrum object. Combining (1) and (2) of Proposition~\ref{p:comp} we may then deduce that the derived unit of the Quillen adjunction $\vphi^{\Sp}_! \dashv \vphi^*_{\Sp}$ is a natural equivalence. On the other hand, since $\vphi^*_{\aug}$ preserves and detects weak equivalences it follows that $\vphi^*_{\Sp}$ detects stable weak equivalences between $\Om$-spectra, and hence induces a conservative functor on the level of $\infty$-categories. 
This means that the adjunction $\vphi^{\Sp}_! \dashv \vphi^*_{\Sp}$ induces an equivalence on the level of $\infty$-categories and is hence a Quillen equivalence.
\end{proof}

In fact, Proposition~\ref{p:comp} also yields an analogue of Theorem~\ref{t:comp} when $\P$ is not assumed to be stably admissible (see Corollary~\ref{c:compmodel}). 
For this let us recall some notation from~\cite{part1}. 
\begin{define}\label{d:sp'}
For a weakly pointed, differentiable, combinatorial model category $\N$, let $\Sp'(\N) \subseteq \N^{\NN \times \NN}$ denote the full subcategory spanned by $\Om$-spectra, considered as a \textbf{relative category} with respect to levelwise weak equivalences, and $\Sp''(\M) \subseteq \N^{\NN \times \NN}$ the full subcategory spanned by pre-spectra, considered as a relative category with respect to \textbf{stable weak equivalences}.
\end{define} 

We observe that the inclusion $\Sp'(\N) \subseteq \Sp''(\N)$ is an equivalence of relative categories. 
This follows from the fact that one can functorially replace a levelwise cofibrant pre-spectrum $X$ by an $\Om$-spectrum $X^{\Om}$ equipped with a stable weak equivalence $X \lrar X^{\Om}$ (see~\cite[Remark 2.1.9]{part1}) and the fact that a stable weak equivalence between $\Om$-spectra is a levelwise weak equivalence. Of course, when the stable model structure exists this is just a direct corollary of the fact that every object in $\Sp(\M)$ is stably equivalent to a pre-spectrum (see~\cite[Remark 2.3.6]{part1}).
It now follows from~\cite[Remarks 3.3.4]{part1} that the underlying $\infty$-categories of both $\Sp'(\N)$ and $\Sp''(\N)$ model the $\infty$-categorical stabilization $\Sp(\N_{\infty})$.

\begin{cor}\label{c:compmodel}
Let $\M$ be a differentiable, combinatorial SM model category and let $\P$ be an admissible $\Sig$-cofibrant operad in $\M$ (which implies the same for $\P_{\leq 1}$, see Remark~\ref{r:admissible}). Assume either that $\M$ is right proper or that $\P_0$ is fibrant. Then 
$\vphi^*_{\aug}$ induces an equivalence of relative categories
$$ \Sp'(\Alg_\P^{\aug}) \lrar \Sp'(\Alg_{\P_{\leq 1}}^{\aug}). $$
In particular, $\Sp(\Alg_{\P_{\leq 1}}^{\aug})$ is a model for the stabilization of $(\Alg_\P^{\aug})_{\infty}$.
\end{cor}
\begin{proof}
Let $Q: \big(\Alg_{\P_{\leq 1}}^{\aug}\big){}^{\NN \times \NN} \lrar \big(\Alg_{\P_{\leq 1}}^{\aug}\big){}^{\NN \times \NN}$ be a levelwise cofibrant replacement functor.
Since the functor $\vphi^*_{\NN \times \NN}: \big(\Alg_{\P}^{\aug}\big){}^{\NN \times \NN} \lrar \big(\Alg_{\P_{\leq 1}}^{\aug}\big){}^{\NN \times \NN}$ preserves $\Om$-spectra, it follows that $\vphi^{\NN \times \NN} _!\circ Q$ preserves stable weak equivalences between pre-spectra. By Proposition~\ref{p:comp}(1), $\vphi^*_{\NN \times \NN}$ also preserves stable weak equivalences between pre-spectra. It follows that $\vphi^{\NN \times \NN} _!\circ Q$ and $\vphi^*_{\NN \times \NN}$ induce relative functors between $\Sp''(\Alg_\P^{\aug})$ and $\Sp''(\Alg_{\P_{\leq 1}}^{\aug})$. Combining (1) and (2) of Proposition~\ref{p:comp}, we conclude that the compositions $\vphi^*_{\NN \times \NN} \circ \vphi^{\NN \times \NN} _!\circ Q$ and $\vphi^{\NN \times \NN} _!\circ Q \circ \vphi^*_{\NN \times \NN}$ are both related to the identity functors by chains of natural weak equivalences. In particular, $\vphi^*_{\NN \times \NN}: \Sp''(\Alg_{\P}^{\aug}) \lrar \Sp''(\Alg_{\P_{\leq 1}}^{\aug})$ is an equivalence of relative categories and hence $\vphi^*_{\NN \times \NN}:\Sp'(\Alg_{\P}^{\aug}) \lrar \Sp'(\Alg_{\P_{\leq 1}}^{\aug})$ is an equivalence of relative categories as well.
\end{proof}

The rest of this section is devoted to the proof of Proposition~\ref{p:comp}. We begin with some preliminary lemmas. We will say that a map $f: X \lrar Y$ in a weakly pointed model category $\M$ is \textbf{null-homotopic} if its image in $\Ho(\M)$ factors through the zero object $0$. 

\begin{define}
Let $\M_1,...,\M_n,\N$ be weakly pointed model categories and let $\F: \prod_i \M_i \lrar \N$ be a functor (of ordinary categories). We will say that $\F$ is \textbf{multi-reduced} if $\F(X_1,...,X_n)$ is a weak zero object of $\N$ whenever all the $X_i$ are cofibrant and at least one of them is a weak zero object.
\end{define}

\begin{lem}[{cf. \cite[Proposition 6.1.3.10]{Lur14}}]\label{l:reduced}
For $n \geq 2$, let $\M_1,...,\M_n$ and $\N$ be combinatorial differentiable weakly pointed model categories 
and let $\F: \prod_i \M_i \lrar \N$ be a multi-reduced functor. For every collection $\{Z^i_{\bullet\bullet} \in \M_i^{\NN\times\NN}\}_{1 \leq i \leq n}$ of levelwise cofibrant pre-spectrum objects, the object $\F(Z^1_{\bullet\bullet},...,Z^n_{\bullet\bullet}) \in \N^{\NN \times \NN}$ is stably equivalent to a weak zero object.
\end{lem}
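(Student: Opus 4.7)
The plan is to proceed in two steps: first to verify that $\F(Z^1_{\bullet\bullet},\dots,Z^n_{\bullet\bullet})$ is itself a pre-spectrum in $\N$, and then to show that this pre-spectrum is stably equivalent to a weak zero. For the first step, at any off-diagonal index $(p,q)$ with $p\ne q$, every $Z^i_{p,q}$ is by hypothesis a cofibrant weak zero object, so the multi-reducedness of $\F$ immediately implies that $\F(Z^1_{p,q},\dots,Z^n_{p,q})$ is a weak zero object, as required.

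For the second step, by the description of the stable model structure recalled in \S\ref{s:recall} together with the differentiability of $\N$, it suffices to show that for every $k\ge 0$ the sequential colimit
\begin{equation*}
\displaystyle\mathop{\colim}_m\,\Om^m\,\F\bigl(Z^1_{k+m,k+m},\dots,Z^n_{k+m,k+m}\bigr)
\end{equation*}
is a weak zero in (the pointification of) $\N$. Here $\Om$ is the derived loop functor and the transition maps come from the pre-spectrum structure of each $Z^i$; differentiability of $\N$ is what lets the sequential colimit commute past the derived $\Om$.

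The core of the argument is to use multi-reducedness to amplify the effective loop shift inside this colimit. Each $Z^i$ provides a square in $\M_i$ whose two off-diagonal corners are weakly contractible; taking the product of these squares over all $n$ factors yields a $(2n)$-cube of $n$-tuples in $\prod_i \M_i$, and applying $\F$ pointwise gives a $(2n)$-cube in $\N$ whose vertices vanish whenever any individual $Z^i$-factor sits at an off-diagonal position. An iterated total-fiber computation on this cube produces a natural structure map refining the single-$\Om$ shift of the pre-spectrum $\F(Z^1,\dots,Z^n)$ by an extra factor of $\Om^{n-1}$. Feeding this back into the colimit absorbs the explicit $\Om^m$ and leaves an excess $\Om^{(n-1)m}$, which forces the colimit to be a weak zero by a standard connectivity estimate as $m\to\infty$, at least for $n\ge 2$ (which is the relevant range).

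The hardest part will be rendering the iterated cubical total-fiber manipulation above into a rigorous model-categorical argument, so that point-set constructions of cubes, total fibers, iterated derived loops, and sequential colimits are all compatible. This parallels the $\infty$-categorical reasoning of \cite[Proposition 6.1.3.10]{Lur14}, with the levelwise cofibrancy of the $Z^i_{\bullet\bullet}$ together with the differentiability of $\N$ ensuring that all relevant derived operations can be computed point-set.
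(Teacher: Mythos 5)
Your overall reduction is right: the off-diagonal entries of $\F(Z^1_{\bullet\bullet},\dots,Z^n_{\bullet\bullet})$ are weak zero by multi-reducedness, so it is a pre-spectrum, and by \cite[Corollary 2.4.6]{part1} (using differentiability of $\N$) one only needs to show the sequential homotopy colimit $\colim_m\Om^m\F(Z^1_{m,m},\dots,Z^n_{m,m})$ is a weak zero object. This matches the paper up to this point.

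The core of your argument, however, does not go through. You propose to produce, from a $(2n)$-cube built out of the pre-spectrum squares, an ``excess'' loop factor $\Om^{(n-1)m}$, and then invoke a ``standard connectivity estimate as $m\to\infty$'' to conclude the colimit is trivial. There is no such estimate available here: $\N$ is an arbitrary combinatorial differentiable weakly pointed model category (and the codomain will typically already be stable, where $\Om$ is an equivalence), so applying more loop functors produces nothing more contractible. Connectivity arguments of this type are special to spaces or connective spectra and are not part of the ambient hypotheses, so this step cannot be made precise. Moreover, even granting the extra loop factor, factoring a transition map through $\Om^{n-1}(-)$ of something does not by itself make it null-homotopic or force the colimit to vanish.

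The paper's proof instead shows directly that each transition map $g_m$ in the colimit is null-homotopic, which does force the sequential homotopy colimit to be a weak zero object. Concretely (for $n=2$): the adjoint map $f_m\colon\Sig\F(Z^1_{m,m},Z^2_{m,m})\to\F(Z^1_{m+1,m+1},Z^2_{m+1,m+1})$, induced by the outer square of the $3\times 3$ diagram obtained from the two pre-spectrum squares, factors in $\Ho(\N)$ as
$$\Sig\F(Z^1_{m,m},Z^2_{m,m})\lrar\F(Z^1_{m+1,m+1},Z^2_{m,m})\lrar\F(Z^1_{m+1,m+1},Z^2_{m+1,m+1}),$$
and the second factor passes through $\F(Z^1_{m+1,m+1},Z^2_{m,m+1})$, which is weak zero because $Z^2_{m,m+1}$ is weakly contractible and $\F$ is multi-reduced. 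Hence $f_m$, and so $g_m$, is null-homotopic. This is the step your cube-and-connectivity argument is meant to replace, and it is the key idea missing from your proposal; you should aim to extract a null-homotopy of the transition maps from multi-reducedness, not an increase in connectivity.
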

\begin{proof}
For simplicity we will prove the claim for $n=2$. The proof in the general case is similar. Consider the following commutative diagram
$$ \xymatrix@R=1.4pc@C=1.3pc{
\F(Z^1_{n,n},Z^2_{n,n}) \ar[r]\ar[d] & \F(Z^1_{n,n+1}, Z^2_{n,n}) \ar^{\sim}[r]\ar[d] & \F(Z^1_{n,n+1},Z^2_{n,n+1})\ar^{\sim}[d] \\
\F(Z^1_{n+1,n},Z^2_{n,n}) \ar[r]\ar^{\sim}[d] & \F(Z^1_{n+1,n+1},Z^2_{n,n}) \ar[r]\ar[d] & \F(Z^1_{n+1,n+1},Z^2_{n,n+1}) \ar[d] \\
F(Z^1_{n+1,n},Z^2_{n+1,n}) \ar^{\sim}[r] & \F(Z^1_{n+1,n+1},Z^2_{n+1,n}) \ar[r] & \F(Z^1_{n+1,n+1},Z^2_{n+1,n+1}). \\
}$$
Since $\F$ is multi-reduced we have that $\F(X,Z^2_{k,m})$ and $\F(Z^1_{m,k},X)$ are weak zero objects for every $k \neq m$ and any cofibrant $X \in \M$, so that all off-diagonal items in this diagram are weak zero objects.
The external square induces a map $f_n:\Sig \F(Z^1_{n,n},Z^2_{n,n}) \lrar \F(Z^1_{n+1,n+1},Z^2_{n+1,n+1})$ in the homotopy category $\Ho(\N)$, which factors as
$$
\Sig \F(Z^1_{n,n},Z^2_{n,n})\lrar \F(Z^1_{n+1,n+1},Z^2_{n,n}) \lrar \F(Z^1_{n+1,n+1},Z^2_{n+1,n+1}),
$$
where the first map is induced from the top left square. Since the second map is null-homotopic, it follows that the map $f_n$ is null-homotopic as well. By~\cite[ Corollary 2.4.6]{part1} the $\NN \times \NN$-diagram $\F(Z^1_{\bullet\bullet},Z^2_{\bullet\bullet})$ is stably equivalent to an $\Om$-spectrum whose value at the place $(m,m)$ can be computed as a homotopy colimit of the form
\begingroup\makeatletter\def\f@size{9}\check@mathfonts\makeatother
$$ 
\F(Z^1_{m,m},Z^2_{m,m}) \x{g_m}{\lrar} \Om\F(Z^1_{m+1,m+1}, Z^2_{m+1,m+1}) \x{g_{m+1}}{\lrar} \Om^2\F(Z^1_{m+2,m+2},Z^2_{m+2,m+1}) \rar \dots ,
$$\endgroup
where the image of $g_{i}$ in $\Ho(\N)$ is adjoint to $f_{i}$ and hence null-homotopic for every $i \geq m$. Since a homotopy colimit of a sequence of null-homotopic maps is a weak zero object the desired result follows.
\end{proof}

Recall that for any map $f: X \lrar Y$ in a category with a zero object $0$, the \textbf{cofiber} of $f$, denoted $\cof(f)$, is the pushout $0 \coprod_X Y$. 

\begin{lem}\label{l:cof}
Let $\M$ be a strictly pointed combinatorial model category and suppose that $f: X \lrar Y$ is a levelwise cofibration between levelwise cofibrant pre-spectra in $\M$. Then $f$ is a stable weak equivalence if and only if the map $0 \lrar \cof(f)$ is a stable weak equivalence.
\end{lem}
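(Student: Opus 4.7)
The plan is to exploit the fact that the underlying $\infty$-category $\Sp(\M_\infty)$ modelled by stable equivalences is stable, so that a map is an equivalence if and only if its cofiber is zero. The strict pushout $\cof(f)$ is the natural point-set model for this cofiber, and the assumption that $f$ is a levelwise cofibration between levelwise cofibrant objects is exactly what is needed in order to make this identification homotopically meaningful.

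First I would verify that the pushout square
$$\xymatrix{ X \ar[r]^f \ar[d] & Y \ar[d] \\ 0 \ar[r] & \cof(f) }$$
is a homotopy pushout in $\M^{\NN \times \NN}$ equipped with its levelwise model structure. This is immediate at each spot $(n,m)$: the left vertical map has cofibrant source and the top horizontal map is a cofibration between cofibrant objects, so the pushout is a homotopy pushout in $\M$. Along the way I would also check that $\cof(f)$ is again a pre-spectrum: off the diagonal, $X_{n,m}$ and $Y_{n,m}$ are both weakly contractible, so $f_{n,m}$ is an acyclic cofibration between cofibrant objects, and its strict cofiber is therefore weakly zero. Hence the statement ``$0 \lrar \cof(f)$ is a stable equivalence'' makes sense inside $\Sp''(\M)$.

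Next I would pass to the underlying $\infty$-category. By \cite[Remark 3.3.4]{part1}, the relative category $\Sp''(\M)$ models the stable $\infty$-category $\Sp(\M_\infty)$, which is obtained from the $\infty$-category associated to the levelwise model structure by a localization. Since localizations are left adjoints, they preserve pushouts, so the homotopy pushout square above descends to a pushout square
$$\xymatrix{ X \ar[r]^f \ar[d] & Y \ar[d] \\ 0 \ar[r] & \cof(f) }$$
in $\Sp(\M_\infty)$, exhibiting $X \lrar Y \lrar \cof(f)$ as a cofiber sequence in a stable $\infty$-category.

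The conclusion is then formal. For the forward direction, if $f$ is a stable equivalence then, since pushouts preserve equivalences in any $\infty$-category, the parallel map $0 \lrar \cof(f)$ is a stable equivalence as well. Conversely, if $0 \lrar \cof(f)$ is a stable equivalence then $\cof(f)$ is a zero object in $\Sp(\M_\infty)$, and in a stable $\infty$-category a map whose cofiber is zero must itself be an equivalence, so $f$ is a stable equivalence. The only mildly delicate step is the verification that the levelwise pushout really computes the pushout in $\Sp(\M_\infty)$, but this is guaranteed by the cofibrancy and cofibration hypotheses together with the fact that $\Sp''(\M) \hookrightarrow (\M^{\NN \times \NN}, \text{stable})$ presents a reflective localization of $\infty$-categories.
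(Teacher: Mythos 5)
Your proof is correct, but it takes a genuinely different route from the paper's. You verify that the strict cofiber square is a homotopy pushout in the levelwise model structure, pass to the underlying $\infty$-category, invoke the fact that the localization functor $(\M^{\NN\times\NN})_\infty \to \Sp(\M_\infty)$ is a left adjoint (hence preserves pushouts), and then close the argument by the formal fact that in a stable $\infty$-category a map is an equivalence iff its cofiber vanishes. The paper instead stays entirely within the model category: the ``if'' direction is handled by extending the square to a $3\times 3$ diagram of homotopy coCartesian squares, identifying the induced map on the bottom right with a model of $\Sigma X \lrar \Sigma Y$, and then using derived mapping spaces into $\Omega$-spectra together with the loop-adjunction $\Map^h(\Sigma X, W) \simeq \Map^h(X, \Omega W)$ to deduce that $f$ is a stable equivalence. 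Your approach is conceptually cleaner and makes the role of stability transparent, at the cost of invoking the equivalence between the relative category $\Sp''(\M)$ and the stable $\infty$-category $\Sp(\M_\infty)$ (via \cite[Remark 3.3.4]{part1}) as a black box; the paper's argument is more self-contained and works at the level of the diagram category without ever leaving the model-categorical framework, which fits the spirit of the rest of the section. One small point worth tightening in your write-up: when you say ``localizations are left adjoints, so they preserve pushouts,'' you should also note explicitly that the left adjoint sends the strict zero object of $\M^{\NN\times\NN}$ to a zero object of $\Sp(\M_\infty)$ (this holds because it preserves initial objects and $\Sp(\M_\infty)$ is pointed), so that the image square really is a cofiber sequence based at zero.
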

\begin{proof}
Under our assumptions the pushout $0 \coprod_X Y$ is also a homotopy pushout in $\M^{\NN \times \NN}$. It follows that if $f$ is a stable weak equivalence then $0 \lrar \cof(f)$ is a stable weak equivalence. We shall now show that if $0 \lrar \cof(f)$ is a stable weak equivalence then $f$ is a stable weak equivalence. Consider the diagram
$$
\xymatrix@R=1.4pc@C=1.3pc{
X \ar^{f}[r]\ar[d] & Y \ar[d] \ar[r] & Z_1 \ar[d] \\
0 \ar[r] & \cof(f) \ar[r]\ar[d] &\poc X' \ar[d] \\
& Z_2 \ar[r] & \poc Y', \\
}
$$
in which all the squares are homotopy coCartesian and $Z_1,Z_2$ are weak zero objects. If $0 \lrar \cof(f)$ is a stable weak equivalence then the map $\cof(f) \lrar Z_2$ is a stable weak equivalence and hence the map $X' \lrar Y'$ is a stable weak equivalence. On the other hand, since the external rectangles are homotopy coCartesian it follows that the map $X' \lrar Y'$ is a model for the induced map $\Sig X \lrar \Sig Y$ on suspensions. Now for every $\Om$-spectra $W$ we have $\Map^h(X',W[1]) \simeq \Map^h(X,\Om W[1]) \simeq \Map^h(X,W)$ and the same for $Y'$. Since $X' \lrar Y'$ is a stable weak equivalence it now follows that $f: X \lrar Y$ is a stable weak equivalence. 
\end{proof}

\begin{cor}\label{c:equiv}
Let $\M$ be a combinatorial differentiable SM model category 
and let $A^1,...,A^n \in \M$ be a collection of cofibrant objects (with $n\geq 2$). For each $i=1,...,n$ let
$ A^i \x{f^i_{\bullet\bullet}}{\lrar} X^i_{\bullet\bullet} \lrar A^i $ 
be a levelwise cofibrant pre-spectrum object in $\M_{A^i//A^i}$. Then the levelwise pushout-product
$$ f^1_{\bullet\bullet} \Box ... \Box f^n_{\bullet\bullet}: Q(f^1_{\bullet\bullet},...,f^n_{\bullet,\bullet}) \lrar \bigotimes_{i=1}^{n}X^i_{\bullet\bullet} $$
is a stable weak equivalence and levelwise cofibration between levelwise cofibrant pre-spectrum objects in $\M_{A^1 \otimes ... \otimes A^n//A^1 \otimes ... \otimes A^n}$.
\end{cor}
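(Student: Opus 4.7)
The plan is to prove each assertion separately. For the levelwise cofibration between levelwise cofibrant objects, I would apply the pushout-product axiom for the symmetric monoidal model category $\M$ at each bidegree $(k,m)$: since each $f^i_{k,m}: A^i \to X^i_{k,m}$ is a cofibration between cofibrant objects, so is their iterated pushout-product, whose target $X^1_{k,m} \otimes \ldots \otimes X^n_{k,m}$ is in particular cofibrant. For the pre-spectrum structure in $\M_{A^{\otimes n}//A^{\otimes n}}$, I note that when $k \neq m$ each $f^i_{k,m}$ is a weak equivalence between cofibrant objects, hence by Ken Brown's lemma and an iterated application of the pushout-product axiom both source and target of $f^1_{k,m} \Box \ldots \Box f^n_{k,m}$ are weakly equivalent to $A^{\otimes n}$, which is the zero object of $\M_{A^{\otimes n}//A^{\otimes n}}$.

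The heart of the proof is the stable equivalence. By Lemma~\ref{l:cof} applied in the strictly pointed combinatorial model category $\M_{A^{\otimes n}//A^{\otimes n}}$, it suffices to show that the cofiber of $f^1_{\bullet\bullet} \Box \ldots \Box f^n_{\bullet\bullet}$ is stably equivalent to the zero object. The standard pushout-product identity in a pointed SM setting identifies this cofiber (taken levelwise) with the iterated smash product
\[ X^1_{\bullet\bullet} \wedge \ldots \wedge X^n_{\bullet\bullet}, \]
where each $X^i_{\bullet\bullet}$ is regarded as a pointed object of $\M_{A^i//A^i}$ via $f^i$, and where the smash product is taken in $\M_{A^{\otimes n}//A^{\otimes n}}$ bidegree-wise.

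One then applies Lemma~\ref{l:reduced} to the smash product functor
\[ \wedge \colon \prod_i \M_{A^i//A^i} \to \M_{A^{\otimes n}//A^{\otimes n}}, \]
which is easily seen to be multi-reduced on cofibrant inputs: if some $X^i$ equals the zero object $A^i$, then the corresponding factor of the pushout-product is an identity map, forcing the cofiber, and hence the smash product, to be the zero object. Since $\M$ is differentiable and combinatorial, so are each of the pointed slice categories involved, so Lemma~\ref{l:reduced} applies and yields that $X^1_{\bullet\bullet} \wedge \ldots \wedge X^n_{\bullet\bullet}$ is stably equivalent to a weak zero object. Together with Lemma~\ref{l:cof} this completes the proof. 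The main technical obstacle I anticipate is verifying the identification of the cofiber with the smash product coherently in the $\NN \times \NN$-indexing, along with confirming that the cofibrancy conditions needed for Lemma~\ref{l:reduced} are preserved under passage to the over-under category $\M_{A^{\otimes n}//A^{\otimes n}}$.
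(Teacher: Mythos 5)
Your proof follows the same overall strategy as the paper: first handle the cofibration/cofibrancy via the pushout-product axiom, then use Lemma~\ref{l:cof} to reduce the stable-equivalence claim to showing that the cofiber of the pushout-product is stably trivial, then apply Lemma~\ref{l:reduced} to that cofiber functor. In fact the paper literally defines its auxiliary functor $\G$ to be $\cof(f^1\Box\cdots\Box f^n)$, so your ``identification of the cofiber with the smash product'' is not an obstacle at all — it is the same functor under a different name, and the paper makes no attempt to commute $\cof$ with the bidegree indexing either.

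There is, however, a genuine gap in your verification of multi-reducedness. The definition requires $\F(X_1,\dots,X_n)$ to be a \emph{weak} zero object whenever the inputs are cofibrant and at least one of them is a \emph{weak} zero object, i.e.\ $f^i\colon A^i\to X^i$ is merely a weak equivalence (and hence, given the cofibrancy hypothesis, a trivial cofibration), not literally the identity. Your argument only treats the case $X^i = A^i$ strictly, in which the relevant pushout-product factor is an identity. To cover the weak-zero case one should instead argue as the paper does: if some $f^i$ is a trivial cofibration, the pushout-product axiom forces $f^1\Box\cdots\Box f^n$ to be a trivial cofibration, whence the cobase-change map $A^{\otimes n}\to\cof(f^1\Box\cdots\Box f^n)$ is a trivial cofibration and the cofiber is weakly contractible. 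Without this step, Lemma~\ref{l:reduced} cannot be invoked. (Your separate check that the off-diagonal entries are weakly contractible is fine in substance, though it rests on the pushout-product axiom plus two-out-of-three rather than on Ken Brown's lemma per se; the paper omits this check as it is implicit.)
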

\begin{proof}
The pushout-product axiom in $\M$ implies that $f^1_{\bullet\bullet} \Box ... \Box f^n_{\bullet\bullet}$ is a levelwise cofibration between levelwise cofibrant objects. By Lemma~\ref{l:cof} it will now suffice to show that the cofiber of this map is stably equivalent to the zero pre-spectrum in $\M_{A^1 \otimes ... \otimes A^n//A^1 \otimes ... \otimes A^n}$.

Consider the functor $\G: \prod_{i=1}^{n}\M_{A^i//A^i} \lrar \M_{A^1 \otimes ... \otimes A^n//A^1 \otimes ... \otimes A^n}$ given by 
$$ \G(A^1 \x{f^1}{\lrar} X^1 \lrar A^1,...,A^n \x{f^n}{\lrar} X^n \lrar A^n) := \cof(f^1 \Box ... \Box f^n).$$
This functor is multi-reduced: indeed, the cofiber $\cof(f^1 \Box ... \Box f^n)$ is a levelwise weak zero object if at least one of the $f^i$ is a trivial cofibration in $\M$, by the pushout-product axiom. Lemma \ref{l:reduced} now implies that the cofiber of the map $f^1_{\bullet\bullet} \Box ... \Box f^n_{\bullet\bullet}$ is stably equivalent to a zero object, as desired.
\end{proof}

Let us now fix a combinatorial SM model category $\M$, a set of colors $W$ and a $W$-colored operad $\P$ in $\M$. We will be interested in the maps of operads $\O := \P^+_{\leq 0} \x{\psi}{\lrar} \P_{\leq 1} \x{\vphi}{\lrar} \P$, whose composition we denote by $\rho$.
Here $\vphi$ is the skeleton inclusion and $\psi$ is induced from $\P_{\leq 0}\lrar \P_{\leq 1}$.  
Upon passing to operadic algebras, this sequence yields a sequence of adjunctions:
\begin{equation}\label{e:operad-seq-2}
\rho_!:\xymatrix{\Alg_\O \ar[r]<1ex>^{\psi_!} & \Alg_{\P_{\leq 1}}\ar[l]<1ex>_{\upvdash}^{\psi^*}\ar[r]<1ex>^{\vphi_!} & \Alg_\P:\rho^*.\ar[l]<1ex>_{\upvdash}^{\vphi^*}}
\end{equation}
The functors $\psi_!,\rho_!$ and $\vphi^*$ preserve initial objects, and consequently augmented objects. 
In particular, the unit map determines a natural map
$$ u: \psi_!(X) \cong \P_{\leq 1}\circ_{\O} X \lrar \P \circ_{\O} X \cong \vphi^*\rho_!(X), $$
of augmented $\P_{\leq 1}$-algebras.

\begin{pro}\label{p:filtration-triv}
Let $X_{\bullet\bullet} \in (\Alg_{\O}^{\aug})^{\NN\times \NN}$ be a levelwise cofibrant pre-spectrum object in augmented $\O$-algebras. If $\P$ is $\Sig$-cofibrant then the induced map
$$ u:\P_{\leq 1} \circ_{\O} X_{\bullet\bullet} \lrar \P \circ_{\O} X_{\bullet\bullet} $$
is a stable weak equivalence in $(\Alg_{\P_{\leq 1}}^{\aug})^{\NN\times\NN}$.
\end{pro}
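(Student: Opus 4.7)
The plan is to use the skeletal filtration developed in \S\ref{ss:filtration} and show inductively that each step is a stable weak equivalence, then assemble the pieces using differentiability of $\M$. Concretely, the map $u$ is identified with the transfinite composition
\begin{equation*}
\P_{\leq 1}\circ_{\O} X_{\bullet\bullet}\lrar \P_{\leq 2}\circ_{\O} X_{\bullet\bullet}\lrar \cdots \lrar \P_{\leq n}\circ_{\O}X_{\bullet\bullet}\lrar \cdots,
\end{equation*}
so by the differentiability of $\M$ (which implies that filtered colimits of pre-spectra preserve stable equivalences levelwise) it suffices to show that every step $\P_{\leq n-1}\circ_{\O}X_{\bullet\bullet}\lrar \P_{\leq n}\circ_{\O} X_{\bullet\bullet}$ is a stable equivalence in $(\Alg^{\aug}_{\P_{\leq 1}})^{\NN\times\NN}$. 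By Lemma~\ref{l:Q_n-2} this step fits into a pushout square of augmented $\P_{\leq 1}$-algebras whose upper horizontal map is $R^-_n(X_{\bullet\bullet})\lrar R^+_n(X_{\bullet\bullet})$, so using left properness of the transferred model structure on $\Alg^{\aug}_{\P_{\leq 1}}$ (which follows from left properness of $\M$) the problem reduces to showing that $R^-_n(X_{\bullet\bullet})\lrar R^+_n(X_{\bullet\bullet})$ is a stable equivalence and a levelwise cofibration.

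The core input here is Corollary~\ref{c:compute-2}, which identifies this map, colorwise at $w_0\in W$, as the cobase change of the map
\begin{equation*}
\displaystyle\mathop{\colim}_{\ovl{w}\in\Sig^n_{w_0}}\bigl[\P_n(\ovl{w})\otimes Q(X_{\bullet\bullet},\ovl{w})\bigr]\lrar \displaystyle\mathop{\colim}_{\ovl{w}\in\Sig^n_{w_0}}\bigl[\P_n(\ovl{w})\otimes \bigotimes_{i\in\uline{n}}X_{\bullet\bullet}(w_i)\bigr]
\end{equation*}
induced by the pushout-product $f_{\bullet\bullet}(w_1)\Box\cdots\Box f_{\bullet\bullet}(w_n)$ of the augmentation unit maps $f_{\bullet\bullet}(w_i):\P_0(w_i)\lrar X_{\bullet\bullet}(w_i)$. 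Since $X_{\bullet\bullet}$ is an augmented $\O$-algebra, each component $X_{\bullet\bullet}(w_i)$ is a levelwise cofibrant pre-spectrum object in $\M_{\P_0(w_i)//\P_0(w_i)}$, and Corollary~\ref{c:equiv} then implies that the pushout-product map
\begin{equation*}
f_{\bullet\bullet}(w_1)\Box\cdots\Box f_{\bullet\bullet}(w_n):Q(X_{\bullet\bullet},\ovl{w})\lrar \bigotimes_{i}X_{\bullet\bullet}(w_i)
\end{equation*}
is a stable equivalence and levelwise cofibration between levelwise cofibrant pre-spectra in $\M_{\otimes_i\P_0(w_i)//\otimes_i\P_0(w_i)}$, for each fixed $\ovl{w}$.

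The main technical obstacle is transporting this pointwise stable equivalence across the operations $\P_n(\ovl{w})\otimes (-)$ and $\colim_{\Sig^n_{w_0}}$. This is exactly where the $\Sig$-cofibrancy of $\P$ enters: it guarantees that $\P_n$ is projectively cofibrant as a symmetric sequence, so for each isomorphism class of $\ovl{w}$ the object $\P_n(\ovl{w})$ is cofibrant in $\M$ with a free action of the isotropy group $\Aut(\ovl{w})$. Consequently the colimit over $\Sig^n_{w_0}$ (which is a coproduct of $\Aut(\ovl{w})$-coinvariants over isomorphism classes) can be computed as a levelwise homotopy colimit and the functor $\P_n(\ovl{w})\otimes_{\Aut(\ovl{w})}(-)$, applied to cofibrant objects with $\Aut(\ovl{w})$-action, preserves levelwise cofibrations and levelwise weak equivalences; combined with the pushout-product axiom this ensures it sends the stable equivalence provided by Corollary~\ref{c:equiv} to a stable equivalence and levelwise cofibration. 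Taking the cobase change along the structure map to $\P_0(w_0)$ and using left properness yields the stable equivalence $R^-_n(X_{\bullet\bullet})\lrar R^+_n(X_{\bullet\bullet})$, closing the induction. Finally, the hypothesis that $\M$ is right proper or that $\P_0$ is fibrant ensures that the pointwise augmentation actually models the corresponding augmentation in the slice model structure on $\Alg^{\aug}_{\P_{\leq 1}}$, so that these stable equivalences are genuine equivalences in the homotopy theory of augmented $\P_{\leq 1}$-algebras.
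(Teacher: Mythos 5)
Your overall strategy is the same as the paper's: filter $u$ by the skeletal filtration, reduce to the pushout squares of Lemma~\ref{l:Q_n-2}, identify the attaching maps $R^-_n \to R^+_n$ colorwise via the computations of \S\ref{ss:filtration}, and feed Corollary~\ref{c:equiv} into them. However, there is a genuine gap in how you pass from the stable equivalence of attaching maps to the stable equivalence of the pushout steps, and a couple of smaller inaccuracies.

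The gap: you appeal to ``left properness of $\Alg^{\aug}_{\P_{\leq 1}}$'' to reduce to the attaching map $R^-_n(X_{\bullet\bullet})\lrar R^+_n(X_{\bullet\bullet})$, and you verify that this map is (colorwise) a levelwise cofibration in $\M$. But the model structure on $\Alg_{\P_{\leq 1}}^{\aug}\cong \Fun(\P_1,\M)_{\P_0//\P_0}$ is transferred from $\M^W$, and its cofibrations are \emph{projective} cofibrations of enriched functors, which are strictly stronger than colorwise cofibrations. Corollary~\ref{c:equiv} and the coend computation only give you a colorwise $\M$-cofibration, so you have not shown that the square~\ref{e:pushout-R-2} is a homotopy pushout of $\P_{\leq 1}$-algebras, which is what your left-properness argument requires. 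The paper sidesteps exactly this issue by applying the restriction functor $\psi^*_{\aug}:\Alg^{\aug}_{\P_{\leq 1}}\lrar\Alg^{\aug}_{\O}$, which is simultaneously a left and right Quillen functor (so it preserves colimits, in particular the pushout squares) and is differentiable, hence detects stable equivalences between pre-spectra by \cite[Corollary 2.4.8]{part1}. In $\Alg^{\aug}_{\O}\simeq \M^W_{\P_0//\P_0}$ cofibrations \emph{are} colorwise, and there one can apply \cite[Remark 2.1.10]{part1} to conclude. Your argument can be repaired by inserting this $\psi^*_{\aug}$-detection step, but as written it does not go through.

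Two smaller points. First, $\Sig$-cofibrancy of $\P$ does \emph{not} imply that $\Aut(\ovl{w})$ acts freely on $\P_n(\ovl{w})$; it means $\P_n$ is projectively cofibrant as a functor $\Sig^n_{w_0}\lrar\M$, and the correct mechanism is that the coend $\M^{\Sig^n_{w_0}}_{\proj}\times\M^{\Sig^n_{w_0}}_{\inj}\lrar\M$ is a left Quillen bifunctor, so the functor $\L=\P^n_{w_0}\otimes_{\Sig^n_{w_0}}(-)$ with further cobase change to $\P_0(w_0)$ is left Quillen. Second, the hypothesis ``$\M$ right proper or $\P_0$ fibrant'' is not part of Proposition~\ref{p:filtration-triv}; it belongs to Proposition~\ref{p:comp} and plays no role in the present argument, so your closing sentence invoking it is extraneous.
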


\begin{proof}
Note that $\P_{\leq 1}\circ_{\O} X_{\bullet\bullet}$ is a levelwise cofibrant pre-spectrum object in $\Alg_{\P_{\leq 1}}^{\aug}$, since $X_{\bullet\bullet}$ is a levelwise cofibrant pre-spectrum object in $\Alg_{\O}^{\aug}$. Similarly, $\P\circ_{\O} X_{\bullet\bullet}$ is a levelwise cofibrant pre-spectrum object in $\Alg_{\P}^{\aug}$, and hence also defines a pre-spectrum object in $\Alg_{\P_{\leq 1}}^{\aug}$. In particular, the map $u$ is a map between pre-spectrum objects.

The forgetful functor $\psi^*_{\aug}:\Alg_{\P_{\leq 1}}^{\aug} \lrar \Alg_{\O}^{\aug}$ is both a left and a right Quillen functor (see Remark~\ref{r:admissible}) and so the Quillen pair $\psi_!^{\aug} \dashv \psi^*_{\aug}$ is in particular differentiable. Since $\psi^*_{\aug}$ preserves and detects weak equivalences, \cite[Corollary 2.4.8]{part1} implies that $\psi^*_{\NN \times \NN}:(\Alg_{\P_{\leq 1}}^{\aug})^{\NN\times\NN} \lrar (\Alg_{\O}^{\aug})^{\NN\times\NN}$ preserves and detects stable weak equivalences between pre-spectra.
It will hence suffice to show that 
$$ \psi^*_{\NN \times \NN}(u):\psi^*_{\NN \times \NN}(\P_{\leq 1} \circ_{\O} X_{\bullet\bullet}) \lrar \psi^*_{\NN \times \NN}(\P \circ_{\O} X_{\bullet\bullet}) $$ 
is a stable weak equivalence. Now by Corollary~\ref{c:filtration} the map
$\psi^*_{\NN \times \NN}(u)$ is a transfinite composition 
\begin{equation}\label{e:psiseq}\psi^*_{\NN \times \NN}(\P_{\leq 1} \circ_{\O} X_{\bullet\bullet}) \lrar \psi^*_{\NN \times \NN}(\P_{\leq 2} \circ_{\O} X_{\bullet\bullet}) \lrar \psi^*_{\NN \times \NN}(\P_{\leq 3} \circ_{\O} X_{\bullet\bullet})\lrar ...
\end{equation}
of maps of $\NN\times\NN$-diagrams of augmented $\O$-algebras.
It suffices to prove that each map
$$ \psi^*_{\NN \times \NN}(\P_{\leq n-1} \circ_{\O} X_{\bullet\bullet}) \lrar \psi^*_{\NN \times \NN}(\P_{\leq n} \circ_{\O} X_{\bullet\bullet}) $$
is a stable weak equivalence and a levelwise cofibration. 
Indeed, $\psi^*_{\NN \times \NN}(u)$ can then be identified with the canonical map from the levelwise cofibrant object $\psi^*_{\NN \times \NN}(\P_{\leq 1} \circ_{\O} X_{\bullet\bullet})$ to the levelwise homotopy colimit of the sequence of stable weak equivalences \eqref{e:psiseq}. It is hence a stable weak equivalence.

Now since $\psi^*_{\aug}$ is left Quillen, it preserves the pushout square \eqref{e:pushout-R} of Lemma \ref{l:Q_n-2}.
By~\cite[Remark 2.1.10]{part1} it then suffices to show that for every $w_0 \in W$ and every $n \geq 2$ the map
\begin{equation}\label{e:Rn}
\psi^*_{\NN \times \NN}(R^-_n(X_{\bullet\bullet}))(w_0) \lrar \psi^*_{\NN \times \NN}(R^+_n(X_{\bullet\bullet}))(w_0)
\end{equation}
is a stable weak equivalence and a levelwise cofibration between levelwise cofibrant $\NN \times \NN$-diagrams in $\M_{\P_0(w_0)//\P_0(w_0)}$.

Let us now fix a color $w_0 \in W$ and a number $n \geq 1$. As in \S\ref{ss:filtration}, let us denote by $\Sig^n_{w_0} \subseteq \Sig^n_W$ the full subgroupoid spanned by those $\ovl{w} \in \Sig^n_W$ such that $w_\ast = w_0$ (see Notation~\ref{n:sigw0}). 
We have the injectively cofibrant functor $\P_0^{\otimes n}:\Sig^n_{w_0} \lrar \M$ given by $\P_0^{\otimes n}(\ovl{w}):=\bigotimes_{i\in\uline{n}} \P_0(w_i)$. For $k,m \in \NN$ consider the functors $X^{\otimes n}_{k,m},Q_{k,m}: \Sig^n_{w_0} \lrar \M$ with $X_{k,m}^{\otimes n}(\ovl{w}):=\bigotimes_{i\in\uline{n}} X_{k,m}(w_i)$ and such that $Q_{k,m}(\ovl{w})$ is the domain of the pushout-product of the maps $\P_0(w_i) \lrar X_{k,m}(w_i)$ for $i=1,...,n$. Corollary~\ref{c:equiv} now implies that the natural map
\begin{equation}\label{e:box}
Q_{\bullet\bullet} \lrar  X^{\otimes n}_{\bullet\bullet}
\end{equation}
is a stable weak equivalence and a levelwise cofibration between levelwise cofibrant pre-spectrum objects in $(\M^{\Sig^n_{w_0}}_{\inj})_{\P_0^{\otimes n}//\P_0^{\otimes n}}$.

Now recall that the coend operation $\M^{\Sig^n_{w_0}}_{\proj} \times \M^{\Sig^n_{w_0}}_{\inj} \lrar \M$, which we will denote by 
$F,G \mapsto F \otimes_{\Sig^n_{w_0}} G$, 
is a left Quillen bifunctor (see, e.g.,~\cite[Remark A.2.9.27]{Lur09}). Let $\P^{n}_{w_0}: \Sig^n_{w_0} \lrar \M$ be the functor $\ovl{w} \mapsto \P(\ovl{w})$. Since $\P$ is $\Sig$-cofibrant we have that $\P^{n}_{w_0}$ is projectively cofibrant, and so we may consider the left Quillen functor $\L$ given by the composition
$$ \xymatrix{
\L:(\M^{\Sig^n_{w_0}}_{\inj})_{\P_0^{\otimes n}/} \ar^-{\P^{n}_{w_0} \otimes_{\Sig^n_{w_0}} (-)}[rr]& & \M_{\P^{n}_{w_0} \otimes_{\Sig^{n}_{w_0}} \P^{\otimes n}_0/} \ar[r] & \M_{\P_0(w_0)/}, \\
}$$ 
where the second functor is the cobase change along the map $\P^{n}_{w_0} \otimes_{\Sig^{n}_{w_0}} \P^{\otimes n}_0 \lrar \P_0(w_0)$ induced by the $\P$-algebra structure of $\P_0$. Proposition~\ref{p:compute-1} now tells us that the map~\eqref{e:Rn} is obtained by levelwise applying (the augmented version of) $\L$ to the map~\eqref{e:box}, and is hence a stable weak equivalence and a levelwise cofibration, as desired.
\end{proof}

To deduce Proposition~\ref{p:comp} from Proposition~\ref{p:filtration-triv} we will need the following result: 
\begin{pro}\label{p:ps}
Let $\M$ be a differentiable, combinatorial, SM model category and let $f: \P \lrar \Q$ be a map of $\Sig$-cofibrant admissible $W$-colored operads in $\M$ which is an isomorphism on $0$-ary operations. Suppose that either $\M$ is right proper or that $\P_0 \cong \Q_0$ is fibrant. Then $ f^*_{\aug}: \Alg_{\Q}^{\aug} \lrar \Alg_{\P}^{\aug} $
preserves and detects weak equivalences as well as sifted homotopy colimits. Furthermore, the Quillen adjunction $f_!^{\aug} \dashv f^*_{\aug}$ is differentiable and induces a monadic adjunction of $\infty$-categories
\begin{equation}\label{e:monadic} 
\xymatrix{
(f^{\aug}_{!})_{\infty}: (\Alg_{\P}^{\aug})_{\infty}\ar@<1ex>[r] & (\Alg_{\Q}^{\aug})_{\infty} \ar@<1ex>[l]_-{\upvdash} \\
}:(f_{\aug}^{*})_{\infty} .
\end{equation}
\end{pro}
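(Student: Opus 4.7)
The plan is to reduce everything to properties of the underlying forgetful functor $U_{\P}: \Alg_{\P} \lrar \M^W$, using that weak equivalences, fibrations, and sifted homotopy colimits in $\Alg_{\P}$ are all controlled by $U_{\P}$ when $\P$ is $\Sig$-cofibrant and admissible, and then to invoke Lurie's $\infty$-categorical monadicity theorem at the end.

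First I would handle preservation and detection of weak equivalences. The model structure on $\Alg_{\P}^{\aug}$ is the slice of the transferred model structure on $\Alg_{\P}$, so a map is a weak equivalence if and only if its image in $\M^W$ is. Since $f^*$ (and hence $f^*_{\aug}$) does not alter the underlying object in $\M^W$, it preserves and detects weak equivalences at the point-set level, and therefore in the derived sense as well. In particular $\RR f^*_{\aug}$ may be computed without fibrant replacement.

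Next I would deal with sifted homotopy colimits and differentiability. The key input, essentially contained in~\cite{PS14}, is that for a $\Sig$-cofibrant admissible operad $\P$ in a differentiable SM model category $\M$, the forgetful functor $U_{\P}: \Alg_{\P} \lrar \M^W$ preserves sifted homotopy colimits (and detects them, since it detects weak equivalences). Since $U_{\P}\circ f^* = U_{\Q}$, the same holds for $f^*$, and passage to augmented algebras is harmless because the augmentation $A\lrar \P_0$ can be fibrantly replaced and slice categories over a fibrant object compute homotopy colimits in the base. Differentiability of $f^*_{\aug}$ is then an immediate specialization: sequential homotopy colimits are sifted, so they are preserved by $U_{\P}$ and $U_{\Q}$, and since $\M^W$ is differentiable they commute with the relevant finite homotopy limits; this commutation transports back to $\Alg_{\P}^{\aug}$ and $\Alg_{\Q}^{\aug}$ via the conservative sifted-colimit-preserving forgetful functors.

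Finally, the monadicity of the induced adjunction~\eqref{e:monadic} follows from the $\infty$-categorical Barr--Beck theorem of Lurie. The right adjoint $(f^*_{\aug})_{\infty}$ is conservative by the first step, and preserves geometric realizations (a particular class of sifted colimits) by the second; the left adjoint $(f^{\aug}_!)_{\infty}$ exists because $f^{\aug}_!$ is a left Quillen functor. These are precisely the hypotheses of Lurie's $\infty$-categorical monadicity theorem. The one non-formal step is the input from~\cite{PS14} on sifted homotopy colimits in algebras over $\Sig$-cofibrant admissible colored operads; everything else is a reasonably direct consequence of the definitions and standard slice-category manipulations.
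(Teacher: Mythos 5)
Your proposal follows essentially the same route as the paper's proof: you derive preservation and detection of weak equivalences from the transferred model structures, reduce the statement about sifted homotopy colimits to the Pavlov--Scholbach result for the underlying forgetful functors to $\M^W$, deduce differentiability by specializing to sequential diagrams, and conclude monadicity via Lurie's $\infty$-categorical Barr--Beck theorem. The only place you are slightly loose is the passage to augmented algebras: the paper just observes that the forgetful functors $\Alg_\P^{\aug} \lrar \Alg_\P$ preserve and detect homotopy colimits over weakly contractible (in particular sifted) index categories, whereas your remark about fibrantly replacing $\P_0$ is an unnecessary detour and not quite what is actually needed.
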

\begin{proof}
Since the model structures on both $\Alg_{\P}$ and $\Alg_{\Q}$ are transferred from $\M^W$ and since $f^*\P_0 \cong \Q_0$ by assumption we see that $f^*_{\aug}$ preserves and detects weak equivalences. By Proposition~\ref{p:sifted} both forgetful functors $\Alg_{\P} \lrar \M^W$ and $\Alg_{\Q} \lrar \M^W$ preserve (and hence also detect, since they detect weak equivalences) sifted homotopy colimits. Since the forgetful functors $\Alg_{\P}^{\aug} \lrar \Alg_\P$ and $\Alg_{\Q}^{\aug} \lrar \Alg_\Q$ are left Quillen functors which detect weak equivalences they preserve and detect homotopy colimits. We may hence conclude that $f^*_{\aug}$ preserves and detects sifted homotopy colimits. By the $\infty$-categorical Barr-Beck theorem (see~\cite[Theorem 4.7.4.5]{Lur14}), and using the fact that $\infty$-categorical sifted colimits can be computed as sifted homotopy colimits (see the Conventions and Notations at the end of \S\ref{s:intro}) we may conclude that~\eqref{e:monadic} is monadic.

Since sequential diagrams are in particular sifted we have by the same argument that sequential homotopy colimits in $\Alg_{\P}^{\aug}$ and $\Alg_{\Q}^{\aug}$ are preserved and detected in $\M$. In addition, since we assume that either $\M$ is right proper or that $\P_0 \cong \Q_0$ is fibrant we have that homotopy pullbacks in $\Alg_{\P}^{\aug}$ and $\Alg_{\Q}^{\aug}$ are preserved and detected in $\M$. Since $\M$ is differentiable it then follows that sequential homotopy colimits in $\Alg_{\P}^{\aug}$ and $\Alg_{\Q}^{\aug}$ commute with homotopy pullbacks. In addition, since the poset $\NN$ is weakly contractible it follows that sequential homotopy colimits always preserve final objects. It then follows that $\Alg_{\P}^{\aug}$ and $\Alg_{\Q}^{\aug}$ are differentiable. Furthermore, since sequential diagrams are in particular sifted we get from the above that $f_!^{\aug} \dashv f^*_{\aug}$ is a differentiable Quillen adjunction, as desired. 
\end{proof}

\begin{proof}[Proof of Proposition~\ref{p:comp}]
By Proposition~\ref{p:ps} the Quillen adjunction 
$$ \vphi^{\aug}_!: \Alg_{\P_{\leq 1}}^{\aug} \adj \Alg_{\P}^{\aug}: \vphi^*_{\aug} $$ 
is differentiable. Since $\vphi^*_{\aug}$ preserves weak equivalences \cite[Corollary 2.4.8]{part1} now implies that $\vphi^{\ast}_{\NN \times \NN}: (\Alg_{\P}^{\aug})^{\NN\times\NN} \lrar (\Alg_{\P_{\leq 1}}^{\aug})^{\NN\times\NN}$ preserves stable weak equivalences between pre-spectra. This proves Claim (1).

To prove (2), note that the collection of pre-spectra and the collection of stable weak equivalences are both closed under levelwise homotopy colimits of $(\NN \times \NN)$-diagrams. Note that $\vphi^{\ast}_{\NN \times \NN}$ preserves levelwise weak equivalences and by Proposition~\ref{p:ps} also sifted levelwise homotopy colimits. This means that the collection of levelwise cofibrant pre-spectra $X_{\bullet\bullet} \in \left(\Alg_{\P_{\leq 1}}^{\aug}\right)^{\NN \times \NN}$ for which the unit 
$$ u_X: X_{\bullet\bullet}\lrar \vphi^*_{\NN \times \NN}\vphi_!^{\NN \times \NN}X_{\bullet\bullet} $$
is a stable weak equivalence is closed under sifted levelwise homotopy colimits in $\left(\Alg_{\P_{\leq 1}}^{\aug}\right)^{\NN \times \NN}$. 

Now Proposition~\ref{p:ps} implies that the free-forgetful adjunction
\begin{equation}\label{e:adj-O-P1}
\xymatrix{
\left(\Alg_{\O}^{\aug}\right)^{\NN \times \NN}_{\infty}\ar@<1ex>[r] & \left(\Alg_{\P_{\leq 1}}^{\aug}\right)^{\NN \times \NN}_{\infty} \ar@<1ex>[l]_-{\upvdash}}
\end{equation}
is a monadic adjunction of $\infty$-categories. We note that both functors in this adjunction preserves pre-spectrum objects. Since the collection of pre-spectrum objects is closed under homotopy colimits it follows that~\eqref{e:adj-O-P1} induces a monadic adjunction on the corresponding full subcategories spanned by pre-spectra. Consequently, every pre-spectrum in $\big(\Alg_{\P_{\leq 1}}^{\aug}\big)^{\NN \times \NN}_\infty$ can be obtained as the $\infty$-categorical colimit of a sifted diagram in the image of the pre-spectra in $\big(\Alg_{\O}^{\aug}\big)^{\NN \times \NN}_{\infty}$. By rectifying these diagrams (see the Conventions and Notations at the end of \S\ref{s:intro}), we obtain the following result at the model-categorical level: every pre-spectrum object of $\big(\Alg_{\P_{\leq 1}}^{\aug}\big)^{\NN \times \NN}$ can be written as a sifted levelwise homotopy colimit of pre-spectra weakly equivalent to $\P_{\leq 1} \circ_{\O} Y_{\bullet\bullet}$ for $Y_{\bullet\bullet}: \NN \times \NN\lrar \Alg_{\O}^{\aug}$ a pre-spectrum object. It will hence suffice to show that 
$$ u_{\P_{\leq 1} \circ_{\O} Y_{\bullet\bullet}} : \P_{\leq 1} \circ_{\O} Y_{\bullet\bullet} \lrar \vphi^*_{\NN \times \NN}\vphi_!^{\NN \times \NN}(\P_{\leq 1} \circ_{\O} Y_{\bullet\bullet}) \cong \P \circ_{\P_{\leq 1}}\P_{\leq 1} \circ_{\O} Y_{\bullet\bullet} \cong \P \circ_{\O} Y_{\bullet\bullet} $$
is a stable weak equivalence for every levelwise cofibrant pre-spectrum $Y_{\bullet\bullet}: \NN\times\NN\lrar \Alg_{\O}^{\aug}$. But this is exactly the content of Proposition~\ref{p:filtration-triv}, and so the proof is complete.
\end{proof}

\subsection{Tangent categories of algebras and modules}\label{s:tangent}
Our goal in this section is to explain how, under suitable conditions, Theorem~\ref{t:comp} can be used to identify the tangent category $\T_A\Alg_{\P}$ at a $\P$-algebra $A$ with the stabilization of a certain module category, or alternatively, as a suitable category of enriched lifts. To this end, we will follow the outline described in the introduction. Recall that associated to a $\P$-algebra $A$ is its enveloping operad $\P^A$ (see~\S\ref{ss:prelim}), whose characteristic property is a natural equivalence of categories $\Alg_{\P^A} \simeq (\Alg_{\P})_{A/}$. Under this equivalence, the identity map $A \lrar A$ exhibits $A$ as the initial $\P^A$-algebra, so that $\Alg_{\P^A}^{\aug} \simeq (\Alg_{\P})_{A//A}$. We may hence write the tangent model category at $A$ as
$\T_A\Alg_{\P} \simeq \Sp(\Alg_{\P^A}^{\aug})$. Under the conditions of Theorem~\ref{t:comp} we now obtain a right Quillen equivalence $\T_A\Alg_{\P}\x{\sim}{\lrar} \Sp(\Alg_{\P^A_{\leq 1}}^{\aug})$. The category $\Alg_{\P^A_{\leq 1}}$ is just the category $(\Mod^{\P}_A)_{A/}$ of $A$-modules in $\M$ under $A$ (see Remark \ref{r:p01}). This leads to the following corollary:

\begin{cor}\label{c:sum-comparison}
Let $\M$ be a differentiable, left proper, combinatorial SM model category and let $\P$ be an admissible $\Sig$-cofibrant operad. Let $A$ be a $\P$-algebra and assume either that $A$ is fibrant or that $\M$ is right proper. In addition, assume that \textbf{at least one} of the following conditions holds:
\begin{enumerate}[(1)]
\item
$\Alg_\P$ is left proper and $\P^A$ is $\Sig$-cofibrant.
\item
$A$ is cofibrant and the stable model structure $\Sp((\Alg_\P)_{A//A})$ exists.
\end{enumerate}
Then restriction along $\vphi: \P^A_{\leq 1}\lrar \P^A$ yields a right Quillen equivalence
\begin{equation}\label{e:equiv-tangent}
\xymatrix@R=0em{
\vphi^*_{\Sp}: \T_A\Alg_\P \simeq \Sp(\Alg_{\P^A}^{\aug})
\ar^-{\sim}[r]  &  
\Sp(\Alg^{\aug}_{(\P^A)_{\leq 1}}) \simeq   \T_A\Mod^\P_A.
} 
\end{equation} 
\end{cor}
\begin{proof}
We apply Theorem~\ref{t:comp} to the operad $\P^A$. For this we need to check that in both cases (1) and (2) the operad $\P^A$ is stably admissible and $\Sig$-cofibrant. In case (1) the stable model structure $\Sp((\Alg_\P)_{A//A})$ exists because $\Alg_\P$ is left proper and in case (2) it is simply assumed. Similarly, in case (1) we assume that $\P^A$ is $\Sig$-cofibrant. It will hence suffice to check that $\P^A$ is also $\Sig$-cofibrant under the conditions of (2), which holds by~\cite[Proposition 2.3]{BM09}.
\end{proof}

We may remove the conditions that $\M$ is left proper and that the stable model structure $\Sp((\Alg_\P)_{A//A})$ exists and instead consider the relative category $\T_A'\Alg_{\P} = \Sp'((\Alg_{\P})_{A//A})$ from Definition \ref{d:sp'} (see~\cite[Remark 3.3.4]{part1}). Replacing Theorem~\ref{t:comp} by Corollary~\ref{c:compmodel}, we then obtain the following: 

\begin{cor}\label{c:sum-comparison-model}
Let $\M$ be a differentiable, combinatorial SM model category and let $\P$ be an admissible $\Sig$-cofibrant operad. Let $A$ be a $\P$-algebra and assume either that $A$ is fibrant or that $\M$ is right proper. In addition, assume that one of the following conditions holds:
\begin{enumerate}[(1)]
\item
$\P^A$ is $\Sig$-cofibrant.
\item
$A$ is cofibrant.
\end{enumerate}
Then restriction along $\vphi: \P^A_{\leq 1}\lrar \P^A$ yields an equivalence of relative categories
$$
\vcenter{\xymatrix{
\vphi^*_{\Sp}: \T_A'\Alg_{\P} = \Sp'(\Alg_{\P^A}^{\aug}) \ar^-{\sim}[r]  &  
 \Sp'(\Alg^{\aug}_{(\P^A)_{\leq 1}}) = \T_A'\Mod^\P_A.
}} 
$$
\end{cor}

\begin{rem}\label{r:fresse-2}
When every object in $\M$ is cofibrant and $\P$ is a cofibrant single colored operad (with respect to the transferred model structure on operads in $\M$), then $\Alg_\P$ is left proper and $\P^A$ is $\Sig$-cofibrant for every $\P$-algebra $A$ \cite{fresse} (see also Remark~\ref{r:fresse}). This is also true when $\P$ is a cofibrant colored operad and $\M$ is the model category of simplicial sets by \cite{rezk}.
\end{rem}

We may use Remark~\ref{r:global-tensored} to rewrite the right hand side of~\eqref{e:equiv-tangent} as the full subcategory $\Fun^{\M}_{/\M}(\P^A_1,\T\M) \subseteq \Fun^{\M}(\P^A_1,\T\M)$ consisting of those enriched functors $\F: \P^A_1 \lrar \T\M$ which lie above the functor $\P^A_0: \P^A_1 \lrar \M$.
\begin{cor}\label{c:sum-comparison-2}
Let $\M, \P$ and $A$ be as in Corollary~\ref{c:sum-comparison} and assume in addition that $\M$ is tractable. Then we have a natural right Quillen equivalence
$$ \T_A\Alg_\P \x{\sim}{\lrar} \Fun^{\M}_{/\M}(\P^A_1,\T\M)  .$$
\end{cor}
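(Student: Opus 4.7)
The strategy is to assemble the corollary by chaining two earlier results of the paper. First, Corollary~\ref{c:sum-comparison} provides a right Quillen equivalence
$$ \T_A\Alg_\P \x{\simeq}{\lrar} \T_A\Mod^\P_A $$
induced by restriction along $\vphi: \P^A_{\leq 1} \lrar \P^A$. The content still to be supplied is an identification of the tangent model category at $A$ of the module category $\Mod^\P_A$ with the category of $\M$-enriched lifts $\Fun^{\M}_{/\M}(\P^A_1, \T\M)$.

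To produce this second identification, the plan is to apply the fibrewise consequence of Proposition~\ref{c:tangent-functor} recorded in Remark~\ref{r:global-tensored}. By Definition~\ref{d:module} and Example~\ref{e:p1}, an $A$-module is exactly an $\M$-enriched functor out of the enveloping category $\P^A_1$, so that one has an identification of model categories $\Mod^\P_A \cong \Fun^{\M}(\P^A_1, \M)$ (with the projective model structure on the right, which matches the transferred one on the left). Under this identification the module $A$ itself corresponds to the enriched functor $\P^A_1 \lrar \M$ underlying $A$ viewed as the initial $\P^A$-algebra. Applying Remark~\ref{r:global-tensored} with $\bS = \M$, $\I = \P^A_1$, and $\F$ this enriched functor then produces a canonical equivalence
$$ \T_A\Mod^\P_A = \Sp\bigl(\Fun^{\M}(\P^A_1,\M)_{A//A}\bigr) \x{\cong}{\lrar} \Fun^{\M}_{/\M}(\P^A_1, \T\M), $$
where the right-hand side is given its projective (equivalently, transported) model structure from loc.\ cit. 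Composing this isomorphism of model categories with the Quillen equivalence of Corollary~\ref{c:sum-comparison} yields the claimed right Quillen equivalence.

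The only real work lies in bookkeeping rather than in any substantial new argument: one must track carefully that the various model structures in play (the tangent model structure on $\Mod^\P_A$, the cosliced projective model structure on $\Fun^{\M}(\P^A_1, \M)_{A//A}$, and the transported projective structure on the category of enriched lifts) all coincide under the identifications used, and that the distinguished object $A$ on the algebra side does correspond, through Remark~\ref{r:p01}, to the initial $\P^A$-algebra and hence to the base functor over which the lifts are taken. Once these compatibilities are spelled out---all of which are already recorded in \S\ref{s:functor} and \S\ref{ss:prelim}---the proof reduces to citing Corollary~\ref{c:sum-comparison} and Remark~\ref{r:global-tensored} in sequence.
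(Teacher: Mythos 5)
Your proof is correct and follows exactly the same route as the paper: invoke Corollary~\ref{c:sum-comparison} to pass from $\T_A\Alg_\P$ to $\T_A\Mod^\P_A$, identify $\Mod^\P_A$ with $\Fun^{\M}(\P^A_1,\M)$ via Remark~\ref{r:p01} and Definition~\ref{d:module}, and then apply Remark~\ref{r:global-tensored} to rewrite $\Sp(\Fun^{\M}(\P^A_1,\M)_{A//A})$ as the category of enriched lifts $\Fun^{\M}_{/\M}(\P^A_1,\T\M)$. The bookkeeping points you flag (matching of the projective/transferred/cosliced model structures, and the base functor being the underlying module of $A$) are precisely what the paper relies on as well.
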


When $\M$ is stable and strictly pointed the situation simplifies. 
\begin{cor}\label{c:sum-comparison-stable}
Let $\M, \P$ and $A$ be as in Corollary~\ref{c:sum-comparison} and assume in addition that $\M$ is stable and strictly pointed. Let $\K: \Alg^\P_{A//A} \lrar (\Mod^\P_A)_{A//A} \x{\ker}{\lrar} \Mod^\P_A$ be the composition of the forgetful functor and the kernel functor appearing in Lemma~\ref{l:ker-stable}. Then the functors
$$ \xymatrix{
\T_A\Alg_\P \ar[r]^-{\K_{\Sp}}_-{\sim} & \Sp(\Mod^{\P}_A) \ar[r]^-{\Om^{\infty}}_-{\sim} & \Mod^\P_A \\
}$$
are right Quillen equivalences.
\end{cor}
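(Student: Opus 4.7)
The plan is to verify that each of the two displayed arrows is a right Quillen equivalence, using only the results already proved in the paper. By construction, $\K$ is the composite
\[
(\Alg_\P)_{A//A} \xrightarrow{\vphi^*_{\aug}} (\Mod^\P_A)_{A//A} \xrightarrow{\ker} \Mod^\P_A,
\]
where the first map is the forgetful functor coming (under the identifications of \S\ref{ss:prelim}) from the operad map $\vphi\colon \P^A_{\leq 1}\lrar \P^A$, and the second is the kernel functor of Lemma \ref{l:ker-stable}. Applying the $(\NN\times\NN)$-diagrammatic construction $\Sp(-)$ levelwise, $\K_{\Sp}$ factors correspondingly as $\ker_{\Sp} \circ \vphi^*_{\Sp}\colon \T_A\Alg_\P \lrar \T_A\Mod^\P_A \lrar \Sp(\Mod^\P_A)$.

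For the first factor, Corollary \ref{c:sum-comparison} directly provides that $\vphi^*_{\Sp}$ is a right Quillen equivalence. For the second, I would first observe that $\Mod^\P_A \cong \Fun^{\M}(\P^A_1,\M)$ with the projective model structure is stable and strictly pointed, since weak equivalences, fibrations, finite (co)limits, and the loop-suspension adjunction are all computed objectwise in the stable strictly pointed category $\M$. Lemma \ref{l:ker-stable}, applied to $\Mod^\P_A$ in place of $\M$ and to the object $A$, then yields that $\ker\colon (\Mod^\P_A)_{A//A} \lrar \Mod^\P_A$ is a right Quillen equivalence. Since the construction $\Sp(-)$ of \S\ref{s:recall} proceeds by passing to a levelwise projective model structure on $(\NN\times\NN)$-diagrams and then left Bousfield localizing, it carries right Quillen equivalences between left proper combinatorial pointed model categories to right Quillen equivalences. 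Hence $\ker_{\Sp}$, and therefore the composite $\K_{\Sp}$, is a right Quillen equivalence.

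Finally, since $\Mod^\P_A$ is itself stable and strictly pointed, one may apply Corollary \ref{c:stable-cotangent} with zero object $A = 0$ (so that $(\Mod^\P_A)_{0//0}$ coincides with $\Mod^\P_A$); this identifies $\Sp(\Mod^\P_A) \simeq \Mod^\P_A$ via $\Om^\infty$, so the right-hand arrow $\Om^\infty$ is also a right Quillen equivalence. The main item to check is essentially bookkeeping: that $\Mod^\P_A$ inherits the properness and combinatoriality hypotheses needed to invoke the preceding corollaries, and that the factorization of $\K$ is compatible with the levelwise construction of $\Sp(-)$. Given the setup of Corollary \ref{c:sum-comparison}, this presents no substantive obstacle.
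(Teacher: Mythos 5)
Your proof is correct and follows essentially the same route the paper intends: factor $\K_{\Sp}$ through $\T_A\Mod^\P_A$ using $\vphi^*_{\Sp}$ (which Corollary~\ref{c:sum-comparison} handles) and $\ker_{\Sp}$ (which follows from Lemma~\ref{l:ker-stable} together with stability of $\Mod^\P_A$ as recorded in Remark~\ref{r:stablemodules}), and then use stability of $\Mod^\P_A$ again to collapse $\Om^\infty$. The invocation of Corollary~\ref{c:stable-cotangent} with $A=0$ is a slightly indirect way to cite the fact that $\Om^\infty:\Sp(\N)\to\N$ is a Quillen equivalence when $\N$ is stable; the cleaner reference is the underlying result from~\cite[Corollary 3.3.3]{part1} quoted just before Lemma~\ref{l:ker-stable}, but the content is the same.
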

\begin{proof}
The category $\Mod^{\P}_A(\M) \simeq \Fun^{\M}(\P^A_1,\M)$, endowed with the projective model structure, is stable, strictly pointed and left proper, because $\M$ has these properties and $A$ is cofibrant in $\M$ (by either Condition (1) or (2) of Corollary~\ref{c:sum-comparison}).
In particular, $\Om^{\infty}: \Sp(\Mod^\P_A) \lrar \Mod^\P_A$ is a right Quillen equivalence (see \cite[Corollary 3.3.3]{part1}). If $\M$ is right proper then $\Mod^\P_A(\M)$ is right proper and if $A$ is fibrant as an algebra then $A$ is fibrant as an $A$-module. Lemma~\ref{l:ker-stable} then implies that $\ker:\Mod^{\P}_A(\M)_{A//A} \x{\sim}{\lrar} \Mod^{\P}_A(\M)$ is a right Quillen equivalence. Combining this with Corollary~\ref{c:sum-comparison} we may now conclude that $\K_{\Sp}$ is a right Quillen equivalence. 
\end{proof}

\subsection{The $\infty$-categorical comparison}\label{s:infinity}
Our goal in this section is to formulate and prove an $\infty$-categorical counterpart of Corollary~\ref{c:sum-comparison}. For this it will be useful to consider another approach for the theory of modules, where one considers the collection of pairs $(A,M)$ of a $\P$-algebra $A$ and an $A$-module $M$ as algebras over another operad $\M\P$. We shall henceforth follow the approach of~\cite{Hin15}. Let $\Com$ be the commutative operad and let $\M\Com$ be the operad for commutative algebras and modules over them (see Example \ref{e:basic-1}(\ref*{e:basic-1mcom})). There are natural maps $\Com \lrar \M\Com \lrar \Com$ where the first one sends the only object of $\Com$ to $a$ and the second is the terminal map. Restriction along $\Com \lrar \M\Com$ induces the projection $(A,M) \mapsto A$ which forgets the module.

Given a simplicial operad $\P$ we will denote by 
$$ \M\P := \M\Com \times_{\Com} \P $$
the associated fiber product in the category of simplicial operads. If $\C$ is a simplicial model category, then an $\M\P$-algebra in $\C$ is the same as a pair $(A,M)$ where $A$ is a $\P$-algebra in $\C$ and $M$ is an $A$-module.

We will denote by $\M\Com^{\otimes} := \rN^{\otimes}(\M\Com)$ the operadic nerve of $\M\Com$. Given an $\infty$-operad $\O^{\otimes}$ we will denote by
$$ \M\O^{\otimes} := \M\Com^{\otimes} \times_{\Com^{\otimes}} \O^{\otimes} $$
the associated (homotopy) fiber product in the model category of pre-operads. Since the operadic nerve preserves fiber products we have that if $\P$ is a simplicial operad then $\rN^{\otimes}(\M\P) \cong \M\rN^{\otimes}(\P)$.
\begin{define}[{\cite[Def. 5.2.1]{Hin15}}]
Let $\O^{\otimes}$ be an $\infty$-operad and $\C^{\otimes}$ a symmetric monoidal $\infty$-category. Let $A \in \Alg_{\O}(\C)$ be an $\O$-algebra object in $\C$. The $\infty$-category $\Mod^{\O}_A(\C)$ is defined as the fiber product
$$ \Mod^{\O}_A(\C) := \Alg_{\M\O}(\C) \times_{\Alg_{\O}(\C)}\{A\}. $$
\end{define}

We will refer to $\Mod^{\O}_A(\C)$ as the $\infty$-category of $A$-modules in $\C$. When the $\infty$-operad $\O$ is unital and coherent, Proposition B.1.2 in \cite{Hin15} establishes a natural equivalence of $\infty$-categories from $\Mod^{\O}_A(\C)$ to the underlying $\infty$-category of the $\O$-monoidal $\infty$-category $\Mod^{\O}_A(\C)^\otimes$ of $A$-modules of \cite[\S 3.3.3]{Lur14}. Furthermore, the following variation on the arguments of \cite{Hin15} shows how such $A$-modules in the $\infty$-categorical sense can be strictified.

\begin{pro}\label{p:hin}
Let $\M$ be a combinatorial simplicial SM model category and let $\P$ be a $\Sig$-cofibrant admissible simplicial operad such that $\M\P$ is admissible as well. For any cofibrant $A$ in $\Alg_{\P}(\M)$, there is an equivalence of $\infty$-categories
$$\xymatrix{
\Mod_A^{\P}(\M)_\infty \ar[r]^-\sim & \Mod_A^{\rN(\P)}(\M_\infty).
}$$
\end{pro}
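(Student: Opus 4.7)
The plan is to deduce the equivalence from Hinich's fiber-product definition together with a rectification theorem. By definition,
\[ \Mod_A^{\rN(\P)}(\M_\infty) = \Alg_{\M\rN(\P)}(\M_\infty) \times_{\Alg_{\rN(\P)}(\M_\infty)} \{A\}, \]
and since the operadic nerve preserves fiber products, $\M\rN(\P) \simeq \rN(\M\P)$. On the model-categorical side, an $\M\P$-algebra in $\M$ is precisely a pair $(B, N)$ consisting of a $\P$-algebra $B$ and a $B$-module $N$, and the restriction functor $U\colon\Alg_{\M\P}(\M) \to \Alg_{\P}(\M)$ induced by $\P \to \M\P$ corresponds to the projection $(B,N)\mapsto B$. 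The strict $1$-categorical fiber of $U$ over $A$ is therefore exactly $\Mod_A^{\P}(\M)$.

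The first step is to apply the rectification results of Pavlov--Scholbach (and, where applicable, Nikolaus--Sagave) to obtain natural equivalences $\Alg_{\P}(\M)_\infty \simeq \Alg_{\rN(\P)}(\M_\infty)$ and $\Alg_{\M\P}(\M)_\infty \simeq \Alg_{\rN(\M\P)}(\M_\infty)$. These are compatible with the restriction functors induced by $\P \to \M\P$, producing a commutative square of $\infty$-categories. Combined with the identification $\rN^{\otimes}(\M\P) \simeq \M\rN^{\otimes}(\P)$, this identifies $U_\infty$ with the Hinich restriction functor on the target side. Some care is required to verify that $\M\P$ satisfies the hypotheses for rectification: admissibility is assumed, while the appropriate cofibrancy conditions should be deducible from the $\Sigma$-cofibrancy of $\P$, using that the extra colour structure comes from $\M\Com \to \Com$, whose fiber-relevant operations behave well under the base change to $\P$.

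The second step, which I expect to be the main obstacle, is to verify that the underlying $\infty$-category $\Mod_A^{\P}(\M)_\infty$ of the strict fiber models the $\infty$-categorical homotopy fiber of $U_\infty$ over $A$. Here I would argue as follows: $U$ is a right Quillen functor that preserves and reflects all weak equivalences (both model structures are transferred from products of copies of $\M$), its strict $1$-categorical fibers carry the transferred module model structures, and the cofibrancy of $A$ makes the pullback along $\{A\}\hookrightarrow \Alg_{\P}(\M)$ homotopy invariant. Combining these inputs one identifies $U_\infty$ with a presentable fibration whose fiber over $A$ is modelled precisely by $\Mod_A^{\P}(\M)$. Evaluating the commutative square of rectification equivalences at $A$ then yields the claimed equivalence of $\infty$-categories.
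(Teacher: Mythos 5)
Your first step is correct and matches the paper's: both set up the rectification square via~\cite{PS14} and use the compatibility of the operadic nerve with the fiber product $\M\P = \M\Com\times_{\Com}\P$. The admissibility of $\M\P$ is assumed in the statement, and the paper observes directly that $\Sig$-cofibrancy of $\P$ carries over to $\M\P$, so the rectification hypotheses hold. That part is fine.

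The gap lies exactly where you flag ``the main obstacle.'' Saying that $U$ is right Quillen, preserves and reflects weak equivalences, and that the strict fiber carries a transferred model structure does \emph{not} by itself identify $\Mod^\P_A(\M)_\infty$ with the $\infty$-categorical homotopy fiber of $U_\infty$ over $A$. In general, localization does not commute with taking fibers of a functor, even for a nice Quillen functor. The sentence ``one identifies $U_\infty$ with a presentable fibration whose fiber over $A$ is modelled precisely by $\Mod^\P_A(\M)$'' is precisely the assertion that needs to be proved, and the inputs you list do not obviously deliver it. What the paper does at this point is to restrict to the relative subcategory $\Alg_{\M\P}(\M)'$ of pairs $(B,N)$ with $B$ cofibrant (so the comparison only needs to be made over the cofibrant locus, and $\Alg_{\M\P}(\M)'_\infty\simeq\Alg_{\M\P}(\M)_\infty$ by a deformation retract argument), view $\pi:\Alg_{\M\P}(\M)'\to \Alg_\P(\M)^{\cof}$ as a cartesian fibration of relative categories given by the Grothendieck construction of $B\mapsto\Mod^\P_B(\M)$, verify that this functor sends weak equivalences of cofibrant algebras to equivalences of relative categories using \cite[Theorem 2.6]{BM09}, and then invoke Hinich's localization criterion \cite[Proposition 2.1.4]{Hin}. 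That result is the specific technical device that guarantees strict fibers localize to homotopy fibers. Your remark that ``cofibrancy of $A$ makes the pullback homotopy invariant'' is gesturing at the BM09 input, but without it being explicit and without the Hinich criterion the argument does not close; as written it is a restatement of the goal rather than a proof of it.
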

 \begin{proof}
If $\P$ is $\Sig$-cofibrant and admissible, then the associated simplicial operad $\M\P$ is $\Sig$-cofibrant and admissible as well. By~\cite[Theorem 7.11]{PS18}, the map of operads $\P\lrar \M\P$, obtained as the base change of the map $\Com\lrar \M\Com$, induces a commuting square of $\infty$-categories
\begin{equation}\label{d:mods}\vcenter{\xymatrix@R=1.3pc@C=1.3pc{
\Alg_{\M\P}(\M)_\infty\ar[r]^-\sim\ar[d]_{p} & \Alg_{\M\rN(\P)}(\M_\infty)\ar[d]^q\\
\Alg_{\P}(\M)_\infty\ar[r]_-\sim & \Alg_{\rN(\P)}(\M_\infty),
}}\end{equation}
in which the horizontal maps are equivalences of $\infty$-categories. Now observe that the left vertical map $p$ of $\infty$-categories is obtained by localization from the functor of relative categories
$$
\pi :\Alg_{\M\P}(\M)'\lrar  \Alg_{\P}(\M)^{\cof},
$$
whose domain $\Alg_{\M\P}(\M)'$ is the relative subcategory of $\Alg_{\M\P}(\M)$ on those pairs $(A, M)$ of algebras and modules whose algebra $A$ is cofibrant. To see that the $\infty$-category $\Alg_{\M\P}(\M)'_\infty$ is equivalent to $\Alg_{\M\P}(\M)_\infty$, note that $\Alg_{\M\P}(\M)^{\cof}$ is a relative subcategory of $\Alg_{\M\P}(\M)'$ 
and that the inclusion $\Alg_{\M\P}(\M)^{\cof}\lrar \Alg_{\M\P}(\M)'$ is part of a left homotopy deformation retract, with retraction given by a cofibrant replacement functor in $\Alg_{\M\P}(\M)$.

We may now identify $\Alg_{\M\P}(\M)'$ with the Grothendieck construction of the functor $\Mod^{\P} : \left(\Alg_{\P}(\M)^{\cof}\right)^{\op}\lrar \RelCat$ sending a cofibrant $\P$-algebra $A$ to the relative category $\Mod_A^{\P}(\M)$ of $A$-modules and a map $f : A\lrar B$ of cofibrant $\P$-algebras to the restriction functor $f^*: \Mod^{\P}_B(\M)\lrar \Mod_A^{\P}(\M)$ between module categories. We note that the functor $\Mod^{\P}$ sends weak equivalences of cofibrant algebras to equivalences of relative categories by \cite[Theorem 2.6]{BM09}. We may hence apply~\cite[Proposition 2.1.4]{Hin} to the map $\pi^{\op}$ and deduce that for every cofibrant $\P$-algebra $A$ we have a chain of equivalences of $\infty$-categories
$$
\Mod_A^{\P}(\M)_\infty \simeq \pi^{-1}(A)_\infty  \x{\simeq}{\lrar} p^{-1}(A)  \x{\simeq}{\lrar} q^{-1}(A) \simeq\Mod^{\rN(\P)}_A(\M_\infty),
$$
where the second map is the induced map on fibers arising from \eqref{d:mods}, and thus an equivalence.
\end{proof}

\begin{thm}\label{t:compoo}
Let $\C$ be a closed SM, differentiable presentable $\infty$-category and let $\O^{\otimes} := \rN^{\otimes}(\P)$ be the operadic nerve of a fibrant simplicial operad and let $A \in \Alg_{\O}(\C)$ be an $\O$-algebra. Then the forgetful functor induces an equivalence of $\infty$-categories
$ \T_A\Alg_{\O}(\C) \x{\simeq}{\lrar} \T_{A}\Mod^{\O}_A(\C) $.
\end{thm}

\begin{proof}
Since weakly equivalent fibrant simplicial operads have equivalent associated $\infty$-operads, we may assume that $\P$ is $\Sig$-cofibrant. By \cite[Theorem 1.1]{NS} there exists a left proper, combinatorial simplicial SM model category $\M$ together with a symmetric monoidal equivalence of $\infty$-categories $(\M^\otimes)_\infty \simeq \C^\otimes$. Furthermore, $\M$ has the property that any simplicial operad is admissible \cite[Theorem 2.5]{NS}. Since $\C$ is assumed to be differentiable, the model category $\M$ is differentiable as well. By the rectification result of~\cite[Theorem 7.11]{PS18} the model category $\Alg_\P(\M)$ presents the $\infty$-category $\Alg_{\O}(\C)$. Let $\ovl{A}$ be a fibrant-cofibrant $\P$-algebra in $\M$ representing $A$. Then the slice-coslice model structure $(\Alg_\P(\M))_{\ovl{A}//\ovl{A}}$ presents the $\infty$-category $(\Alg_{\O}(\C))_{A//A}$ and by Proposition~\ref{p:hin} the transferred model structure on $\Mod_{\ovl{A}}^{\P}(\M)$ presents the $\infty$-category $\Mod_A^{\O}(\C)$. We note that since $\P$ is $\Sig$-cofibrant and $\ovl{A}$ is cofibrant, $\P^{\ovl{A}}$ and $\P^{\ovl{A}}_{1}$ are $\Sigma$-cofibrant by \cite[Proposition 2.3]{BM09}. This means, in particular, that the forgetful functor $\Mod^\P_{\ovl{A}} \lrar \M$ is a left Quillen functor and so $\Mod^\P_{\ovl{A}}$ inherits from $\M$ the property of being left proper (see Remark \ref{r:admissible}). In particular, the slice-coslice model structure $(\Mod^\P_{\ovl{A}}\M)_{\ovl{A}//\ovl{A}}$ presents the $\infty$-category $(\Mod^{\O}_{A}(\C))_{A//A}$.
Consider the commutative diagram of $\infty$-categories
$$\xymatrix@R=1.3pc@C=1.3pc{
\Sp'\left(\Alg_\P(\M)_{\ovl{A}//\ovl{A}}\right)_\infty \ar^{\simeq}[r] \ar[d] &\Sp\left((\Alg_\P(\M)_\infty)_{A//A}\right)\ar^{\simeq}[r]\ar[d]  &  \Sp\left(\Alg_{\O}(\C)_{A//A}\right)\ar[d]\\
\Sp'\left(\Mod_{\ovl{A}}^{\P}(\M)_{\ovl{A}//\ovl{A}}\right)_\infty \ar^{\simeq}[r] & \Sp\left((\Mod_{\ovl{A}}^{\P}(\M)_\infty)_{A//A}\right)\ar^{\simeq}[r] & \Sp\left(\Mod_A^{\O}(\C)_{A//A}\right),
}$$
where for a model category $\N$ we denote by $\Sp'(\N) \subseteq \N^{\NN \times \NN}$ the full relative subcategory spanned by the $\Om$-spectra (Definition \ref{d:sp'}). Now the horizontal maps in the right square are equivalences of $\infty$-categories by construction and 
the horizontal maps in the left square are equivalences by~\cite[Remark 3.3.4]{part1}. Finally, the left vertical map is an equivalence by Corollary~\ref{c:sum-comparison-model}. It then follows that the right vertical map is an equivalence, as desired.
\end{proof}
When $\C$ is stable, the $\infty$-category $\Mod_A^{\O}(\C)$ is stable and the kernel functor (cf. Lemma \ref{l:ker-stable}) yields an equivalence of $\infty$-categories $\T_A\Mod_A^{\O}(\C) \simeq \Mod_A^{\O}(\C)$ for every $A$. In this case the conclusion of Theorem~\ref{t:compoo} reduces to the following generalization of~\cite[Theorem 7.3.4.13]{Lur14} to the case of $\infty$-operads which are not necessarily unital or coherent (but which do arise as nerves of simplicial operads):
\begin{cor}
Let $\C$ be a closed SM, \textbf{stable} presentable $\infty$-category and let $\O^{\otimes} := \rN^{\otimes}(\P)$ be the operadic nerve of a fibrant simplicial operad. Then the functor $\ker: \Alg_{\O}(\C)_{A//A} \lrar  \Mod^{\O}_A(\C)$ induces an equivalence of $\infty$-categories
$$ \T_A\Alg_{\O}(\C) \x{\simeq}{\lrar} \Mod^{\O}_A(\C) .$$
\end{cor}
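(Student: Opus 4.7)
The plan is to deduce this corollary by combining Theorem~\ref{t:compoo} with the general fact that for a stable $\infty$-category the tangent $\infty$-category at any object is equivalent to the category itself. No new operadic input is needed beyond what has already been established.

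First, I would apply Theorem~\ref{t:compoo} directly to obtain the equivalence
$$ \T_A\Alg_{\O}(\C) \x{\simeq}{\lrar} \T_A\Mod^{\O}_A(\C) $$
induced by the forgetful functor $\Alg_{\O}(\C)_{A//A} \lrar \Mod^{\O}_A(\C)_{A//A}$ followed by stabilization. Next I would verify that $\Mod^{\O}_A(\C)$ is itself a stable presentable $\infty$-category. This is the $\infty$-categorical counterpart of Remark~\ref{r:stablemodules}: the forgetful functor $\Mod^{\O}_A(\C) \lrar \C^W$ (where $W$ is the colour set of $\O$) preserves and detects limits and colimits by the arguments of~\cite{PS14} recalled in Proposition~\ref{p:ps}, so stability and presentability of $\C$ transfer to $\Mod^{\O}_A(\C)$.

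Second, I would invoke the $\infty$-categorical analogue of Lemma~\ref{l:ker-stable}: for any stable presentable $\infty$-category $\D$ and any object $B \in \D$, the zero object of $\D_{B//B}$ together with stability implies that $\D_{B//B}$ is already stable, and the kernel functor
$$ \ker : \D_{B//B} \x{\simeq}{\lrar} \D $$
is an equivalence (this is the homotopy-Cartesian/coCartesian pasting argument of Lemma~\ref{l:ker-stable}, which in the $\infty$-categorical setting follows immediately from the biCartesian-square characterisation of stability). Consequently $\T_B\D = \Sp(\D_{B//B}) \simeq \D_{B//B} \simeq \D$, the first equivalence being the fact that stabilisation is the identity on a stable presentable $\infty$-category. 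Applying this with $\D = \Mod^{\O}_A(\C)$ and $B = A$ yields $\T_A\Mod^{\O}_A(\C) \simeq \Mod^{\O}_A(\C)$, with the equivalence realised by $\ker$.

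Composing the two equivalences gives the required $\T_A\Alg_{\O}(\C) \x{\simeq}{\lrar} \Mod^{\O}_A(\C)$, and chasing through the constructions one sees that this composite is precisely the stabilisation of $\ker \circ (\text{forget}) : \Alg_{\O}(\C)_{A//A} \lrar \Mod^{\O}_A(\C)$, which is the functor advertised in the statement. The only potentially delicate step is the verification that the composite is induced by the named functor $\ker$; this is a bookkeeping matter, since the forgetful functor is the one appearing in Theorem~\ref{t:compoo} and the kernel functor is exactly the inverse equivalence coming from stability. There is no substantive obstacle—the entire content of the corollary has been packaged into Theorem~\ref{t:compoo} and the stable-case simplification of the tangent bundle.
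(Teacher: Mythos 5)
Your proposal is correct and follows essentially the same route as the paper: apply Theorem~\ref{t:compoo}, observe that $\Mod^{\O}_A(\C)$ is stable, and then use the kernel-functor equivalence of Lemma~\ref{l:ker-stable} to identify $\T_A\Mod^{\O}_A(\C)$ with $\Mod^{\O}_A(\C)$. The only (cosmetic) difference is in how stability of $\Mod^{\O}_A(\C)$ is established: the paper simply cites Remark~\ref{r:stablemodules}, which reduces to the observation that $\Mod^{\P}_A(\M)\cong\Fun^{\M}(\P^A_1,\M)$ is stable whenever $\M$ is, whereas you argue directly at the $\infty$-categorical level via the conservative, (co)limit-preserving forgetful functor $\Mod^{\O}_A(\C)\lrar\C^W$. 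Both arguments are fine, and your concluding remark about the composite being the advertised kernel functor is exactly the bookkeeping the paper leaves implicit.
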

Note that a stable presentable $\infty$-category $\C$ is always differentiable since $\colim: \C^{\NN}\lrar \C$ is an exact functor \cite[Example 6.1.1.7]{Lur14}.
\begin{rem}\label{r:rectalg}
It seems likely that Theorem \ref{t:compoo} admits a generalization to the case of enriched $\infty$-operads, whose theory has been developed in \cite{CH17}. By \cite[Corollary 5.1.9]{CH17}, the $\infty$-category of $W$-colored $\infty$-operads enriched in a SM $\infty$-category $\C$ is equivalent to the $\infty$-category of algebras in $\C$ over a certain operad $\Op_W$ (in sets!). In particular, if $\M$ is a SM model category, then every $W$-colored $\M$-enriched operad determines a $W$-colored $\infty$-operad enriched in $\M_\infty$. Conversely, every $\M_\infty$-enriched $\infty$-operad can be rectified to an $\M$-enriched operad when $\M$ is sufficiently nice \cite[Corollary 5.2.7]{CH17}.

To generalize Theorem \ref{t:compoo} to this enriched setting, one needs to generalize these results to algebras over operads as well. This can likely be done using an argument analogous to that of Proposition~\ref{p:hin}: there is an operad $\Q$ (in sets) for $W$-colored operads together with an algebra over them. One can then use the rectification machinery of \cite{PS18} to compare the fibers of the forgetful functors $\Alg_\Q(\M)\lrar \Alg_{\Op_W}(\M)$ and $\Alg_{\Q}(\M_\infty)\lrar \Alg_{\Op_W}(\M_\infty)$.
\end{rem}

\appendix

\section{Sifted homotopy colimits of algebras}
The purpose of this appendix is to prove the following result:
\begin{pro}\label{p:sifted}
Let $\M$ be a combinatorial SM model category and let $\P$ be a $\Sig$-cofibrant admissible $W$-colored operad in $\M$. Then the forgetful functor $U: \Alg_{\P}(\M)\lrar \M^{W}$ preserves sifted homotopy colimits.
\end{pro}

A close variant of Proposition~\ref{p:sifted} appears in~\cite[Proposition 7.9]{PS18}. 
We emphasize that by sifted homotopy colimits we mean homotopy colimits of diagrams indexed by a category whose nerve is sifted as an $\infty$-category (see~\cite[\S 5.5.8]{Lur09}). We shall refer to such categories as \textbf{homotopy sifted}. This condition can also be phrased as saying that the diagonal map $\I \lrar \I \times \I$ is cofinal (in the $\infty$-categorical sense).
A typical example of a homotopy sifted category is a category which admits finite coproducts. In fact, to prove Proposition~\ref{p:sifted} it will be convenient to first reduce to this special case.
 
For a small category $\I$, let us denote by $\I^{\coprod}$ the category obtained from $\I$ by freely adding finite coproducts. Explicitly, we may identify $\I^{\coprod}$ with the full subcategory of $\Fun(\I^{\op},\Set)$ spanned by those presheaves which are finite coproducts of representables. There is a canonical fully-faithful inclusion $\I \hrar \I^{\coprod}$ given by the Yoneda embedding. We note that $\I^{\coprod}$ is always sifted since it admits finite coproducts. In addition, we have the following observation:
\begin{lem}\label{l:cofinal}
If $\I$ is homotopy sifted then $\I \hrar \I^{\coprod}$ is cofinal (in the $\infty$-categorical sense).
\end{lem}
\begin{proof}
We need to show that if $\F = R_{i_1} \coprod ... \coprod R_{i_n}: \I^{\op} \lrar \Set$ is a coproduct of representables then the comma category $\I_{\F/}$ is weakly contractible. We note that the projection $\I_{\F/} \lrar \I$ is a left fibration classified by the Cartesian product of functors $h_{i_1} \times ... \times h_{i_n}: \I \lrar \Set$, where $h_{i_k}$ denotes the functor \textbf{copresented} by $i_k$. 
Recall that the classifying space of the domain of a left fibration 
is a model for the homotopy colimit of the associated functor; this follows immediately from the Bousfield-Kan formula for homotopy colimits. Since $\I$ is homotopy sifted, this homotopy colimit commutes with finite products (\cite[Lemma 5.5.8.11]{Lur09}). We may hence conclude that
$$ 
|\I_{\F/}| \simeq \hocolim_\I\Big(\prod_k h_{i_k}\Big) \simeq \prod_k \hocolim_\I h_{i_k} \simeq \ast 
$$
as desired. 
\end{proof}

\begin{proof}[Proof of Proposition~\ref{p:sifted}]
Let $\I$ be a homotopy sifted category. We wish to show that the forgetful functor $U:\Alg_\P \lrar \M^W$ preserves $\I$-indexed homotopy colimits. Since $\I \hrar \I^{\coprod}$ is fully-faithful, every $\I$-diagram is isomorphic to a diagram restricted from $\I^{\coprod}$. By Lemma~\ref{l:cofinal} the map $\I \hrar \I^{\coprod}$ is cofinal, and hence restriction along $\I^{\triangleright} \hrar (\I^{\coprod})^{\triangleright}$ preserves and detects homotopy colimit diagrams. To show that $U$ preserves $\I$-indexed homotopy colimits it will hence suffice to show that $U$ preserves $\I^{\coprod}$-indexed homotopy colimits. In other words, we may assume without loss of generality that $\I$ has finite coproducts. 

When $\I$ has finite coproducts the projective model structure on $\M^{\I}$ is \textbf{monoidal} with respect to levelwise tensor product. Indeed, since weak equivalences are levelwise and $\M$ is a monoidal model category it suffices to check that projective cofibrations satisfy the pushout-product axiom. This can be checked on generating projective cofibrations, which are of the form $h_i \otimes f$ for some generating cofibration $f$ in $\M$. One can then check that $(h_i \otimes f) \Box (h_j \otimes g) = h_{i\coprod j} \otimes (f \Box g)$ is indeed a projective cofibration.

We now proceed as in the proof of~\cite[Proposition 7.9]{PS18}. Identifying the underlying categories $(\Alg_\P)^{\I} \simeq \Alg_\P(\M^\I)$ we see that the projective model structure on the former coincides with the one transferred from $\M^\I_{\proj}$ on the latter. Since $\P$ is $\Sig$-cofibrant, \cite[Proposition 6.1.5]{WY18} implies that $U$ preserves projectively cofibrant $\I$-diagrams. Since $U$ preserves strict $\I$-colimits (being the forgetful functor from a category of algebras) we may now conclude that $U$ preserves $\I$-indexed homotopy colimits, as desired.
\end{proof}


\begin{thebibliography}{********}

\bibitem[BM05]{BM05} 
M.~Basterra and M.~A.~Mandell, \emph{Homology and cohomology of $E_{\infty}$ ring spectra}, Mathematische Zeitschrift 249(4), pp. 903--944 (2005).


\bibitem[Bec67]{Bec67}
J.~Beck, \emph{Triples, algebras and cohomology}, Ph.D. thesis, Columbia University, 1967, Reprints in Theory and Applications of Categories, 2, 2003, p. 1--59.


\bibitem[BM09]{BM09}
C.~Berger, I.~Moerdijk. \emph{On the derived category of an algebra over an operad}, Georgian Mathematical Journal, 16(1), p. 13--28 (2009).


\bibitem[Cav14]{Cav14}
G.~Caviglia, \emph{A model structure for enriched coloured operads}, preprint arXiv:1401.6983, 2014.


\bibitem[CH17]{CH17}
H.~Chu, R.~Haugseng, \emph{Enriched $\infty$-operads}, preprint arXiv:1707.08049, 2017.


\bibitem[CHH16]{CHH16}
H.~Chu, R.~Haugseng, G.~Heuts, \emph{Two models for the homotopy theory of $\infty$-operads}, preprint arXiv:1606.03826, 2016.


\bibitem[FHM03]{FHM03}
H.~Fausk, P.~Hu, and J.P.~May, \emph{Isomorphisms between left and right adjoints}, Theory and Applications of Categories, 11.4, 2003, p. 107--131.


\bibitem[Fre09]{fresse}
B.~Fresse, \emph{Modules over operads and functors}, Springer, 2009.

\bibitem[HNP16a]{part1}
Y.~Harpaz, J.~Nuiten, M.~Prasma, \emph{The tangent bundle of a model category}, preprint.

\bibitem[HNP16b]{HNP16} 
Y.~Harpaz, J.~Nuiten, M.~Prasma, \emph{The abstract cotangent complex and Quillen cohomology of enriched categories}, to appear in Journal of Topology.


\bibitem[HNP18]{HNP18}
Y.~Harpaz, J.~Nuiten, M.~Prasma, \emph{Quillen cohomology of $(\infty,2)$-categories}, preprint, arXiv:1802.08046, 2018.


\bibitem[HP15]{HP} 
Y.~Harpaz and M.~Prasma, \emph{The Grothendieck construction for model categories}, Advances in Mathematics, 2015.


\bibitem[Hel97]{Hel}
A.~Heller, \emph{Stable homotopy theories and stabilization}, Journal of Pure and Applied Algebra, 115.2, 1997, p. 113--130.


\bibitem[HHM15]{HHM15}
G.~Heuts, V.~Hinich, I.~Moerdijk, \emph{On the equivalence between Lurie's model and the dendroidal model for infinity-operads}, preprint arXiv:1305.3658, 2013.


\bibitem[Hin13]{Hin} 
V.~Hinich, \emph{Dwyer-Kan localization revisited}, preprint arXiv:1311.4128 (2013).


\bibitem[Hin15]{Hin15}
V.~Hinich, \emph{Rectification of algebras and modules}, Documenta Mathematica 20, 2015, p. 879--926.


\bibitem[Hov01]{Hov}
M.~Hovey, \emph{Spectra and symmetric spectra in general model categories}, Journal of Pure and Applied Algebra, 165.1, 2001, p. 63--127.


\bibitem[Joy]{Joy}
A.~Joyal, The Theory of Quasi-Categories and its Applications.


\bibitem[Lur06]{Lur06} 
J.~Lurie, \emph{Stable infinity categories}, arXiv preprint math/0608228 (2006).


\bibitem[Lur09]{Lur09} 
J.~Lurie, \emph{Higher topos theory}, No. 170. Princeton University Press, (2009).


\bibitem[Lur14]{Lur14} 
J.~Lurie, {Higher Algebra}, preprint, available at \href{http://www.math.harvard.edu/~lurie/papers/higheralgebra.pdf}{Author's Homepage} (2011).


\bibitem[Mac13]{Mac13}
S.~MacLane, \emph{Categories for the working mathematician}, Vol. 5. Springer Science \& Business Media, 2013.


\bibitem[MG16]{MG16}
A.~Mazel-Gee, Quillen adjunctions induce adjunctions of quasicategories, New York Journal of Mathematics, 22, 2016, p. 57--93.


\bibitem[NS15]{NS}
T.~Nikolaus, S.~Sagave, \emph{Presentably symmetric monoidal infinity-categories are represented by symmetric monoidal model categories}, preprint arXiv:1506.01475, 2015.


\bibitem[PS18]{PS18}
D.~Pavlov, J.~Scholbach, \emph{Admissibility and rectification of colored symmetric operads}, Journal of Topology, 11, 2018, p. 559--601.


\bibitem[Qui70]{Qui70}
D.~Quillen, \emph{On the (co-)homology of commutative rings}, Proc. Symp. Pure Math. Vol. 17. No. 2. 1970.


\bibitem[Rez02]{rezk}
C.~Rezk, \emph{Every homotopy theory of simplicial algebras admits a proper model}, Topology and its Applications, 119.1, 2002, p. 65--94.


\bibitem[Sch97]{Sch97}
S.~Schwede, \emph{Spectra in model categories and applications to the algebraic cotangent complex}, Journal of Pure and Applied Algebra, 120.1, 1997, p. 77--104.


\bibitem[Spi01]{Spi01}
M.~Spitzweck, \emph{Operads, algebras and modules in general model categories}, arXiv preprint math/0101102, 2001.


\bibitem[WY18]{WY18}
D.~White, D.~Yau, \emph{Bousfield localization and algebras over colored operads}, Applied Categorical Structures, 26.1, 2018, p. 153--203.


\end{thebibliography}
\end{document}